\def\eod{\vrule height 6pt width 5pt depth 0pt}
\newenvironment{proof}{\noindent {\bf Proof:} \hspace{.2em}}
{\hspace*{\fill}{\eod}}
\newcommand{\ceil}[1]{\left\lceil #1 \right\rceil}
\newcommand{\floor}[1]{\left\lfloor #1 \right\rfloor}
\newcommand{\exc}{\mathrm{exc}}
\newcommand{\etal}{\textit{et al.}}
\newcommand{\EXC}{\mathrm{EXC}}
\newcommand{\des}{\mathrm{des}}
\newcommand{\asc}{\mathrm{asc}}
\newcommand{\DES}{\mathrm{DES}}
\newcommand{\inv}{\mathrm{inv}}
\newcommand{\BerB}{\mathscr{B}}
\newcommand{\stat}{\mathrm{stat}}
\newcommand{\SSS}{\mathfrak{S}}
\newcommand{\SgnBDes}{\mathrm{SgnBDes}}
\newcommand{\BB}{\mathfrak{B}}
\newcommand{\DD}{\mathfrak{D}}
\newcommand{\SDD}{\mathrm{SgnD}}
\newcommand{\AAA}{\mathcal{A}}
\newcommand{\RR}{\mathbb{R}}
\newcommand{\SB}{\mathrm{SgnB}}
\newcommand{\AAE}{ \mathrm{AE}}
\newcommand{\SgnAAE}{ \mathrm{SgnAE}}
\newcommand{\BE}{ \mathrm{BExc}}
\newcommand{\BDes}{ \mathrm{BDes}}
\newcommand{\BExc}{ \mathrm{BExc}}
\newcommand{\sgnBE}{ \mathrm{SgnBExc}}
\newcommand{\sgnDE}{ \mathrm{SgnDExc}}
\newcommand{\Sm}{ \mathrm{Sum}}
\newcommand{\DE}{ \mathrm{DExc}}
\newcommand{\Negs}{\mathrm{negs}}
\newcommand{\pos}{ \mathrm{pos}}
\newtheorem{theorem}{Theorem}
\newtheorem{corollary}[theorem]{Corollary}
\newtheorem{remark}[theorem]{Remark}
\newtheorem{lemma}[theorem]{Lemma}
\newcommand{\ZZ}{ \mathbb{Z}}
\newcommand{\comment}[1]{}
\newcommand{\ob}[1]{\overline{#1}}
\begin{document}
\title{Eulerian Central Limit Theorems and Carlitz identities 
in positive elements of Classical Weyl Groups}

\author{Hiranya Kishore Dey\\
Department of Mathematics\\
Indian Institute of Technology, Bombay\\
Mumbai 400 076, India.\\
email: hkdey@math.iitb.ac.in
\and
Sivaramakrishnan Sivasubramanian\\
Department of Mathematics\\
Indian Institute of Technology, Bombay\\
Mumbai 400 076, India.\\
email: krishnan@math.iitb.ac.in
}

\maketitle

\section{Introduction}
\label{sec:intro}
For a positive integer $n$, let $[n] = \{1,2,\ldots,n\}$ and let 
$\SSS_n$ denote the symmetric group on $[n]$.  
Let $\stat: \SSS_n \mapsto \ZZ_{\geq 0}$ be a statistic on $\SSS_n$.
Let $p_{n,k} = | \{\pi \in \SSS_n: \stat(\pi) = k \} |$.
If we sample permutations uniformly at random from $\SSS_n$, we get  
the random variable $X_{\stat}$ which takes the non negative 
integral value $k$ 
with probability $p_{n,k}/|\SSS_n|$.  Sometimes, we will sample
from a subset $S \subseteq \SSS_n$, in which case the probability
that $X_{\stat}=k$ will have to be suitably modified.  

Let $p(t) = \sum_k p_k t^k$ be a polynomial with non negative 
coefficients and let $p(1) > 0$.  We will use the straightforward 
bijection between such polynomials and non negative random variables 
$X$ which take the value $k$ with probability $p_k/p(1)$.

Given a statistic $\stat$ and subsets $T_n \subseteq \SSS_n$, 
consider the sequence of non negative integers 
$s_{n,k} = |\{ \pi \in T_n: \stat(\pi) = k \} |$.  Let
$M_n = \max_{\pi \in T_n} \stat(\pi)$ be the maximum value that
$\stat(\pi)$ takes over $\pi \in T_n$.   Suppose 
$\Sm_n = \sum_{k=0}^{M_n} s_{n,k}$ is such that $\Sm_n > 0$.  
The array $\{ s_{n,k}: n \geq 1, 0 \leq k \leq M_n \}$,
is said to satisfy a {\it Central Limit
Theorem} (CLT henceforth) with mean $\mu_n$ and variance 
$\sigma_n^2$ if 
\begin{equation}
\label{eqn:defn_clt}
\lim_{n \to \infty} \sup_{x \in \RR} \Bigg \rvert 
\sum_{k \leq \floor{(x)_n}} \frac{s_{n,k}}{\Sm_n} - \Phi(x) \Bigg \rvert = 0,
\end{equation}
where $(x)_n = x\sigma_n + \mu_n$ and 
$\displaystyle \Phi(x) = \frac{1}{\sqrt{2 \pi}} \int_{-\infty}^x e^{-t^2/2} dt$ is 
the cumulative distribution function of the standard 
normal distribution $N(0,1)$.  When this happens, we also say 
that the statistic $\stat$ is {\it asymptotically normal} over the
set $S$. Canfield's article 
\cite{canfield-asymptotic_normality_enumeration} is a good reference
for background on CLT's in enumerative combinatorics.

Several {\it Eulerian statistics} in $\SSS_n$ are known.  In this
paper, we focus on the two most basic such statistics: the number 
of descents, $\des$ and the number of excedances, $\exc$.  
For 
$\pi = \pi_1,\pi_2,\ldots,\pi_n \in \SSS_n$, define 
$\DES(\pi) = \{ i \in [n-1]: \pi_i > \pi_{i+1} \}$ and
$\EXC(\pi) = \{ i \in [n-1]: \pi_i > i\}$. Let 
$\des(\pi) = |\DES(\pi)|$ and $\exc(\pi) = |\EXC(\pi)|$.  
Define $A_{n,k} = | \{ \pi \in \SSS_n: \des(\pi) = k \}|$ and 
let $\AAE_{n,k} = | \{ \pi \in \SSS_n: \exc(\pi) = k \}|$.  As 
we will need the generating function later, we define 

\begin{equation}
\label{eqn:Eulerian_poly_Sn_defn}
A_n(t) = \sum_{k=0}^{n-1}A_{n,k} t^k \hspace{.32 cm} \mbox{ and } \hspace{.32 cm}
\AAE_n(t) = \sum_{k=0}^{n-1}\AAE_{n,k} t^k.
\end{equation}

David and Barton in 
\cite[Pages 150-154]{david-barton-combin-chance} showed that
the random variable $X_{\des}$ is asymptotically normal over 
$\SSS_n$. See the papers 
\cite[Example 3.5]{bender-central-local-limit} by Bender and
\cite{kahle-stump} by Kahle and Stump as well.

\begin{theorem}[David and Barton]
\label{thm:clt-perm}
For positive integers $n$, the random variable $X_{\des}$ 
over $\SSS_n$ is asymptotically normal with mean 
$(n-1)/2$ and variance $(n+1)/12$.
\end{theorem}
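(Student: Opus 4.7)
The plan is to express $\des$ as a sum of $1$-dependent bounded indicators and apply a central limit theorem for $m$-dependent sequences. For $\pi$ uniform in $\SSS_n$, set $X_i = \mathbf{1}[\pi_i > \pi_{i+1}]$ for $1 \leq i \leq n-1$, so that $\des(\pi) = \sum_{i=1}^{n-1} X_i$. The key structural observation is that $\{X_i\}$ is $1$-dependent: when $j - i \geq 2$, the position sets $\{i,i+1\}$ and $\{j,j+1\}$ are disjoint, and the symmetry of the uniform measure on $\SSS_n$ forces $P(X_i = X_j = 1) = 1/4 = P(X_i=1)P(X_j=1)$, so $X_i$ and $X_j$ are independent. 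The same symmetry gives $E[X_i] = 1/2$ and hence the claimed mean $E[\des] = (n-1)/2$.

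I would then compute the variance via the only nontrivial covariances. Since each of the $3!$ relative orderings of $\pi_i,\pi_{i+1},\pi_{i+2}$ is equally likely, $E[X_i X_{i+1}] = P(\pi_i > \pi_{i+1} > \pi_{i+2}) = 1/6$, so $\mathrm{Cov}(X_i,X_{i+1}) = 1/6 - 1/4 = -1/12$. All remaining covariances vanish by $1$-dependence, giving
\[
\Var(\des) = (n-1) \cdot \frac{1}{4} + 2(n-2) \cdot \left(-\frac{1}{12}\right) = \frac{n+1}{12},
\]
which matches the claimed $\sigma_n^2$.

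To close, I would invoke the Hoeffding--Robbins CLT for $1$-dependent bounded sequences in its triangular-array form: the $X_i$ are uniformly bounded by $1$, are strictly stationary in joint distribution for each fixed $n$, and $\sigma_n^2 = (n+1)/12 \to \infty$, so $(\des(\pi) - \mu_n)/\sigma_n \to N(0,1)$ in distribution. Since $\Phi$ is continuous, this convergence is automatically uniform in $x$, which is precisely the statement (\ref{eqn:defn_clt}). The hard part will be justifying the triangular-array $m$-dependent CLT in this setting, because the permutation-based stationarity holds only within each fixed $n$, so a Lindeberg-type condition must be checked uniformly as $n\to\infty$. A cleaner alternative avoiding this issue is Frobenius's classical theorem that $A_n(t)$ has only real, negative, simple roots: this factorizes $A_n(t)$ into linear pieces and realizes $X_{\des}$ as a sum of independent Bernoullis, after which Lindeberg--Feller applies directly, in the spirit of Bender's criterion \cite[Example 3.5]{bender-central-local-limit}.
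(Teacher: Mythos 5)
The paper does not prove this statement at all: it is imported verbatim from David and Barton (with pointers to Bender and to Kahle--Stump), so your proposal cannot be compared against an in-paper argument. Judged on its own, your plan is sound and both routes you describe are standard and correct: the moment computations ($E[X_i]=1/2$, $E[X_iX_{i+1}]=1/6$, hence mean $(n-1)/2$ and variance $(n-1)/4-2(n-2)/12=(n+1)/12$) are right, and the real-rootedness alternative (Frobenius plus Lindeberg--Feller for a sum of independent Bernoullis with $\sigma_n^2\to\infty$) is exactly the mechanism behind the citation to \cite{bender-central-local-limit} and is the cleanest way to finish.

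Two points in the $1$-dependence route need tightening. First, what you actually verify is only \emph{pairwise} independence of $X_i$ and $X_j$ for $j-i\ge 2$; the Hoeffding--Robbins theorem requires the stronger block statement that $(X_1,\dots,X_i)$ and $(X_{i+2},\dots,X_{n-1})$ are independent as random vectors, which does not follow from the $P(X_i=X_j=1)=1/4$ computation. Second, the triangular-array worry you raise at the end is a real obstacle only because you have tied the $X_i$ to a permutation of fixed length $n$. Both issues dissolve simultaneously if you realize the descent process via i.i.d.\ uniforms: let $U_1,U_2,\dots$ be i.i.d.\ uniform on $[0,1]$ and set $X_i=\mathbf{1}[U_i>U_{i+1}]$; since ties occur with probability zero, the ranks of $U_1,\dots,U_n$ form a uniform element of $\SSS_n$ and $\des$ has the law of $\sum_{i=1}^{n-1}X_i$. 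Now each $X_i$ is a function of $(U_i,U_{i+1})$ alone, so blocks of indices separated by a gap of at least one depend on disjoint sets of independent variables --- genuine $1$-dependence --- and the whole family sits inside a single infinite stationary bounded $1$-dependent sequence, so the classical (non-triangular) Hoeffding--Robbins CLT applies directly. With that repair, together with P\'olya's theorem upgrading convergence in distribution to the uniform convergence demanded by \eqref{eqn:defn_clt}, your argument is complete.
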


Let $\AAA_n \subseteq \SSS_n$ denote the subset of positive
elements in $\SSS_n$.  We alternatively denote $\AAA_n$ as $\SSS_n^+$
and $\SSS_n - \AAA_n$ as $\SSS_n^-$.  
Let $A_{n,k}^+ = |\{ \pi \in \SSS_n^+: \des(\pi) = k \}|$ and 
$A_{n,k}^- = |\{ \pi \in \SSS_n^-: \des(\pi) = k \}|$.  
Define
\begin{equation}
\label{eqn:Eulerian_poly_An-pm_defn}
A_n^+(t) = \sum_{k=0}^{n-1} A_{n,k}^+t^k 
\hspace{0.32 cm} \mbox{ and } \hspace{0.32 cm}
A_n^-(t) = \sum_{k=0}^{n-1} A_{n,k}^-t^k. 
\end{equation}

Fulman, Kim, Lee and Petersen recently in 
\cite[Theorem 1.2]{fulman-kim-lee-petersen_joint-distrib-descents-sign} 
showed that the random variable $X_{\des}$ over $\SSS_n^{\pm}$
has a CLT.

\begin{theorem}[Fulman, Kim, Lee and Petersen]
\label{thm:fulman-kim-lee-petersen}
The distribution of the coefficients of $A_n^{\pm}(t)$ is asymptotically
normal as $n \to \infty$.  For $n \geq 4$, these numbers have 
mean $(n-1)/2$ and for $n \geq 6$, these numbers have variance $(n+1)/12$.
\end{theorem}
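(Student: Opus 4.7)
The plan is to exploit the decomposition
\begin{equation*}
A_n^{\pm}(t) = \tfrac{1}{2}\bigl[ A_n(t) \pm A_n^{\sgn}(t) \bigr],
\quad\text{where}\quad
A_n^{\sgn}(t) := \sum_{\pi \in \SSS_n} \sgn(\pi) \, t^{\des(\pi)}
\end{equation*}
is the signed Eulerian polynomial, combined with Theorem~\ref{thm:clt-perm} and the classical D\'esarm\'enien--Foata identity, which gives an explicit product factorization of $A_n^{\sgn}(t)$. Two features of the factorization drive the argument: (a) $A_n^{\sgn}(t)$ is divisible by $(1-t)^2$ for $n \geq 4$ and by $(1-t)^3$ for $n \geq 6$; (b) the $\ell^1$-norm of its coefficients is $O\bigl(2^n \lfloor n/2\rfloor!\bigr) = o(n!)$.

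For the mean and variance, note that $A_n^{\sgn}(1) = 0$ for $n \geq 2$, so $A_n^{\pm}(1) = n!/2$. Property (a) forces $A_n^{\sgn\prime}(1) = 0$ for $n \geq 4$ and $A_n^{\sgn\prime\prime}(1) = 0$ for $n \geq 6$. Substituting into the standard formulas $\mu = p'(1)/p(1)$ and $\sigma^2 = p''(1)/p(1) + \mu - \mu^2$ then reduces the first two moments of $A_n^{\pm}$ to those of $A_n$ at the respective thresholds, giving $\mu_n^{\pm} = (n-1)/2$ and $(\sigma_n^{\pm})^2 = (n+1)/12$.

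For asymptotic normality I would use characteristic functions. Standardizing $X_{\des}$ on $\SSS_n^{\pm}$ via $\mu_n = (n-1)/2$, $\sigma_n^2 = (n+1)/12$, its characteristic function equals
\begin{equation*}
\phi_n^{\pm}(\tau) \;=\; e^{-i\tau \mu_n/\sigma_n} \cdot \frac{A_n(e^{i\tau/\sigma_n}) \pm A_n^{\sgn}(e^{i\tau/\sigma_n})}{n!}.
\end{equation*}
The unsigned piece tends to $e^{-\tau^2/2}$ pointwise by Theorem~\ref{thm:clt-perm}. Property (b) gives $\bigl|A_n^{\sgn}(e^{i\theta})\bigr|/n! = O\bigl((4/n)^{n/2}\bigr) \to 0$ uniformly in $\theta$, so the signed correction is negligible and $\phi_n^{\pm}(\tau) \to e^{-\tau^2/2}$. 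L\'evy's continuity theorem then yields weak convergence to $N(0,1)$, and continuity of $\Phi$ upgrades this to the uniform CDF convergence demanded by \eqref{eqn:defn_clt}.

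The main obstacle is the sharp divisibility claim (a), which is precisely what pins the thresholds to $n \geq 4$ and $n \geq 6$ in the theorem. Establishing it requires tracking the $(1-t)$-adic valuation of the D\'esarm\'enien--Foata factorization carefully, rather than appealing to any soft symmetry argument; once (a) and the easy bound (b) are in hand, the rest of the proof is a packaging of David--Barton plus a characteristic-function perturbation.
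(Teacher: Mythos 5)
Your proposal is correct, and it proves the theorem by a route that is genuinely different in its final step from the one this paper (following the second proof in Fulman--Kim--Lee--Petersen, which the paper only cites) is built around. You share the essential decomposition $A_n^{\pm}(t)=\tfrac12\bigl(A_n(t)\pm\sum_{\pi\in\SSS_n}\sgn(\pi)t^{\des(\pi)}\bigr)$ and the key input that the signed Eulerian polynomial is $(1-t)^{\floor{n/2}}$ times a polynomial not vanishing at $t=1$ (D\'esarm\'enien--Foata), which is exactly what pins the thresholds $n\geq 4$ and $n\geq 6$; note that your ``main obstacle'' (a) is actually immediate from that factorization, since $A_m(1)=m!\neq 0$ makes the $(1-t)$-adic valuation exactly $\floor{n/2}$. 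Where you diverge is in deducing normality: the paper's template (Lemma \ref{lem:connection_between_all_and_plus} plus the method of moments, as used in Theorems \ref{thm:clt_exc_plus_minus}, \ref{thm:typeb-clt_exc_plus_minus}, etc.) only needs the divisibility statement --- the first $\floor{n/2}-1$ moments of the restricted and unrestricted statistics coincide, and moment convergence for the unsigned statistic finishes the job --- whereas your characteristic-function perturbation additionally needs the $\ell^1$ bound (b) to show $|A_n^{\sgn}(e^{i\theta})|/n!\to 0$ uniformly, but in exchange avoids invoking determinacy of the normal law by its moments and moment convergence in the David--Barton CLT. Both are sound; your normalization $A_n^{\pm}(1)=n!/2$ is handled correctly, and the passage from weak convergence to uniform convergence of the distribution functions via continuity of $\Phi$ (P\'olya) is legitimate.
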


It is a well known result of MacMahon \cite{macmahon-book} 
that excedances and descents are equidistributed over $\SSS_n$.  
Thus, Theorem \ref{thm:clt-perm} works when the random variable 
$X_{\des}$ is replaced by the random variable $X_{\exc}$.
Though excedances and descents are equidistributed over $\SSS_n$,
it is easy to check when $n \geq 3$, that they are 
{\it not equidistributed} over $\AAA_n$.  Hence, a version of 
Theorem \ref{thm:fulman-kim-lee-petersen} for the 
random variable $X_{\exc}$ does not follow immediately. 
Thus, a natural question is about the existence of a 
CLT for $X_{\exc}$ on the set $\AAA_n$.
In this work, we give such a CLT.  
Let $\AAE_{n,k}^+ = |\{ \pi \in \SSS_n^+: \exc(\pi) = k \}|$ and 
$\AAE_{n,k}^- = |\{ \pi \in \SSS_n^-: \exc(\pi) = k \}|$.  Define

\begin{equation}
\label{eqn:Eulerian_poly_exc_An-pm_defn}
\AAE_n^+(t) = \sum_{k=0}^{n-1} \AAE_{n,k}^+t^k
\hspace{0.32 cm} \mbox{ and } \hspace{0.32 cm}
\AAE_n^-(t) = \sum_{k=0}^{n-1} \AAE_{n,k}^-t^k.
\end{equation}
One of our main results, proved in subsection
\ref{subsec:main_lemma_typea} of this paper is the following.

\begin{theorem}
\label{thm:clt_exc_plus_minus}
The distribution of the coefficients of $\AAE_n^{\pm}(t)$ is asymptotically
normal as $n \to \infty$.  For $n \geq 3$, these numbers have 
mean $(n-1)/2$ and for $n \geq 4$, these numbers have variance $(n+1)/12$.
\end{theorem}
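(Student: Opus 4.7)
The plan is to reduce the CLT for $\AAE_n^\pm(t)$ to the classical CLT on $\SSS_n$ by exploiting the signed excedance polynomial. Define
$$\SE_n(t) := \sum_{\pi \in \SSS_n} \sgn(\pi)\,t^{\exc(\pi)} = \AAE_n^+(t) - \AAE_n^-(t).$$
The key first step is to establish the closed form
$$\SE_n(t) = (1-t)^{n-1},$$
which I would prove either by a sign-reversing involution on $\SSS_n$ that preserves $\exc$ and flips $\sgn$ outside a small residual set whose weighted contribution telescopes to $(1-t)^{n-1}$, or algebraically by extracting the coefficient from the cycle-structure exponential generating function, using that a cycle of length $\ell$ contributes sign $(-1)^{\ell-1}$. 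Combining this with $\AAE_n^+(t) + \AAE_n^-(t) = \AAE_n(t)$ yields the master identity
$$\AAE_n^\pm(t) = \frac{\AAE_n(t) \pm (1-t)^{n-1}}{2}.$$

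Given this master identity, the mean and variance claims fall out by direct differentiation. Since $(1-t)^{n-1}$ and its first $n-2$ derivatives vanish at $t=1$, one obtains $\AAE_n^\pm(1) = n!/2$ for $n \geq 2$, then $(\AAE_n^\pm)'(1) = \AAE_n'(1)/2$ for $n \geq 3$, and $(\AAE_n^\pm)''(1) = \AAE_n''(1)/2$ for $n \geq 4$. Hence in the stated ranges both the mean and the second factorial moment of $X_{\exc}$ on $\SSS_n^\pm$ coincide with those on $\SSS_n$, delivering the values $(n-1)/2$ and $(n+1)/12$ via MacMahon's equidistribution and the classical descent moments.

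For the asymptotic normality, the master identity translates directly into probabilities:
$$\frac{\AAE_{n,k}^\pm}{n!/2} = \frac{\AAE_{n,k}}{n!} \pm \frac{(-1)^k \binom{n-1}{k}}{n!}.$$
Summing over $k \leq \floor{(x)_n}$ with $\mu_n = (n-1)/2$ and $\sigma_n^2 = (n+1)/12$, the first term converges uniformly in $x$ to $\Phi(x)$ by Theorem \ref{thm:clt-perm} and MacMahon, while the error term is bounded uniformly in $x$ by $2^{n-1}/n!$, which decays super-exponentially. This verifies \eqref{eqn:defn_clt} for $\AAE_n^\pm$.

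The main obstacle is establishing the closed form $\SE_n(t) = (1-t)^{n-1}$; once this is in hand, the remaining argument treats $\AAE_n^\pm(t)$ as a small perturbation of $\AAE_n(t)/2$ and exploits the rapid decay of $2^{n-1}/n!$ against the growth of $n!$. A clean involution respecting the excedance statistic is subtler than the analogous descent argument of Fulman--Kim--Lee--Petersen (since the natural swap involutions tend not to preserve $\exc$), so the most economical route may well be the algebraic one through the sign-Eulerian generating function.
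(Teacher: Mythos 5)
Your proposal is correct and rests on the same master identity as the paper: the signed excedance polynomial $\sum_{\pi\in\SSS_n}\sgn(\pi)t^{\exc(\pi)}=(1-t)^{n-1}$, which is exactly Mantaci's theorem (Theorem \ref{thm:mantaci}); the paper simply cites it, so your two proposed derivations (involution or cycle-structure EGF) are extra work rather than a gap, and both routes are viable. Where you genuinely diverge is in how normality is extracted from the identity $\AAE_n^{\pm}(t)=\tfrac{1}{2}\bigl(\AAE_n(t)\pm(1-t)^{n-1}\bigr)$. The paper feeds this into Lemma \ref{lem:connection_between_all_and_plus} to match the first $n-2$ moments of $X_{\exc}$ on $\SSS_n^{\pm}$ with those on $\SSS_n$ and then invokes the method of moments. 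You instead compare cumulative distribution functions directly: the pointwise perturbation $\pm(-1)^k\binom{n-1}{k}/n!$ sums to an error of at most $2^{n-1}/n!$ uniformly in $x$, which verifies \eqref{eqn:defn_clt} outright. Your route is more elementary and quantitative --- it gives an explicit super-exponentially small Kolmogorov-distance bound and sidesteps the moment-determinacy step entirely --- while the paper's moment lemma is the reusable engine for its type B and D analogues (Theorems \ref{thm:typeb-clt_exc_plus_minus}, \ref{thm:typed-clt_des_plus_minus}, \ref{thm:typed-clt_exc_plus_minus}), where it is applied verbatim to the other perturbing polynomials $(1-t)^n$ and $(1+t)(1-t)^{n-1}$. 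Your mean and variance computations, including the thresholds $n\geq 3$ and $n\geq 4$ coming from the order of vanishing of $(1-t)^{n-1}$ at $t=1$, agree with the paper's.
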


$\SSS_n$ is a Coxeter group and so are two other families 
$\BB_n$ and $\DD_n$.   The book \cite{bjorner-brenti} by 
Bj{\"o}rner and Brenti is a good reference for the combinatorics
of these groups.
Hence in both $\BB_n$ and $\DD_n$, there is a natural 
notion of descent.
We denote the descent statistic in type B and type D Coxeter 
groups as $\des_B$ and $\des_D$ 
(see definitions in Sections 
\ref{sec:typeb} and \ref{sec:typed}) respectively.

Define 
$B_{n,k} = | \{ \pi \in \BB_n: \des_B(\pi) = k \}|$
and 
$D_{n,k} = | \{ \pi \in \DD_n: \des_D(\pi) = k \}|$.
Define 

\begin{equation}
\label{eqn:Eulerian_poly_B_and_Dn_defn}
B_n(t) = \sum_{k=0}^n B_{n,k} t^k 
\hspace{0.32 cm} \mbox{ and } \hspace{0.32 cm}
D_n(t) = \sum_{k=0}^n D_{n,k} t^k.
\end{equation}

Kahle and Stump in \cite{kahle-stump}, recently computed the 
first two moments and gave a CLT for the appropriate 
$X_{\des}$ over $\BB_n$ and $\DD_n$.

\begin{theorem}[Kahle and Stump]
\label{thm:clt-kahle-stump}
For positive integers $n$, the random variable $X_{\des_B}$ over $\BB_n$ is 
asymptotically normal with mean $n/2$ and variance $(n+1)/12$.
For positive integers $n$, the random variable $X_{\des_D}$ over $\DD_n$ is 
asymptotically normal with mean $n/2$ and variance $(n+2)/12$.
\end{theorem}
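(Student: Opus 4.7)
The plan is to prove both halves of Theorem \ref{thm:clt-kahle-stump} through the standard real-rootedness route, often attributed to Harper and developed in the form we need by Bender \cite{bender-central-local-limit} and Canfield \cite{canfield-asymptotic_normality_enumeration}: if a polynomial $p_n(t)$ with nonnegative coefficients has only real nonpositive roots, then $p_n(t)/p_n(1)$ is the probability generating function of a sum of \emph{independent} Bernoulli random variables, and by the Lindeberg--Feller CLT the normalized distribution converges to $N(0,1)$ as soon as the variance tends to infinity. So the proof splits into three tasks for each of $B_n(t)$ and $D_n(t)$: (i) verify real-rootedness; (ii) compute the mean exactly; (iii) compute the variance exactly, and observe that it grows linearly in $n$.

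For type B I would appeal to Brenti's classical result that $B_n(t)$ has only real roots in $[-1,0]$. For the moments, the cleanest route is the Worpitzky-type identity
\begin{equation*}
\sum_{k \geq 0} (2k+1)^n \, t^k \;=\; \frac{B_n(t)}{(1-t)^{n+1}},
\end{equation*}
from which $B_n(1) = 2^n n!$ and both $B_n'(1)/B_n(1)$ and $B_n''(1)/B_n(1)$ can be extracted by expanding $(2k+1)^n = ((1-t) \cdot \text{operator})^n$ applied to $1/(1-t)$ at $t=1$, or equivalently by differentiating the identity and taking limits. This should deliver $\mathbb{E}[X_{\des_B}] = B_n'(1)/B_n(1) = n/2$ and $\Var(X_{\des_B}) = (n+1)/12$, at which point Bender's theorem closes the argument.

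For type D I would first relate $D_n(t)$ to $B_n(t)$ via the identity $D_n(t) = B_n(t) - n \cdot 2^{n-1} \, t \, B_{n-1}(t)$ (a standard signed-permutation calculation: a type D element is a signed permutation with an even number of negative entries, and the correction term accounts for removing elements with exactly one negative entry, appropriately weighted). Real-rootedness of $D_n(t)$ is the classical result of Savage--Visontai / Brenti; granting it, the same Bender--Harper machine applies. The moment computation can either be done directly from the identity above (differentiating and substituting $t=1$, using the already-known moments of $B_n$ and $B_{n-1}$), yielding the mean $n/2$ and the slightly larger variance $(n+2)/12$, or by a parallel Worpitzky-style identity for $D_n(t)$.

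The main obstacle is the real-rootedness of $D_n(t)$: this is considerably more delicate than the $B_n$ case because the subtraction in $D_n(t) = B_n(t) - n2^{n-1} t B_{n-1}(t)$ destroys the manifest positivity needed for easy proofs (e.g., via interlacing or Brenti's multiplier techniques), so one must argue with some care, typically by establishing a compatible recursion of polynomials whose roots interlace. Once real-rootedness is in hand, the rest reduces to the moment computations sketched above and a routine invocation of Lindeberg--Feller to verify that $\sigma_n^2 \asymp n \to \infty$ is enough.
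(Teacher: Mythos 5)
The paper does not actually prove Theorem~\ref{thm:clt-kahle-stump}; it imports it from \cite{kahle-stump}, so there is no internal proof to compare against. Your overall strategy --- real-rootedness of the Eulerian polynomial, hence a representation as a sum of independent Bernoulli variables, exact first and second moments from a Worpitzky-type identity, and Lindeberg--Feller once $\sigma_n^2\to\infty$ --- is the standard route and is essentially how Kahle and Stump argue, so the architecture is sound. The type B half is fine as sketched.

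There is, however, a concrete error in the type D half: the identity you invoke, $D_n(t) = B_n(t) - n\,2^{n-1}\, t\, B_{n-1}(t)$, is false. The correct relation (Brenti, Stembridge; recorded in this paper as Lemma~\ref{lem:brenti_typed_relation}) is
\begin{equation*}
B_n(t) \;=\; D_n(t) + n\,2^{n-1}\, t\, A_{n-1}(t),
\end{equation*}
with the \emph{type A} Eulerian polynomial $A_{n-1}(t)$, not $B_{n-1}(t)$. Your version fails already at $t=1$: it would require $B_n(1)-D_n(1) = n\,2^{n-1}B_{n-1}(1) = 4^{n-1}n!$, whereas the true difference is $2^n n! - 2^{n-1}n! = 2^{n-1}n! = n\,2^{n-1}A_{n-1}(1)$. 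For $n=2$ your identity gives $D_2(t) = 1+2t-3t^2$, which has a negative coefficient (the correct value is $1+2t+t^2$). Since your mean and variance for type D are to be extracted by differentiating this identity, the error propagates and the computation as described would not yield $n/2$ and $(n+2)/12$. The fix is simply to use the correct identity (or its Worpitzky form, Theorem~\ref{thm:carlitz-identity_typeD}); with that substitution, together with the Savage--Visontai real-rootedness of $D_n(t)$ that you correctly flag as the delicate ingredient, your argument goes through.
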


Coxeter groups also have a natural notion of length which 
in $\SSS_n$, $\BB_n$ and $\DD_n$ will be  denoted as 
$\ell$, $\ell_B$ and $\ell_D$ respectively.  
In $\SSS_n$, it is easy to see that 
$ \AAA_n = \SSS_n^+ 
= \{ \pi \in \SSS_n: \ell(\pi) \equiv 0 \> ( \!\!\!\! \mod 2) \}$. 
Motivated by this, define 
$\BB_n^+ = \{ \pi \in \BB_n: \ell_B(\pi) \equiv 
0 \> (\!\!\!\! \mod 2) \}$ and $\DD_n^+ = \{ \pi \in \DD_n:
\ell_D(\pi) \equiv 0 \> ( \!\!\!\! \mod 2) \}$.
Similarly define 
$\BB_n^- = \{ \pi \in \BB_n: \ell_B(\pi) \equiv 
1 \> (\!\!\!\! \mod 2) \}$ and $\DD_n^- = \{ \pi \in \DD_n: 
\ell_D(\pi) \equiv 1 \> (\!\!\!\! \mod 2) \}$.  We will define
$(\BB_n-\DD_n)^{\pm}$ as well.  For $\pi \in \BB_n - \DD_n$, 
we choose $\inv_D(\pi)$ as the exponent of $-1$ to define
sign.  Had we chosen $\inv_B(\pi)$ for this purpose, 
Remark \ref{rem:invb_invd_ddn} makes it clear that the sets 
$(\BB_n - \DD_n)^{\pm}$ will be the same, but have their
names swapped.

\vspace{2 mm}

Let $B_{n,k}^+ = |\{ \pi \in \BB_n^+: \des_B(\pi) = k \}|$ and 
$B_{n,k}^- = |\{ \pi \in \BB_n^-: \des_B(\pi) = k \}|$.  
Define
\begin{equation}
\label{eqn:Eulerian_poly_Bn-pm_defn}
B_n^+(t) = \sum_{k=0}^n B_{n,k}^+t^k 
\hspace{0.32 cm} \mbox{ and } \hspace{0.32 cm}
B_n^-(t) = \sum_{k=0}^n B_{n,k}^-t^k. 
\end{equation}

For both these groups, CLT results for the random variable 
$X_{\des_B}$ (and $X_{\des_D}$) on the sets $\BB_n^{\pm}$ 
(and $\DD_n^{\pm}$) are not known to the 
best of our knowledge.   In Theorems 
\ref{thm:typeb-clt_exc_plus_minus} and
\ref{thm:typed-clt_exc_plus_minus},
we give such results.  Thus, these can be considered as
type B and type D counterparts of 
Theorem \ref{thm:fulman-kim-lee-petersen}
and are proved in Sections \ref{sec:typeb} and 
\ref{sec:typed} respectively.

In \cite{boroweic-mlotkowski-new-eulerian-type-d}, Borowiec 
and M\l{}otkowski enumerated the statistic $\des_B$ over 
$\DD_n$ and called the generating function a {\sl new type D Eulerian 
polynomial}. 
See Section \ref{sec:boroweic_mlot_refine} for definitions.  
From this polynomial, one can get CLT 
results (see Remark \ref{rem:clt_boroweic_follows}) for 
this variant.  
Our first contribution in this paper on this theme is Theorem 
\ref{thm:main_theorem_signed_desB_enumerate_over_type_d},
a signed enumeration result, where we enumerate $\des_B$ over $\DD_n$
(and over $\BB_n - \DD_n$) with sign taken into account.  
Using this result, in 
Theorem \ref{thm:typed-clt_bdes_plus_minus}, we give
a CLT when one considers the random variable $X_{\des_B}$ over 
$\DD_n^{\pm}$ (and over $(\BB_n -\DD_n)^{\pm}$).  
These results  are presented in Section \ref{sec:boroweic_mlot_refine}.
We summarize known CLT results and the new ones in the 
table below.  
In the table, we give the statistic rather than the random variable 
as if one writes the statistic as a subscript, it appears in a smaller 
font and is hence more difficult to read.

\vspace{3 mm}

\begin{tabu}{c|l|l}
\hline
Set& Descents & Excedance 
\\ \tabucline[2pt]{-}
$\SSS_n$ & For $\des$ see David and Barton \cite{david-barton-combin-chance}.
& For $\exc$, a CLT follows \\
& & from \cite{david-barton-combin-chance} and MacMahon \cite{macmahon-book}. \\ \hline
$\SSS_n^{\pm}$ & For $\des$ see Fulman $\etal$ \cite{fulman-kim-lee-petersen_joint-distrib-descents-sign}. 
& For $\exc$ see Theorem \ref{thm:clt_exc_plus_minus}.
\\ \tabucline[2pt]{-}
$\BB_n$ & For $\des_B$, see Kahle and Stump \cite{kahle-stump}. & For 
$\exc_B$, result  follows \\
& & from \cite{kahle-stump} and Brenti \cite{brenti-q-eulerian-94}.
\\  \hline
$\BB_n^{\pm}$ & For $\des_B$ see 
Theorem \ref{thm:typeb-clt_exc_plus_minus}.
 & For $\exc_B$, see
Theorem \ref{thm:typeb-clt_exc_plus_minus}.
\\ \tabucline[2pt]{-}
$\DD_n$ & For $\des_D$, see Kahle and Stump \cite{kahle-stump}. & 
For $\exc_D$, see Remark \ref{rem:typed-exc-done}. \\ \hline 
$\DD_n^{\pm}$ & For $\des_D$, see Theorem \ref{thm:typed-clt_des_plus_minus}.
& For $\exc_D$, see Theorem \ref{thm:typed-clt_exc_plus_minus}.
\\ \tabucline[2pt]{-}
$\DD_n$ & For $\des_B$, see Remark \ref{rem:clt_boroweic_follows}. 
&  For $\exc_B$, see Remark \ref{rem:boro_mlot-exc_not_new}.  \\ \hline
$\DD_n^{\pm}$ & 
For $\des_B$, see Theorem 
\ref{thm:typed-clt_bdes_plus_minus}. & 
For $\exc_B$, see Remark \ref{rem:boro_mlot-exc_not_new}. 
\\ 
\end{tabu}

\subsection{Carlitz type identities}
The following famous powerseries identity involving the Eulerian polynomial 
is attributed to Carlitz, though MacMahon's book 
\cite[Vol 2, Chap IV, pp 211]{macmahon-book} contains
this.  See \cite[Corollary 1.1]{petersen-eulerian-nos-book} 
in the book by Petersen as well.

\begin{theorem}[Carlitz]
\label{thm:carlitz-identity}
Let $A_n(t)$ be the Eulerian polynomial as defined in 
\eqref{eqn:Eulerian_poly_Sn_defn}.  For positive integers $n$,
\begin{equation}
\label{carlitz-original}
\frac{A_n(t)}{(1-t)^{n+1}} = \sum_{k \geq 0} (k+1)^nt^k.
\end{equation}
\end{theorem}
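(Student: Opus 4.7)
The plan is to prove the identity by formal power series expansion and coefficient matching, which reduces to the classical Worpitzky identity for Eulerian numbers. First I would expand $(1-t)^{-(n+1)} = \sum_{k \geq 0} \binom{n+k}{n} t^k$ via the negative binomial series and multiply by $A_n(t) = \sum_{j=0}^{n-1} A_{n,j} t^j$. Collecting terms, the coefficient of $t^k$ on the left-hand side of \eqref{carlitz-original} becomes $c_{n,k} = \sum_{j=0}^{\min(n-1,k)} A_{n,j} \binom{n+k-j}{n}$, so the identity is equivalent to showing $c_{n,k} = (k+1)^n$ for every $k \geq 0$.

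Invoking the palindromicity $A_{n,j} = A_{n,n-1-j}$ of the Eulerian polynomial and substituting $j \mapsto n-1-j$ in the sum transforms the target into Worpitzky's identity
\begin{equation*}
x^n = \sum_{j=0}^{n-1} A_{n,j}\binom{x+j}{n},
\end{equation*}
specialized at $x = k+1$. The task therefore reduces to proving Worpitzky's identity.

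For Worpitzky I would use the standard bijective argument. The left-hand side $x^n$ counts functions $f : [n] \to [x]$. To each such $f$, I associate the unique permutation $\pi \in \SSS_n$ that sorts $f$ into weakly increasing order $f(\pi(1)) \leq \cdots \leq f(\pi(n))$, with ties broken by the rule $\pi(i) < \pi(i+1)$ whenever $f(\pi(i)) = f(\pi(i+1))$. The sorted sequence $a_i = f(\pi(i))$ is then weakly increasing, with strict jumps forced at each $i \in \DES(\pi)$. A stars-and-bars count shows that the number of weakly increasing sequences in $[1,x]^n$ with strict jumps at a fixed set of $j$ positions equals $\binom{x+n-1-j}{n}$; summing this over all $\pi$ with $\des(\pi)=j$ and applying palindromicity once more yields the form with $\binom{x+j}{n}$.

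The main obstacle will be getting the bookkeeping in the bijection exactly right: choosing the tie-breaking rule consistently, verifying that the inverse map $(\pi,\mathbf{a}) \mapsto f$ with $f(\pi(i)) = a_i$ actually recovers the same $\pi$ (which fails unless the forced strict-jump condition at descents is satisfied), and keeping track of the two uses of Eulerian palindromicity needed to land on the form $\binom{x+j}{n}$ rather than $\binom{x+n-1-j}{n}$. Everything before the Worpitzky step is routine formal power series manipulation.
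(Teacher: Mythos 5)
Your proposal is mathematically sound, but note that the paper does not prove Theorem \ref{thm:carlitz-identity} at all: it is quoted as a classical result and attributed to Carlitz/MacMahon, with a pointer to Petersen's book. So there is no in-paper argument to compare against; your write-up supplies a self-contained proof where the paper supplies a citation. The reduction is correct: expanding $(1-t)^{-(n+1)}=\sum_{k\ge 0}\binom{n+k}{n}t^k$ and matching coefficients does reduce \eqref{carlitz-original} to Worpitzky's identity at $x=k+1$, and the sort-the-function bijection with ties broken by index is the standard proof of Worpitzky. One stylistic inefficiency worth flagging: you invoke palindromicity $A_{n,j}=A_{n,n-1-j}$ twice, and the two applications exactly cancel. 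Your bijection directly yields $x^n=\sum_j A_{n,j}\binom{x+n-1-j}{n}$, and substituting $x=k+1$ gives $\sum_j A_{n,j}\binom{n+k-j}{n}$, which is already the coefficient $c_{n,k}$ you computed --- so you can skip palindromicity entirely and avoid the bookkeeping you list as the main obstacle. Otherwise the argument is complete and correct.
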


Fulman, Kim, Lee and Petersen in 
\cite[Theorem 1.1]{fulman-kim-lee-petersen_joint-distrib-descents-sign} 
gave the following refinement  of Theorem \ref{thm:carlitz-identity}
involving $A_n^{\pm}(t)$.

\begin{theorem}[Fulman, Kim, Lee and Petersen]
\label{thm:carlitz-identity_des_typeA}
Let $A_n^{\pm}(t)$ be the restricted versions of the descent based 
Eulerian polynomial defined in
\eqref{eqn:Eulerian_poly_An-pm_defn}.  For positive integers $n$,
\begin{equation}
\label{eqn:des_based_pm_carlitz_An}
\frac{A_n^{\pm}(t)}{(1-t)^{n+1}} = \sum_{k \geq 0}
\left( \frac{ (k+1)^n \pm (k+1)^{\ceil{n/2}} }{2}\right) t^k.
\end{equation}
\end{theorem}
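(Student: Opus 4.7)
The plan is to split the $\pm$ identity into its additive and subtractive parts. Summing the $+$ and $-$ versions of \eqref{eqn:des_based_pm_carlitz_An} recovers Carlitz's identity \eqref{carlitz-original} and needs no further work, since $A_n^+(t)+A_n^-(t)=A_n(t)$. Hence the only new content is the ``difference'' identity
\[
\frac{A_n^+(t)-A_n^-(t)}{(1-t)^{n+1}} \;=\; \sum_{k\ge 0}(k+1)^{\ceil{n/2}}\, t^k.
\]
Applying Theorem \ref{thm:carlitz-identity} with $n$ replaced by $\ceil{n/2}$, the right side equals $A_{\ceil{n/2}}(t)/(1-t)^{\ceil{n/2}+1}$, and since $(n+1)-(\ceil{n/2}+1)=\floor{n/2}$, this collapses to the polynomial statement
\[
A_n^+(t)-A_n^-(t) \;=\; A_{\ceil{n/2}}(t)\,(1-t)^{\floor{n/2}}.
\]
Recognizing $A_n^+(t)-A_n^-(t)=\sum_{\pi\in\SSS_n}\sgn(\pi)\,t^{\des(\pi)}$ as the signed Eulerian polynomial, everything reduces to establishing this closed form.

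My first attempt would be a sign-reversing involution on $\SSS_n$. Group the positions into the $\floor{n/2}$ adjacent pairs $(1,2),(3,4),\ldots$; for each $\pi$, scan left to right for the first pair in which a designated local modification (a swap chosen so as not to touch $\DES(\pi)$) both flips $\sgn(\pi)$ and preserves $\des(\pi)$, and apply that modification to obtain $\phi(\pi)$. The contributions of $\pi$ and $\phi(\pi)$ then cancel in the signed sum, so only the fixed points survive. The goal is to show that these fixed points encode exactly the data of a permutation in $\SSS_{\ceil{n/2}}$ together with $\floor{n/2}$ independent binary sign choices at the ``locked'' pairs, producing the product $A_{\ceil{n/2}}(t)(1-t)^{\floor{n/2}}$ on the right.

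The main obstacle is the design of the local move: an arbitrary swap inside a pair typically shifts $\des$ by $\pm 1$ through its interaction with the entries immediately outside the pair. One therefore has to choose the swap rule according to the relative order of the four entries surrounding the pair, and verify case by case that the sign reverses while the descent count is unchanged. If this combinatorial design proves awkward, an analytic fallback is available: derive a recursion for both sides of the target identity by inserting the value $n$ at each of the $n$ possible positions (tracking the sign change $(-1)^{n-i}$ and the known effect on $\des$), and verify by induction that both sides satisfy the same recursion. Either route yields the closed form, from which the theorem follows at once via Theorem \ref{thm:carlitz-identity}.
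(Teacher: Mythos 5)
Your reduction is sound and is exactly the mechanism this paper uses for its analogous identities: write $A_n^{\pm}(t)=\tfrac12\bigl(A_n(t)\pm\widetilde{A}_n(t)\bigr)$ with $\widetilde{A}_n(t)=\sum_{\pi\in\SSS_n}\sgn(\pi)t^{\des(\pi)}$, divide by $(1-t)^{n+1}$, and apply Carlitz to each piece. (Note the paper does not actually prove Theorem \ref{thm:carlitz-identity_des_typeA}; it imports it from Fulman--Kim--Lee--Petersen. But its proofs of Theorems \ref{thm:carlitz-identity_exc_typeA} and \ref{thm:carlitz-identity_pm_des_typeB} follow precisely your two-step template, with the signed enumeration results of Mantaci and Sivasubramanian playing the role your closed form plays here.) Your arithmetic $n+1-\floor{n/2}=\ceil{n/2}+1$ is correct, so the theorem is indeed equivalent to $\widetilde{A}_n(t)=A_{\ceil{n/2}}(t)(1-t)^{\floor{n/2}}$.

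The gap is that this last identity is the entire content of the theorem and you do not prove it. It is a genuine classical result (Loday's conjecture on signed Eulerian numbers, proved by D\'esarm\'enien--Foata via recurrences and by Wachs via an involution), not a routine computation, and your involution sketch stops exactly where the work begins: an arbitrary swap inside an adjacent pair $(2i-1,2i)$ generically changes $\des$ at position $2i-1$ and at the boundaries $2i-2$ and $2i$, and the rule for choosing a descent-preserving, sign-reversing move must be designed by cases on the relative order of the four surrounding entries --- you name this obstacle but do not resolve it, nor do you verify that the fixed points contribute $A_{\ceil{n/2}}(t)(1-t)^{\floor{n/2}}$ rather than merely being indexed by $\SSS_{\ceil{n/2}}\times\{\pm1\}^{\floor{n/2}}$ (one must also check that $\des$ and $\sgn$ on a fixed point decompose as the corresponding statistics on the half-size permutation plus independent contributions from the locked pairs). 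The recursion fallback is likewise only asserted. So the architecture is right and matches the paper's methodology, but the proposal as written establishes nothing beyond what Carlitz's identity already gives; to complete it you must either carry out the involution in full (essentially reproving Wachs's theorem) or cite the signed Eulerian polynomial evaluation as known.
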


In Section \ref{sec:powerseriesidentities} of
this paper, we prove the following similar but different identity 
involving the polynomial $\AAE_n^{\pm}(t)$.

\begin{theorem}
\label{thm:carlitz-identity_exc_typeA}
Let $\AAE_n^{\pm}(t)$ be the restricted versions of 
the excedance based Eulerian polynomial defined in
\eqref{eqn:Eulerian_poly_exc_An-pm_defn}.  For positive integers $n$,
\begin{equation}
\label{eqn:exc_based_pm_carlitz_An}
\frac{\AAE_n^{\pm}(t)}{(1-t)^{n+1}} = \sum_{k \geq 0}
\left( \frac{ (k+1)^n \pm (k+1)}{2} \right)t^k.
\end{equation}
\end{theorem}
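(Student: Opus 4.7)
The plan is to derive both identities of Theorem \ref{thm:carlitz-identity_exc_typeA} simultaneously by separately evaluating the sum $\AAE_n^+(t) + \AAE_n^-(t)$ and the difference $\AAE_n^+(t) - \AAE_n^-(t)$, each divided by $(1-t)^{n+1}$, and then averaging. For the sum, I would invoke MacMahon's equidistribution of $\des$ and $\exc$ on $\SSS_n$, which yields $\AAE_n(t) = A_n(t) = \AAE_n^+(t) + \AAE_n^-(t)$, so that Theorem \ref{thm:carlitz-identity} gives at once
$$\frac{\AAE_n^+(t) + \AAE_n^-(t)}{(1-t)^{n+1}} = \sum_{k \geq 0} (k+1)^n t^k.$$

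The heart of the argument is the evaluation of the signed excedance polynomial
$$E_n(t) := \AAE_n^+(t) - \AAE_n^-(t) = \sum_{\pi \in \SSS_n} \sgn(\pi)\, t^{\exc(\pi)},$$
for which the claim is $E_n(t) = (1-t)^{n-1}$. My preferred route is a determinant evaluation: let $X$ be the $n \times n$ matrix with $X_{ij} = 1$ if $j \leq i$ and $X_{ij} = t$ if $j > i$. Then $\prod_i X_{i,\pi(i)} = t^{\exc(\pi)}$, so the Leibniz formula gives $E_n(t) = \det(X)$. Performing the row operations $R_i \mapsto R_i - R_{i+1}$ for $i = 1, \ldots, n-1$ transforms $X$ into a matrix whose first $n-1$ rows each have a single nonzero entry $t-1$ on the superdiagonal and whose last row is $(1, 1, \ldots, 1)$. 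Cofactor expansion along the first column then yields $\det(X) = (-1)^{n+1}(t-1)^{n-1} = (1-t)^{n-1}$. A purely combinatorial alternative would be to insert $n$ into permutations of $\SSS_{n-1}$ (as a fixed point or inside an existing cycle), carefully track the induced changes in $\sgn$ and $\exc$, and verify that the resulting recurrence is solved by $(1-t)^{n-1}$.

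Once the signed evaluation is in hand, dividing $E_n(t) = (1-t)^{n-1}$ by $(1-t)^{n+1}$ produces
$$\frac{\AAE_n^+(t) - \AAE_n^-(t)}{(1-t)^{n+1}} = \frac{1}{(1-t)^2} = \sum_{k \geq 0} (k+1)\, t^k,$$
and adding this to, or subtracting it from, the earlier identity for the sum, then halving, gives exactly the claimed
$$\frac{\AAE_n^{\pm}(t)}{(1-t)^{n+1}} = \sum_{k \geq 0} \frac{(k+1)^n \pm (k+1)}{2}\, t^k.$$
The main obstacle is the middle step, namely the evaluation of $E_n(t)$; everything else is formal manipulation resting on Carlitz's identity and MacMahon's equidistribution of $\des$ and $\exc$.
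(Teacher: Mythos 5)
Your proposal is correct and follows essentially the same route as the paper: writing $\AAE_n^{\pm}(t) = \tfrac{1}{2}\big(\AAE_n(t) \pm \SgnAAE_n(t)\big)$, handling the unsigned part via MacMahon's equidistribution and Carlitz's identity, and the signed part via $\SgnAAE_n(t) = (1-t)^{n-1}$. The only difference is that you re-derive this last evaluation by the determinant computation (which is the proof from \cite{siva-exc-det} that the paper mentions), whereas the paper simply cites it as Mantaci's theorem; your derivation of it is correct.
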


Theorem \ref{thm:carlitz-identity_des_typeA} and 
Theorem \ref{thm:carlitz-identity_exc_typeA} are refinements
of Theorem \ref{thm:carlitz-identity} as 
summing up the two equations in 
\eqref{eqn:des_based_pm_carlitz_An} and
\eqref{eqn:exc_based_pm_carlitz_An} gives us 
\eqref{carlitz-original}.  
Here too, we consider type B and D variants.  In Theorems 
\ref{thm:carlitz-identity_pm_des_typeB} and
\ref{thm:carlitz-identity_pm_des_typeD}
we give type B and 
type D counterparts of Theorem \ref{thm:carlitz-identity_des_typeA}.
Brenti proved a type D Carlitz identity by giving a 
recurrence relation (see 
\cite[Corollary 4.8]{brenti-q-eulerian-94})
between the polynomials $B_n(t), D_n(t)$ and $A_{n-1}(t)$.
This recurrence was also proved by Stembridge in 
\cite[Lemma 9.1]{stembridge-perm-rep-weyl-grp-cohomology-toric-variety}.
Our proof of Theorem \ref{thm:carlitz-identity_pm_des_typeD}
refines this recurrence by giving two recurrences (see Lemma
\ref{lem:stembridge-refine}). 
Borowiec and M\l{}otkowski in 
\cite[Proposition 4.6]{boroweic-mlotkowski-new-eulerian-type-d},
also gave a Carlitz type identity involving their new type D 
Eulerian polynomial. 
In Theorem \ref{thm:carlitz-identity_pm_boroweic_typeD},
we refine their result by giving signed versions.
We prove all these results in Section \ref{sec:powerseriesidentities}.

\section{A lemma and the type A result}
\label{sec:our_results}

We start with the following lemma, which is implicit in the 
second proof of
\cite[Theorem 1.2]{fulman-kim-lee-petersen_joint-distrib-descents-sign}.
For positive integers $n$, let 
$F_n(t)= \sum_{k=0}^n f_{n,k}t^k$ and $G_n(t)=\sum_{k=0}^n g_{n,k}t^k$ 
be sequences of polynomials with $f_{n,k}, g_{n,k} \geq 0$ for all $k$.
Further, for all positive integers $n$, let $F_n(1) > 0$ and $G_n(1) > 0$.
We will consider the random variable $X_f^n$ (and $X_g^n$) which takes 
the value $k$ with probability $ \displaystyle \frac{f_{n,k}}{F_n(1)}$ 
\Big(and
$\displaystyle \frac{g_{n,k}}{G_n(1)}$ respectively\Big).  The following lemma 
gives a condition for moments of $X_f^n$ and $X_g^n$ to be identical.

\begin{lemma}
\label{lem:connection_between_all_and_plus}
Let $F_n(t)$ and $G_n(t)$ be as described above.  Suppose we have 
a sequence of polynomials $H_n(t)$, a non-zero real number $\lambda$ and a 
sequence of positive integers $\ell_n$ such that 
\begin{equation}
\label{eqn:diff_by_1-t-power}
F_n(t)= \lambda G_n(t) \pm (1-t)^{\ell_n} H_n(t).
\end{equation}

Then, for $r<\ell_n$, the $r$-th moment of $X_f^n$
is identical to the $r$-th moment of $X_g^n$.
\end{lemma}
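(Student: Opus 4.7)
The plan is to extract moments from the polynomial identity by applying a suitable differential operator and evaluating at $t=1$, exploiting the fact that $(1-t)^{\ell_n} H_n(t)$ has a zero of order $\ell_n$ at $t=1$ and therefore its first $\ell_n - 1$ derivatives vanish there.

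First I would recall the standard translation between generating polynomials and moments: for any polynomial $P(t) = \sum_k p_k t^k$ with $p_k \geq 0$ and $P(1) > 0$, the $r$-th moment of the associated random variable equals $\bigl((t \tfrac{d}{dt})^r P\bigr)\big|_{t=1} \big/ P(1)$, because $(t \tfrac{d}{dt})^r t^k = k^r t^k$. So the $r$-th moment of $X_f^n$ is $\sum_k k^r f_{n,k} / F_n(1)$, and similarly for $X_g^n$.

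Next I would plug $t=1$ directly into \eqref{eqn:diff_by_1-t-power}. Since $\ell_n \geq 1$, the term $(1-t)^{\ell_n} H_n(t)$ vanishes, giving the normalization identity $F_n(1) = \lambda G_n(1)$; in particular $\lambda$ has the same sign as the positive quantity $F_n(1)/G_n(1)$. Then I would apply the operator $(t \tfrac{d}{dt})^r$ to both sides of \eqref{eqn:diff_by_1-t-power} and evaluate at $t=1$. The operator $(t \tfrac{d}{dt})^r$ expands as a $\ZZ$-linear combination of $t^j \tfrac{d^j}{dt^j}$ for $0 \leq j \leq r$ (Stirling-type expansion), so it suffices to check that each $\tfrac{d^j}{dt^j}\bigl[(1-t)^{\ell_n} H_n(t)\bigr]\big|_{t=1}$ vanishes for $j \leq r < \ell_n$. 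This is immediate from the Leibniz rule: each term contains a factor $\tfrac{d^i}{dt^i}(1-t)^{\ell_n}\big|_{t=1}$ with $i \leq j < \ell_n$, which is $0$ since $(1-t)^{\ell_n}$ has a zero of order $\ell_n$ at $t=1$.

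Combining these two computations yields $\sum_k k^r f_{n,k} = \lambda \sum_k k^r g_{n,k}$ for every $r < \ell_n$, and dividing by $F_n(1) = \lambda G_n(1)$ (which is legitimate as $\lambda \neq 0$ and $G_n(1) > 0$) gives the equality of $r$-th moments. There is no real obstacle here; the only thing to be careful about is that the hypothesis $\ell_n \geq 1$ and the equality of zeroth moments $F_n(1) = \lambda G_n(1)$ are used together to cancel the factor $\lambda$, so the statement genuinely requires $\lambda$ to be the same constant governing both the polynomial identity and the normalization.
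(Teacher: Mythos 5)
Your proof is correct and takes essentially the same approach as the paper: differentiate the identity \eqref{eqn:diff_by_1-t-power} and evaluate at $t=1$, using that $(1-t)^{\ell_n}H_n(t)$ has a zero of order $\ell_n$ there. The only (immaterial) difference is that you extract ordinary moments directly via the operator $(t\,\tfrac{d}{dt})^r$, whereas the paper computes factorial moments via $\tfrac{d^r}{dt^r}$ and then invokes the standard equivalence between matching factorial moments and matching moments.
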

\begin{proof}
We work with the $r$-th factorial moment instead. The main idea is to 
differentiate the relevant generating function $r$ times and 
then set $t=1$.   We first assume $\ell_n \geq 2$ and 
show that the first moments of $X_f^n$ and $X_g^n$ are equal.
Differentiating \eqref{eqn:diff_by_1-t-power}, we get 
\begin{equation}
\label{eqn:differentating1time}
\frac{dF_n(t)}{dt}  =  \lambda \frac{dG_n(t)}{dt}
 \pm \left(  (1-t)^{\ell_n} \frac{dH_n(t)}{dt} + \ell_n(1-t)^{\ell_n-1}H_n(t) \right).
\end{equation}

Setting $t=1$ in \eqref{eqn:differentating1time}, we get  
$\displaystyle \frac{dF_n(t)}{dt}\Bigg \rvert_{t=1}  =  
\lambda \frac{dG_n(t)}{dt}\Bigg \rvert_{t=1}.$  
That is $F'(1) = \lambda G'(1)$.  Here, we have used $\ell_n \geq 2$.
From \eqref{eqn:diff_by_1-t-power}, clearly,  $F(1) = \lambda G(1)$.
Since the first order factorial
moment of $X_f^n$ is $F'(1)/F(1)$, the first order factorial moment of 
$X_f^n$ is identical to the first order factorial moment of
$X_g^n$.
We continue 
differentiating upto $r$ times and then setting $t=1$.  For
$r < \ell_n$, we will have
$\displaystyle \frac{d^rF_n(t)}{dt^r} \Bigg \rvert_{t=1}  = 
\frac{d^rG_n(t)}{dt^r} \Bigg \rvert_{t=1}.$

\vspace{2 mm}

By a similar argument, when $r < \ell_n$, the $r$-th factorial 
moment of $X_f^n$  
is identical to  the $r$-th factorial moment of $X_g^n$.
The proof follows from the fact that the moments of two 
distributions are identical if and only if their factorial moments 
are identical. 
\end{proof}

\subsection{Proof of Theorem \ref{thm:clt_exc_plus_minus}}
\label{subsec:main_lemma_typea}

We will need the following signed excedance enumeration result 
in $\SSS_n$.   Let

\begin{equation}
\label{eqn:sgn_exc}
\SgnAAE_n(t) = \sum_{\pi \in \SSS_n} (-1)^{\inv(\pi)} t^{\exc(\pi)}.
\end{equation}

Mantaci (see \cite{mantaci-thesis, mantaci-jcta-93})
showed the following interesting result.  Sivasubramanian in 
\cite{siva-exc-det} gave an alternate proof of Mantaci's result by evaluating 
the determinant of appropriately defined $n \times n$ matrices.

\begin{theorem}[Mantaci]
\label{thm:mantaci}
For positive integers $n \geq 1$,  $\SgnAAE_n(t) = (1-t)^{n-1}$.
\end{theorem}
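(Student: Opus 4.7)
The plan is to recognize $\SgnAAE_n(t)$ as the determinant of an explicit $n \times n$ matrix and then evaluate that determinant by a short sequence of row operations. Let $M_n(t)$ be the $n \times n$ matrix defined by $(M_n(t))_{i,j} = t$ if $j > i$ and $(M_n(t))_{i,j} = 1$ if $j \leq i$. Since $(-1)^{\inv(\pi)}$ is exactly the sign of $\pi$ and since the entry $(i, \pi_i)$ contributes a factor of $t$ iff $\pi_i > i$ (i.e.\ iff $i$ is an excedance position of $\pi$), the Leibniz expansion yields
\[
\det M_n(t) = \sum_{\pi \in \SSS_n} (-1)^{\inv(\pi)} \prod_{i=1}^n (M_n(t))_{i,\pi_i} = \sum_{\pi \in \SSS_n} (-1)^{\inv(\pi)} t^{\exc(\pi)} = \SgnAAE_n(t).
\]

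Next, I would apply the row operations $R_i \leftarrow R_i - R_{i+1}$ for $i = 1, 2, \ldots, n-1$, which preserve the determinant. A direct check shows that, for $i \leq n-1$, the new row $R_i$ has a $0$ in every column except column $i+1$, where it equals $t-1$; meanwhile the last row $R_n$ remains $(1,1,\ldots,1)$. Indeed, in columns $j \leq i$ both original rows equal $1$, in columns $j > i+1$ both equal $t$, only column $i+1$ produces the difference $t-1$. The resulting matrix is thus very sparse and essentially triangular after one more manipulation.

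Now expand the determinant along the first column. Only row $n$ has a nonzero entry there, contributing the cofactor $(-1)^{n+1} \cdot 1 \cdot \det N$, where $N$ is the $(n-1) \times (n-1)$ minor obtained by deleting row $n$ and column $1$. The matrix $N$ is diagonal with every diagonal entry equal to $t-1$, so $\det N = (t-1)^{n-1}$. Combining, $\SgnAAE_n(t) = (-1)^{n+1}(t-1)^{n-1} = (1-t)^{n-1}$, as claimed.

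There is no real obstacle here — the only place to exercise a little care is tracking the index shift when reading off the support of the transformed rows, and absorbing the sign $(-1)^{n+1}$ together with $(t-1)^{n-1} = (-1)^{n-1}(1-t)^{n-1}$ at the end. The key conceptual step is the first one: packaging the signed excedance enumerator as the determinant of the matrix with a $1/t$ pattern split at the diagonal.
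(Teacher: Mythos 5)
Your proof is correct: the identification $\det M_n(t)=\SgnAAE_n(t)$ via the Leibniz expansion is valid (the entry $(i,\pi_i)$ contributes $t$ exactly when $i$ is an excedance and $(-1)^{\inv(\pi)}=\sgn(\pi)$), the row operations $R_i\leftarrow R_i-R_{i+1}$ performed for $i=1,\dots,n-1$ do use the original row $R_{i+1}$ at each step, and the cofactor expansion with the sign bookkeeping $(-1)^{n+1}(t-1)^{n-1}=(1-t)^{n-1}$ checks out. The paper does not reprove this theorem of Mantaci but explicitly cites Sivasubramanian's alternate proof ``by evaluating the determinant of appropriately defined $n\times n$ matrices,'' which is exactly the route you have taken, so your argument matches the determinantal proof referenced in the paper rather than Mantaci's original one.
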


\begin{proof}(Of Theorem \ref{thm:clt_exc_plus_minus})
For $n \geq 1$, we clearly have 
\begin{equation}
\label{eqn:factor_reln_typeA}
\AAE_n^{\pm}(t) = \frac{1}{2} \Big( \AAE_n(t) \pm (1-t)^{n-1}\Big).
\end{equation}
Thus, by Lemma \ref{lem:connection_between_all_and_plus}, the first
$n-2$ moments of $\AAE_n^{\pm}(t)$ are identical to the 
first $n-2$ moments of $\AAE_n(t)$.  Since $\AAE_n(t) = A_n(t)$ and 
$A_n(t)$ is asymptotically normal, by the method of moments, 
$\AAE_n^{\pm}(t)$ is also asymptotically normal.  Further, they
have the same expected value as $A_n(t)$ when $n \geq 3$ and the 
same variance as $A_n(t)$ when $n \geq 4$.  The proof is complete.
\end{proof}

\section{Type B Coxeter Groups}
\label{sec:typeb}

It is known that $\BB_n$ can be thought as the 
group of permutations $\pi$ 
of the set $[\pm n] = \{-n,-(n-1),\ldots,-1,1,2,\ldots,n \}$ which
satisfy $\pi(-i) = -\pi(i)$ for $1 \leq i \leq n$.  
See the 
book by Bj{\"o}rner and Brenti \cite[Chapter 8]{bjorner-brenti}.
Clearly,
we only need $\pi(i)$ for $1 \leq i \leq n$ to know $\pi \in \BB_n$.
We denote $-i$ alternatively as $\ob{i}$ as well.
For $\pi \in \BB_n$, let $\Negs(\pi) = |\{ i \in [n]: \pi(i) < 0\}|$
be the number of negative elements in the image of $\pi(i)$ for
$i \in [n]$.

For a positive integer $n$, define $[n]_0 = \{0,1,\ldots,n\}$.
For 
$\pi = \pi_1,\pi_2,\ldots,\pi_n \in \BB_n$, let $\pi_0 = 0$. 
Define 
$\DES_B(\pi) = \{ i \in [n-1]_0: \pi_i > \pi_{i+1} \}$ and let
$\des_B(\pi) = |\DES_B(\pi)|$.
Define
$B_{n,k} = | \{ \pi \in \BB_n: \des_B(\pi) = k \}|$.
Following Brenti's definition of excedance from \cite{brenti-q-eulerian-94}, 
let  $\EXC_B(\pi)= \{ i \in [n]: \pi_{|\pi(i)|} > \pi_ i\} \cup
\{ i \in [n]: \pi_i =- i\}$ and let 
$\exc_B(\pi) = |\EXC_B(\pi)|$.  
Define $\BE_{n,k} = | \{ \pi \in \BB_n: \exc_B(\pi) = k \}|$.  
Let

\begin{equation}
\label{eqn:Eulerian_poly_Bn_defn}
B_n(t) = \sum_{k=0}^nB_{n,k} t^k \hspace{0.32 cm} \mbox{ and } \hspace{0.32 cm}
\BE_n(t) = \sum_{k=0}^n\BE_{n,k} t^k.
\end{equation}

Let 
$B_{n,k}^+ = |\{ \pi \in \BB_n^+: \des_B(\pi) = k \} |$ and
$B_{n,k}^- = |\{ \pi \in \BB_n^-: \des_B(\pi) = k \} |$.  Define
$B_n^+(t) = \sum_{k=0}^n B_{n,k}^+t^k$ and 
$B_n^-(t) = \sum_{k=0}^n B_{n,k}^-t^k$.  
Let
$\BE_{n,k}^+ = |\{ \pi \in \BB_n^+: \exc_B(\pi) = k \}|$ and 
$\BE_{n,k}^- = |\{ \pi \in \BB_n^-: \exc_B(\pi) = k \}|$.  
Define
$\BE_n^+(t) = \sum_{k=0}^n \BE_{n,k}^+t^k$ and
$\BE_n^-(t) = \sum_{k=0}^n \BE_{n,k}^-t^k$.  
Lastly define 
$\SB_n(t) = \sum_{\pi \in \BB_n} (-1)^{\inv_B(\pi)} t^{\des_B(\pi)}$ and 
$\sgnBE_n(t) = \sum_{\pi \in \BB_n} (-1)^{\inv_B(\pi)} t^{\exc_B(\pi)}$.
Reiner in \cite{reiner-descents-weyl} and 
Sivasubramanian in \cite{siva-sgn_exc_hyp}
showed the following.

\begin{theorem}[Reiner]
\label{thm:reiner-sgn-b-des}
For positive integers $n$, $\SB_n(t) = (1-t)^n$.
\end{theorem}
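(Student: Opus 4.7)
My plan is to reduce the sum via the natural decomposition of a signed permutation into its underlying permutation and a sign vector. Write $\pi \in \BB_n$ as $\pi_i = \epsilon_i \sigma_i$ where $\sigma = |\pi| \in \SSS_n$ and $\epsilon = (\epsilon_1,\ldots,\epsilon_n) \in \{\pm 1\}^n$, so that $\Negs(\pi)$ equals the number of $i$ with $\epsilon_i = -1$. Since $\pi \mapsto (-1)^{\inv_B(\pi)}$ is the sign character of the Coxeter group $\BB_n$ and agrees with the determinant of the signed permutation matrix of $\pi$ (both send every simple reflection to $-1$), the easy factorization $M_\pi = P_{|\pi|} \cdot \mathrm{diag}(\epsilon_1,\ldots,\epsilon_n)$ gives $(-1)^{\inv_B(\pi)} = (-1)^{\Negs(\pi)} (-1)^{\inv(\sigma)}$. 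Substituting yields
\begin{equation*}
\SB_n(t) \;=\; \sum_{\sigma \in \SSS_n} (-1)^{\inv(\sigma)} \sum_{\epsilon \in \{\pm 1\}^n} (-1)^{\Negs(\pi)} \, t^{\des_B(\pi)},
\end{equation*}
and it suffices to show that the inner sum vanishes for every $\sigma \neq \id$ and equals $(1-t)^n$ for $\sigma = \id$.

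For the inner sum I would exploit the local dependence of $\des_B$ on $\epsilon$. A short four-case check on $(\epsilon_i, \epsilon_{i+1})$ shows that, for $i \in [n-1]$, the indicator $[\pi_i > \pi_{i+1}]$ equals $[\epsilon_{i+1} = -1]$ when $\sigma_i < \sigma_{i+1}$, and equals $[\epsilon_i = +1]$ when $\sigma_i > \sigma_{i+1}$; the boundary indicator $[\pi_0 > \pi_1]$ is simply $[\epsilon_1 = -1]$, which fits the ``ascent'' template if we formally set $\sigma_0 := 0$. Now fix $\sigma \neq \id$ and let $i^* \in [n-1]$ be the smallest index with $\sigma_{i^*} > \sigma_{i^* + 1}$; by minimality the position immediately before $i^*$ is an ascent of $\sigma$ (using the virtual $\sigma_0 = 0$ when $i^* = 1$). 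Define the involution on $\{\pm 1\}^n$ by flipping $\epsilon_{i^*}$. By the local table, the two descent indicators adjacent to position $i^*$ become $[\epsilon_{i^*} = -1]$ and $[\epsilon_{i^*} = +1]$, whose sum is $1$ regardless of $\epsilon_{i^*}$; thus $\des_B(\pi)$ is preserved, while $(-1)^{\Negs(\pi)}$ reverses. Hence the inner sum collapses pair by pair to $0$.

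For $\sigma = \id$, every $\pi = \epsilon \cdot \id$ is a signed identity, and a direct check shows $\des_B(\pi) = \Negs(\pi)$. Thus the inner sum becomes $\sum_\epsilon (-t)^{\Negs(\pi)} = (1-t)^n$, and since $\inv(\id) = 0$ this gives $\SB_n(t) = (1-t)^n$. The main obstacle I anticipate is the bookkeeping for the local dependence of $[\pi_i > \pi_{i+1}]$ on the sign pair $(\epsilon_i, \epsilon_{i+1})$ together with the boundary behaviour at position $0$; once that four-case table is in hand, both the involution and the fixed-point computation are essentially mechanical.
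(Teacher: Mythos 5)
Your argument is correct, and it takes a genuinely different route from the paper. The paper states Theorem \ref{thm:reiner-sgn-b-des} as a quoted result of Reiner; the closest it comes to an independent proof is deriving the bivariate version (Theorem \ref{thm:reiner}) as a corollary of Theorem \ref{thm:main_theorem_signed_desB_enumerate_over_type_d}, whose proof is a sign-reversing involution organized around the positions of $\pm n$ and $\pm(n-1)$, yielding the two-step recurrence $\SgnBDes^{D}_n(s,t)=(s-t)^2\SgnBDes^{D}_{n-2}(s,t)$ and its $\BB_n-\DD_n$ companion, which are then subtracted. Your involution instead fixes the underlying permutation $\sigma=|\pi|$ and flips the single sign $\epsilon_{i^*}$ at the first descent of $\sigma$; the four-case table for $[\pi_i>\pi_{i+1}]$, the identity $(-1)^{\inv_B(\pi)}=(-1)^{\Negs(\pi)}(-1)^{\inv(\sigma)}$ via the determinant of the signed permutation matrix, and the fixed-point computation $\des_B(\epsilon\cdot\id)=\Negs(\epsilon\cdot\id)$ all check out, so the inner sum over $\epsilon$ vanishes for $\sigma\neq\id$ and contributes $(1-t)^n$ for $\sigma=\id$. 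Your approach is shorter and more self-contained for the full sum over $\BB_n$; what it does \emph{not} give is the paper's refinement, because flipping one $\epsilon_{i^*}$ changes the parity of $\Negs(\pi)$ and therefore moves $\pi$ between $\DD_n$ and $\BB_n-\DD_n$, so your pairing does not restrict to either piece. The paper's heavier involution is built precisely to preserve membership in $\DD_n$ (and in $\BB_n-\DD_n$), which is what it needs for Theorem \ref{thm:main_theorem_signed_desB_enumerate_over_type_d} and the subsequent CLTs over $\DD_n^{\pm}$; Reiner's identity over all of $\BB_n$ then falls out as a by-product. If you only want Theorem \ref{thm:reiner-sgn-b-des} (or even its bivariate form, which your argument also yields since $\asc_B=n-\des_B$), your proof is a clean alternative.
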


\begin{theorem}[Sivasubramanian]
\label{thm:siva-sgn-b-exc}
For positive integers $n$, $\sgnBE_n(t) = (1-t)^n$.
\end{theorem}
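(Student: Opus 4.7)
The plan is to parametrize each $\pi \in \BB_n$ as a pair $(\sigma, \epsilon)$ with $\sigma = |\pi| \in \SSS_n$ and $\epsilon = (\epsilon_1, \ldots, \epsilon_n) \in \{\pm 1\}^n$, where $\pi(i) = \epsilon_i \sigma(i)$. Viewing $\pi$ as a signed permutation matrix one has $(-1)^{\inv_B(\pi)} = \sgn(\sigma)\prod_{i=1}^n \epsilon_i$. Substituting $\pi(i) = \epsilon_i \sigma(i)$ and $\pi(|\pi(i)|) = \epsilon_{\sigma(i)}\sigma(\sigma(i))$ into Brenti's condition that $i \in \EXC_B$ iff $\pi_{|\pi(i)|} > \pi_i$ or $\pi_i = -i$, one sees that whether $i \in \EXC_B$ is determined by $\sigma$ together with just the two sign variables $\epsilon_i$ and $\epsilon_{\sigma(i)}$.

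Since $\epsilon_i$ and $\epsilon_{\sigma(i)}$ always lie in the same cycle of $\sigma$ as $i$, both $\prod_i \epsilon_i$ and $t^{\exc_B(\pi)}$ factor across the cycles of $\sigma$, giving
\[
\sgnBE_n(t) \;=\; \sum_{\sigma \in \SSS_n} \sgn(\sigma) \prod_{C \text{ cycle of } \sigma} S_C(\sigma),
\quad
S_C(\sigma) = \sum_{\epsilon\rvert_C} \Bigl(\prod_{i \in C} \epsilon_i\Bigr)\, t^{\#\{i \in C \,:\, i \in \EXC_B\}}.
\]
I would then compute $S_C$ by cases. A direct case analysis of Brenti's condition shows: for a fixed point $C = \{c\}$, $c \in \EXC_B$ iff $\epsilon_c = -1$, so $S_C = 1 - t$; and for a cycle $C = (c_1, \ldots, c_m)$ of length $m \geq 2$, $c_j \in \EXC_B$ iff $\epsilon_{c_{j+1}} = +1$ when $c_{j+1}$ is an excedance of $\sigma$, and iff $\epsilon_{c_j} = -1$ otherwise. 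Consequently $\epsilon_{c_k}$ enters $S_C$ only through the pair of excedance statuses of $c_k$ and $c_{k+1}$, and the inner sum factorizes as $S_C = \prod_k \bigl(g_k(+1) - g_k(-1)\bigr)$, whose single-variable differences are $\pm(1-t)$ when the two statuses agree and $0$ when they differ.

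The main obstacle is to argue that every cycle of length $\geq 2$ actually hits a vanishing factor. The key observation is that in such a cycle the minimum element is always an excedance of $\sigma$ (it goes up) while the maximum is always a non-excedance (it goes down), so the cyclic sequence of excedance statuses must switch at least once and $S_C = 0$. Therefore only $\sigma = \id$ survives the outer sum, and one concludes $\sgnBE_n(t) = \prod_{i=1}^n (1-t) = (1-t)^n$. The extra factor $(1-t)$ compared to Mantaci's identity $\SgnAAE_n(t) = (1-t)^{n-1}$ is precisely what is contributed by the new sign degree of freedom $\epsilon_i \in \{\pm 1\}$ at each fixed point of $\sigma$, which is absent in type A.
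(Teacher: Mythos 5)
Your argument is correct, and it is necessarily a different route from the paper's, because the paper does not prove Theorem \ref{thm:siva-sgn-b-exc} at all: it quotes the identity from \cite{siva-sgn_exc_hyp}, where it is obtained by evaluating a determinant (in the same spirit as the determinantal proof of Mantaci's Theorem \ref{thm:mantaci} in \cite{siva-exc-det}). What you give instead is a self-contained cancellation argument: writing $\pi(i)=\epsilon_i\sigma(i)$, using $(-1)^{\inv_B(\pi)}=\sgn(\sigma)\prod_i\epsilon_i$, and exploiting the fact that under Brenti's definition the membership $i\in\EXC_B(\pi)$ depends only on $\sigma$ and on exactly one of $\epsilon_i,\epsilon_{\sigma(i)}$ (namely $\epsilon_{\sigma(i)}$ when $\sigma(i)\in\EXC(\sigma)$ and $\epsilon_i$ otherwise, once $\sigma(i)\neq i$). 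I checked your four-way case analysis and the resulting factorization $S_C=\prod_k\bigl(h_k(+1)-h_k(-1)\bigr)$; the factors are indeed $\pm(1-t)$ or $0$ according to whether consecutive cycle elements have equal or different $\EXC(\sigma)$-status, and your min/max observation forces a zero factor in every cycle of length at least $2$, leaving only $\sigma=\id$ with total weight $(1-t)^n$. This is arguably closer in flavor to the sign-reversing involutions the authors themselves deploy in Section 5 for $\SgnBDes^D_n(s,t)$ than to the cited determinant computation, and it has the virtue of explaining combinatorially which signed permutations survive. One caveat: your closing sentence comparing with Mantaci's $(1-t)^{n-1}$ is only heuristic and should not be read as part of the proof --- in type A the cancellation does not reduce to $\sigma=\id$, so the ``one factor of $(1-t)$ per fixed point'' picture does not literally transport back to $\SgnAAE_n(t)$.
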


\begin{remark}
\label{rem:Brenti_siva}
Brenti in \cite[Theorem 3.15]{brenti-q-eulerian-94} 
showed that for positive integers $n$, we 
have $B_n(t) = \BE_n(t)$.
From Theorems \ref{thm:reiner-sgn-b-des} and 
\ref{thm:siva-sgn-b-exc}, we infer that 
$\SB_n(t) = \sgnBE_n(t)$ for all positive
integers $n$.  
Thus, we further have 
$B_n^+(t) = \BE_n^+(t)$ and
$B_n^-(t) = \BE_n^-(t)$. This is a noteworthy difference
between the type A and the type B Coxeter groups.
\end{remark}

The main result of this Section is the following
type B counterpart of 
Theorem \ref{thm:clt_exc_plus_minus}.  

\begin{theorem}
\label{thm:typeb-clt_exc_plus_minus}
The distribution of the coefficients of $\BE_n^{\pm}(t)$ (and hence
of $B_n^{\pm}(t)$) is asymptotically normal as $n \to \infty$.  
The random variables $X_{\exc_B}$ and $X_{\des_B}$ over $\BB_n^{\pm}$
have mean $n/2$ when $n \geq 2$, and 
have variance $(n+1)/12$ when $n \geq 3$.
\end{theorem}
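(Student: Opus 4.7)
The plan is to mirror the proof of Theorem \ref{thm:clt_exc_plus_minus}, substituting the type B ingredients. The starting point is the factorization
\[
\BE_n^{\pm}(t) \;=\; \frac{1}{2}\Big(\BE_n(t) \pm \sgnBE_n(t)\Big) \;=\; \frac{1}{2}\Big(\BE_n(t) \pm (1-t)^n\Big),
\]
where the second equality invokes Theorem \ref{thm:siva-sgn-b-exc}. The exact same identity with $B_n$ in place of $\BE_n$ holds via Theorem \ref{thm:reiner-sgn-b-des}, and by Remark \ref{rem:Brenti_siva} the polynomials on both sides coincide in either formulation. It therefore suffices to analyze $\BE_n^{\pm}(t)$.

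Next, I would apply Lemma \ref{lem:connection_between_all_and_plus} with $F_n(t) = 2\BE_n^{\pm}(t)$, $G_n(t) = \BE_n(t)$, $\lambda = 1$, $H_n(t) \equiv 1$ and $\ell_n = n$. The lemma then guarantees that for all $r \leq n-1$, the $r$-th (factorial, and hence ordinary) moment of the distribution induced by $\BE_n^{\pm}(t)$ coincides with the $r$-th moment of the distribution induced by $\BE_n(t)$. By Brenti's equidistribution $\BE_n(t) = B_n(t)$, so these moments agree with those of $X_{\des_B}$ on $\BB_n$.

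By the Kahle-Stump Theorem \ref{thm:clt-kahle-stump}, $X_{\des_B}$ on $\BB_n$ is asymptotically normal with mean $n/2$ and variance $(n+1)/12$. Since the normal distribution is determined by its moments, the method of moments (exactly as used in Section \ref{subsec:main_lemma_typea}) delivers the desired CLT for $\BE_n^{\pm}(t)$, and hence for $B_n^{\pm}(t)$. The agreement of the first moment requires $1 \leq n-1$, giving mean $n/2$ as soon as $n \geq 2$, and the agreement of the first two moments requires $2 \leq n-1$, giving variance $(n+1)/12$ as soon as $n \geq 3$.

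No serious technical obstacle arises: the argument is a direct transplant of the type A proof. The conceptual point that makes the transplant work cleanly is that \emph{both} the Reiner signed descent identity and the Sivasubramanian signed excedance identity yield the same polynomial $(1-t)^n$ of degree $n$; this is precisely what produces an exponent $\ell_n = n$ large enough to force both the mean and variance through Lemma \ref{lem:connection_between_all_and_plus}, and it simultaneously explains why the two signed Eulerian polynomials $B_n^{\pm}(t)$ and $\BE_n^{\pm}(t)$ obey the same CLT.
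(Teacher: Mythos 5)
Your proposal is correct and follows essentially the same route as the paper: the same factorization $\BE_n^{\pm}(t) = \tfrac{1}{2}\big(B_n(t) \pm (1-t)^n\big)$ via Theorem \ref{thm:siva-sgn-b-exc} and Remark \ref{rem:Brenti_siva}, the same application of Lemma \ref{lem:connection_between_all_and_plus} with $\ell_n = n$ to match the first $n-1$ moments, and the same appeal to Theorem \ref{thm:clt-kahle-stump} plus the method of moments. The only (harmless) cosmetic difference is your explicit normalization $F_n(t) = 2\BE_n^{\pm}(t)$, which does not affect the induced distribution.
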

\begin{proof}
For $n \geq 1$, we clearly have 
\begin{equation}
\label{eqn:factor_reln_typeB}
\BE_n^{\pm}(t) = 
\frac{1}{2} \Big( \BE_n(t) \pm \sgnBE_n(t) \Big) = 
\frac{1}{2} \Big( B_n(t) \pm (1-t)^n \Big).
\end{equation}
We have used $\BE_n(t) = B_n(t)$ above.  Thus, by Lemma 
\ref{lem:connection_between_all_and_plus}, the first $n-1$ moments of 
$\BE_n^{\pm}(t)$ are identical to the first $n-1$ moments of $B_n(t)$.  
By Theorem \ref{thm:clt-kahle-stump},
$B_n(t)$ is asymptotically normal.  Hence, by the method of moments, 
$\BE_n^{\pm}(t)$ is also asymptotically normal.  Further, they
have the same expected value as $B_n(t)$ when $n \geq 2$ and the same
variance as $B_n(t)$ when $n \geq 3$.  The proof is complete.
\end{proof}

\section{Type D Coxeter Groups}
\label{sec:typed}

The type D Coxeter group $\DD_n \subseteq \BB_n$ can be 
combinatorially defined as $\DD_n = \{ \pi \in \BB_n: \Negs(\pi) 
\mbox{ is even} \}$.  That is, $\DD_n$ consists of those signed 
permutations in $\BB_n$ with 
an even number of negative elements.
See the 
book by Bj{\"o}rner and Brenti \cite[Chapter 8]{bjorner-brenti}.

For a positive integer $n \geq 2$, define $[n]_0 = \{0,1,\ldots,n\}$.
For 
$\pi = \pi_1,\pi_2,\ldots,\pi_n \in \DD_n$, let $\pi_0 = -\pi_2$. 
Define 
$\DES_D(\pi) = \{ i \in [n-1]_0: \pi_i > \pi_{i+1} \}$ and let
$\des_D(\pi) = |\DES_D(\pi)|$.
Define
$D_{n,k} = | \{ \pi \in \DD_n: \des_D(\pi) = k \}|$.
In $\DD^{\pm}_n$, define 
$D_{n,k}^+ = |\{ \pi \in \DD_n^+: \des_D(\pi) = k \} |$ and
$D_{n,k}^- = |\{ \pi \in \DD_n^-: \des_D(\pi) = k \} |$.  Define

\begin{equation}
\label{eqn:defn_dtype_pm_des_eul}
D_n^+(t) = \sum_{k=0}^n D_{n,k}^+t^k  
\hspace{0.32 cm} \mbox{ and } \hspace{0.32 cm}
D_n^-(t) = \sum_{k=0}^n D_{n,k}^-t^k.  
\end{equation}

As $\DD_n \subseteq \BB_n$, we use the same definition of 
excedance in $\DD_n$.  Thus, for $\pi \in \DD_n$, we have 
$\exc_D(\pi) = \exc_B(\pi)$. Let 
$\DE_{n,k}^+ = |\{ \pi \in \DD_n^+: \exc_B(\pi) = k \}|$ and 
$\DE_{n,k}^- = |\{ \pi \in \DD_n^-: \exc_B(\pi) = k \}|$.  
Define
$\DE_n^+(t) = \sum_{k=0}^n \DE_{n,k}^+t^k$ and
$\DE_n^-(t) = \sum_{k=0}^n \DE_{n,k}^-t^k$.
Lastly, let  
$\SDD_n(t) = \sum_{\pi \in \DD_n} (-1)^{\inv_D(\pi)} t^{\des_D(\pi)}$ and
$\sgnDE_n(t) = \sum_{\pi \in \DD_n} (-1)^{\inv_D(\pi)} t^{\exc_D(\pi)}$.
Reiner in \cite{reiner-descents-weyl} and
Sivasubramanian in \cite{siva-weyl_linear}
showed the following.

\begin{theorem}[Reiner]
\label{thm:reiner-sgn-d-des}
For positive integers $n$,
$$
\SDD_n(t)= 
\begin{cases}
(1-t)^n & \text {if $n$ is even, }\\
(1+t)(1-t)^{n-1} & \text {if $n$ is odd.}
\end{cases}
$$
\end{theorem}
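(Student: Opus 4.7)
The plan is to reduce the type D signed descent enumeration to Reiner's type B identity $\SB_n(t) = (1-t)^n$ (Theorem \ref{thm:reiner-sgn-b-des}), exploiting that $\DD_n$ sits inside $\BB_n$ as the index-two subgroup of signed permutations with $\Negs(\pi)$ even. The first key observation is that on $\DD_n$ the two Coxeter signs agree, since $\inv_B(\pi) = \inv_D(\pi) + \Negs(\pi)$ and $\Negs(\pi) \equiv 0 \pmod 2$. Using the indicator $(1 + (-1)^{\Negs(\pi)})/2$ of $\DD_n$ inside $\BB_n$, I would split
\[
\SDD_n(t) \;=\; \tfrac{1}{2}\bigl(U_n(t) + V_n(t)\bigr),
\]
where $U_n(t) := \sum_{\pi \in \BB_n} (-1)^{\inv_B(\pi)}\, t^{\des_D(\pi)}$ and $V_n(t) := \sum_{\pi \in \BB_n} (-1)^{\inv_B(\pi) + \Negs(\pi)}\, t^{\des_D(\pi)}$, with the formula $\des_D(\pi) = |\{i \in [n-1]_0 : \pi_i > \pi_{i+1}\}|$ (using the convention $\pi_0 := -\pi_2$) extended to all of $\BB_n$.

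Next, I would reconcile $\des_D$ with $\des_B$. The two statistics agree at positions $i \geq 1$, so they differ only at the boundary: $\des_D(\pi) - \des_B(\pi) = [\pi_1 + \pi_2 < 0] - [\pi_1 < 0] \in \{-1, 0, 1\}$, and is nonzero exactly when $\pi_1$ and $\pi_2$ have opposite signs with $|\pi_2| > |\pi_1|$. On this exceptional subset I would construct a sign-reversing, $\des_D$-preserving involution on $\BB_n$ pairing up contributions; a natural candidate is a local move acting only on entries at positions $i \geq 3$, so that the coupled positions $0,1,2$ are not disturbed. After this cancellation, $U_n(t)$ collapses to $\sum_{\pi \in \BB_n}(-1)^{\inv_B(\pi)} t^{\des_B(\pi)} = (1-t)^n$ by Theorem \ref{thm:reiner-sgn-b-des}, and $V_n(t)$ reduces to the twisted sum $\sum_{\pi \in \BB_n} (-1)^{\inv_B(\pi) + \Negs(\pi)}\, t^{\des_B(\pi)}$.

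Finally, I would evaluate this twisted auxiliary sum. Since $(-1)^{\Negs}$ is a nontrivial linear character of $\BB_n$, a second sign-reversing involution based on negating a distinguished entry, or an induction on $n$ that removes the largest value, should yield $(1-t)^n$ when $n$ is even and $\pm(1+t)(1-t)^{n-1}$ when $n$ is odd, the parity sensitivity being exactly what produces the dichotomy in the statement. Combining $U_n(t)$ and $V_n(t)$ then gives the claimed formula. The main obstacle will be designing the two involutions so that they genuinely preserve $\des_D$ (respectively $\des_B$) while flipping the required sign; the naive move that swaps adjacent consecutive values flips $\des$ at the swap position, which is incompatible with $\des$-preservation, so the correct involution must be more subtle (a longer block swap, or a construction driven by the cycle structure of $\pi$). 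Navigating this, together with the boundary case where no admissible move exists at positions $i \geq 3$, is where the bulk of the technical work resides.
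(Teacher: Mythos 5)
First, note that the paper does not prove this statement at all: it is quoted from Reiner \cite{reiner-descents-weyl} and used as a black box, so there is no internal proof to compare against. Judged on its own terms, your proposal has a decisive gap at its central step.

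The claim that $U_n(t)=\sum_{\pi\in\BB_n}(-1)^{\inv_B(\pi)}t^{\des_D(\pi)}$ ``collapses to'' $\SB_n(t)=(1-t)^n$ is false. In fact $U_n(t)=0$ identically for $n\ge 2$: the map that negates $\pi_1$ and fixes all other entries is a sign-reversing, $\des_D$-preserving involution on all of $\BB_n$. It flips the parity of $\inv_B$ (flipping the sign of a single entry changes the parity of type B inversions, as the paper itself uses via \cite[Lemma 3]{siva-sgn_exc_hyp}), while the descent condition at position $0$, namely $-\pi_2>\pi_1$, and the one at position $1$, namely $\pi_1>\pi_2$, are exactly interchanged when $\pi_1\mapsto-\pi_1$; positions $i\ge 2$ are untouched, so $\des_D$ is preserved. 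Concretely for $n=2$ one computes $U_2(t)=0$ while $(1-t)^2\ne 0$, and the ``exceptional set'' $\{(1,\bar 2),(\bar 1,2)\}$ contributes $-1-t^2\ne 0$ to $U_2$, so no sign-reversing $\des_D$-preserving involution on that set can exist. Consequently your proposed cancellation step cannot be carried out, and the intermediate evaluations of both $U_n$ and $V_n$ are wrong (for $n=2$ one has $V_2(t)=2(1-t)^2$, not $(1-t)^2$; the two errors happen to compensate in the sum $\tfrac12(U_n+V_n)$).

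Worse, since $U_n\equiv 0$, your decomposition gives $\SDD_n(t)=\tfrac12 V_n(t)$ with $V_n(t)=\sum_{\pi\in\BB_n}(-1)^{\inv_D(\pi)}t^{\des_D(\pi)}$, i.e.\ it merely restates the problem over $\BB_n$ instead of $\DD_n$ without evaluating anything. All the content of the theorem is then concentrated in $V_n$, for which you offer only the unspecified suggestion of ``a second sign-reversing involution based on negating a distinguished entry'' --- but negating an entry preserves the parity of $\inv_D$ (it changes $\inv_B$ and $\Negs$ by the same parity), so it is sign-preserving for $V_n$ and cannot drive a cancellation there. A workable elementary route would instead be an induction inserting $\pm n$ into an element of $\BB_{n-1}$ (or the two-step insertion of $\pm n,\pm(n-1)$ that the paper uses for its related Theorem \ref{thm:main_theorem_signed_desB_enumerate_over_type_d}), tracking how $\des_D$ and $\inv_D$ change with the insertion position; as written, your argument does not reach the stated formula.
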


\begin{theorem}[Sivasubramanian]
\label{thm:siva-sgn-d-exc}
For positive integers $n$, 
$$
\sgnDE_n(t)= \begin{cases}
(1-t)^n & \text {if $n$ is even, }\\
(1-t)^{n-1} & \text {if $n$ is odd. }
\end{cases}
$$
\end{theorem}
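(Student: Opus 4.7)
The plan is to reduce the identity to the type B signed excedance result of Theorem \ref{thm:siva-sgn-b-exc} via a half-sum decomposition, then evaluate one remaining twisted sum over $\BB_n$. The key algebraic input is the standard length identity $\inv_B(\pi) - \inv_D(\pi) = \Negs(\pi)$ valid on $\BB_n$, which yields $(-1)^{\inv_D(\pi)} = (-1)^{\inv_B(\pi)}$ on $\DD_n$ (where $\Negs(\pi)$ is even). Since also $\exc_D(\pi) = \exc_B(\pi)$ on $\DD_n$ by definition, the theorem's left-hand side becomes
\[
\sgnDE_n(t) \;=\; \sum_{\pi \in \DD_n} (-1)^{\inv_B(\pi)}\, t^{\exc_B(\pi)}.
\]

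Next, using the indicator $\mathbf{1}[\pi \in \DD_n] = \tfrac{1}{2}\bigl(1 + (-1)^{\Negs(\pi)}\bigr)$ I would extend this to a sum over all of $\BB_n$, obtaining
\[
\sgnDE_n(t) \;=\; \tfrac{1}{2}\,\sgnBE_n(t) \;+\; \tfrac{1}{2}\, T_n(t), \quad\text{where}\quad T_n(t) \;:=\; \sum_{\pi \in \BB_n} (-1)^{\inv_B(\pi) + \Negs(\pi)}\, t^{\exc_B(\pi)}.
\]
Invoking Theorem \ref{thm:siva-sgn-b-exc} to replace $\sgnBE_n(t)$ by $(1-t)^n$ and matching against the desired formula reduces the theorem to proving $T_n(t) = (1-t)^n$ when $n$ is even and $T_n(t) = (1+t)(1-t)^{n-1}$ when $n$ is odd. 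It is striking that this right-hand side is exactly Reiner's $\SDD_n(t)$ from Theorem \ref{thm:reiner-sgn-d-des}; this may itself suggest a bijective reason, but I would not pursue that route first.

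To evaluate $T_n(t)$, I would adapt the determinantal technique used in \cite{siva-sgn_exc_hyp} to prove Theorem \ref{thm:siva-sgn-b-exc}. The idea is to construct an $n \times n$ matrix $M_n(t)$ whose $(i,j)$-entry encodes the contribution of $\pi(i) = \pm j$ under Brenti's excedance convention, with negative-image entries carrying the extra sign flip coming from $(-1)^{\Negs}$. Expansion of $\det M_n(t)$ over all $\pi \in \BB_n$ should then recover $T_n(t)$, after which triangularization or explicit row/column reductions collapse the determinant to the claimed factored form. The parity split appears because diagonal-type entries corresponding to $\pi(i) = -i$ (which Brenti's convention counts as excedances, via the term $\{i : \pi_i = -i\}$) interact with the global twisted sign differently according to $n \bmod 2$, producing the extra factor of $(1+t)$ when $n$ is odd.

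The main obstacle is this last step. Brenti's definition of $\exc_B$ is non-local, since membership of $i$ in $\EXC_B(\pi)$ depends on $\pi_{|\pi(i)|}$, so arranging a matrix whose permutation expansion reproduces $T_n(t)$ exactly is the delicate point, and tracking the parity dependence cleanly requires some care. An alternative plan is to construct a sign-reversing, $\exc_B$-preserving involution on $\BB_n$ with respect to $(-1)^{\inv_B + \Negs}$, so that $T_n(t)$ is enumerated by the fixed points; but this faces the same combinatorial subtlety with Brenti's convention, and seems no easier than the matrix route.
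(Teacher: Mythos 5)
The paper does not actually prove this theorem: it is quoted from \cite{siva-weyl_linear} (alongside Reiner's descent analogue) and used as a black box, so there is no in-paper argument to compare yours against. Judged on its own, your proposal contains a genuine gap at the decisive step.

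Your setup is correct as far as it goes: on $\DD_n$ one has $(-1)^{\inv_D(\pi)}=(-1)^{\inv_B(\pi)}$ by Remark \ref{rem:invb_invd_ddn}, the indicator $\tfrac12\bigl(1+(-1)^{\Negs(\pi)}\bigr)$ gives $\sgnDE_n(t)=\tfrac12\sgnBE_n(t)+\tfrac12 T_n(t)$, and the target values $T_n(t)=(1-t)^n$ ($n$ even) and $(1+t)(1-t)^{n-1}$ ($n$ odd) are the right ones. But this reduction makes no real progress. Since $\inv_B+\Negs\equiv\inv_D\pmod 2$, your $T_n(t)$ is just $\sum_{\pi\in\BB_n}(-1)^{\inv_D(\pi)}t^{\exc_B(\pi)}$, which splits as $\sgnDE_n(t)$ plus the corresponding sum over $\BB_n-\DD_n$; equivalently, $\tfrac12(\sgnBE_n+T_n)=\sgnDE_n$ is an identity that holds for trivial reasons, and evaluating $T_n$ is exactly as hard as evaluating $\sgnDE_n$ given Theorem \ref{thm:siva-sgn-b-exc}. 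The entire content of the theorem has been pushed into the claim $T_n(t)=(1-t)^n$ or $(1+t)(1-t)^{n-1}$, for which you offer only a plan (a determinant whose permutation expansion reproduces $T_n$, or a sign-reversing involution) while explicitly conceding that the non-local nature of Brenti's $\exc_B$ --- membership of $i$ in $\EXC_B(\pi)$ depends on $\pi_{|\pi(i)|}$ --- is an unresolved obstacle. The observation that the target for $T_n$ coincides with Reiner's formula for $\SDD_n(t)$ is a coincidence of right-hand sides, not an argument, since $\SDD_n$ counts descents over $\DD_n$ rather than this twisted excedance sum over $\BB_n$. As written, the proof would need the determinant (or involution) actually constructed and evaluated, including a verified explanation of where the parity-dependent factor comes from; until then the theorem is not established.
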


The first main result of this Section is the following
type D counterpart of Theorem \ref{thm:clt_exc_plus_minus}.  
\begin{theorem}
\label{thm:typed-clt_des_plus_minus}
The distribution of the coefficients of $D_n^{\pm}(t)$ 
is asymptotically
normal as $n \to \infty$.  Over $\DD_n^{\pm}$, the random variable 
$X_{\des_D}$ has mean $n/2$ when $n \geq 3$, and has 
variance $(n+2)/12$ when $n \geq 4$.
\end{theorem}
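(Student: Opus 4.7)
The plan is to mirror the two-line proof strategy used for Theorems \ref{thm:clt_exc_plus_minus} and \ref{thm:typeb-clt_exc_plus_minus}. Splitting $\DD_n = \DD_n^+ \sqcup \DD_n^-$ by the parity of $\inv_D$, I would first write the tautology
\[
D_n^{\pm}(t) \;=\; \tfrac{1}{2}\bigl(D_n(t) \pm \SDD_n(t)\bigr).
\]
Reiner's evaluation in Theorem \ref{thm:reiner-sgn-d-des} then exhibits $\SDD_n(t)$ as an explicit multiple of $(1-t)^{n-1}$: namely $\SDD_n(t) = (1-t)^n$ when $n$ is even, and $\SDD_n(t) = (1+t)(1-t)^{n-1}$ when $n$ is odd. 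Thus in either parity there is a rewriting
\[
D_n^{\pm}(t) \;=\; \tfrac{1}{2} D_n(t) \,\pm\, (1-t)^{\ell_n} H_n(t),
\]
where $\ell_n \in \{n-1,n\}$ (hence $\ell_n \to \infty$), and $H_n(t)$ is the polynomial $1/2$ when $n$ is even and $(1+t)/2$ when $n$ is odd.

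Next I would invoke Lemma \ref{lem:connection_between_all_and_plus} with $F_n = D_n^{\pm}$, $G_n = D_n$, $\lambda = 1/2$, and $\ell_n, H_n$ as above, to conclude that the first $\ell_n - 1$ factorial moments of $X_{\des_D}$ on $\DD_n^{\pm}$ coincide with those of $X_{\des_D}$ on $\DD_n$. Since $\ell_n \to \infty$, every fixed $r$-th moment eventually matches. Combining this with Theorem \ref{thm:clt-kahle-stump} of Kahle and Stump, which states that $X_{\des_D}$ on $\DD_n$ is asymptotically normal with mean $n/2$ and variance $(n+2)/12$, the method of moments delivers the same asymptotic normality for $X_{\des_D}$ on $\DD_n^{\pm}$.

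The exact numerical thresholds in the theorem statement then amount to checking $r = 1$ and $r = 2$ in the lemma. For the mean, one needs $r = 1 < \ell_n$: when $n$ is odd this gives $n - 1 > 1$, i.e.\ $n \geq 3$, and the even cases $n \geq 2$ are automatically covered, producing the stated bound $n \geq 3$. For the variance, one needs $r = 2 < \ell_n$: for $n$ odd this forces $n \geq 5$, while for $n = 4$ (even) one has $\ell_n = 4 > 2$ directly, producing the stated bound $n \geq 4$.

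I do not foresee any genuine obstacle; the argument is a mechanical adaptation of the type A and type B cases. The only nontrivial bookkeeping is the parity split in Reiner's formula, which shifts $\ell_n$ by one but never affects the limiting CLT since only $\ell_n \to \infty$ is needed for the method of moments.
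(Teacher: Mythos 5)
Your proposal is correct and follows essentially the same route as the paper: the identity $D_n^{\pm}(t)=\tfrac12\bigl(D_n(t)\pm\SDD_n(t)\bigr)$, Reiner's evaluation of $\SDD_n(t)$, Lemma \ref{lem:connection_between_all_and_plus} with $\ell_n\in\{n-1,n\}$ depending on parity, and the method of moments via the Kahle--Stump CLT. The parity bookkeeping for the thresholds $n\geq 3$ (mean) and $n\geq 4$ (variance) also matches the paper's argument.
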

\begin{proof}
For $n \geq 1$, we clearly have
$$
D_n^{\pm}(t)  =  \frac{1}{2} \Big( D_n(t) \pm \SDD_n(t) \Big)  = 
\begin{cases}
\displaystyle \frac{D_n(t) \pm (1-t)^n}{2} & \text {if $n$ is even, }
\vspace{2 mm}
\\ 
\displaystyle \frac{D_n(t) \pm (1+t)(1-t)^{{n-1}}}{2} & \text {if $n$ is odd. }
\end{cases}
$$
Thus, irrespective of the parity of $n$, 
by Lemma \ref{lem:connection_between_all_and_plus}, the first
$n-2$ moments of $D_n^{\pm}(t)$ are identical to the
first $n-2$ moments of $D_n(t)$.  Since the coefficients of $D_n(t)$ 
are asymptotically normal, by the method of moments, so are the 
coefficients of $D_n^{\pm}(t)$.  Further, they
have the same expected value as $D_n(t)$ when $n \geq 3$ and the 
same variance as $D_n(t)$ when $n \geq 4$.  The proof is complete.
\end{proof}

\begin{remark}
\label{rem:typed-exc-done}
In \cite[Theorem 16]{siva-dey-gamma_positive_excedances_alt_group}, Dey
and Sivasubramanian show that the sum of 
$\exc_B$ 
over  $\BB_n^+$ equals the sum of 
$\exc_D$ over  $\DD_n$.  Hence 
by Theorem \ref{thm:typeb-clt_exc_plus_minus}, 
a CLT follows for $X_{\exc_D}$ over $\DD_n$.
\end{remark}

Another result of this Section is the following
type D counterpart of Theorem \ref{thm:clt_exc_plus_minus}.  
\begin{theorem}
\label{thm:typed-clt_exc_plus_minus}
The distribution of the coefficients of $\DE_n^{\pm}(t)$ 
is asymptotically
normal as $n \to \infty$.  Over $\DD_n^{\pm}$, the random variable 
$X_{\exc_D}$ has mean $n/2$ when $n \geq 3$, and has 
variance $(n+1)/12$ when $n \geq 4$.
\end{theorem}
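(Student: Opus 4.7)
The plan is to follow the same pattern used in the proofs of Theorems \ref{thm:clt_exc_plus_minus}, \ref{thm:typeb-clt_exc_plus_minus} and \ref{thm:typed-clt_des_plus_minus}: write $\DE_n^\pm(t)$ as a half-sum of the full excedance polynomial and its signed version, observe that the signed polynomial has a high-order factor of $(1-t)$, and then invoke Lemma \ref{lem:connection_between_all_and_plus} together with the method of moments.

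First, by definition, for all $n \ge 1$,
\[
\DE_n^\pm(t) \;=\; \tfrac{1}{2}\bigl(\DE_n(t) \pm \sgnDE_n(t)\bigr).
\]
By Theorem \ref{thm:siva-sgn-d-exc}, $\sgnDE_n(t) = (1-t)^n$ when $n$ is even and $\sgnDE_n(t) = (1-t)^{n-1}$ when $n$ is odd; in either case $(1-t)^{n-1}$ divides $\sgnDE_n(t)$. Hence, taking $\lambda = 1/2$, $\ell_n = n-1$ and an appropriate polynomial $H_n(t)$ (either $H_n(t) = 1$ or $H_n(t) = 1-t$ depending on parity), the relation
\[
\DE_n^\pm(t) \;=\; \tfrac{1}{2}\,\DE_n(t) \;\pm\; (1-t)^{\,n-1} H_n(t)
\]
fits the hypothesis of Lemma \ref{lem:connection_between_all_and_plus}. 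The lemma then implies that for every $r \le n-2$, the $r$-th moment of the random variable $X_{\exc_D}$ sampled from $\DD_n^\pm$ (under the distribution given by $\DE_n^\pm(t)$) coincides with the $r$-th moment of the same statistic sampled uniformly from $\DD_n$.

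Next, I need the asymptotic normality, mean and variance of $\DE_n(t)$ itself. These are provided by Remark \ref{rem:typed-exc-done}: the coefficients of $\DE_n(t)$ agree with those of $\BE_n^+(t)$, which by Theorem \ref{thm:typeb-clt_exc_plus_minus} is asymptotically normal with mean $n/2$ (for $n \geq 2$) and variance $(n+1)/12$ (for $n \geq 3$). In particular, after centering at $n/2$ and rescaling by $\sqrt{(n+1)/12}$, the standardized version of $X_{\exc_D}$ on $\DD_n$ has moments converging to those of the standard normal distribution.

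Finally, combining the two steps: the matching of the first $n-2$ moments of $\DE_n^\pm(t)$ with those of $\DE_n(t)$ means that the standardized moments of $\DE_n^\pm(t)$ also converge to the corresponding normal moments. Since the normal distribution is determined by its moments, the method of moments yields the desired central limit theorem for $\DE_n^\pm(t)$. The threshold $n \ge 3$ for the mean and $n \ge 4$ for the variance comes from requiring $n-2 \ge 1$ and $n-2 \ge 2$ respectively, so that the first moment (and then the second moment) of $\DE_n^\pm(t)$ is forced to equal that of $\DE_n(t)$. I do not anticipate a serious obstacle; the only point requiring a little care is the parity split in Theorem \ref{thm:siva-sgn-d-exc}, which is resolved uniformly by using the weaker divisibility $(1-t)^{n-1} \mid \sgnDE_n(t)$ valid for all $n$.
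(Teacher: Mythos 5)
Your proposal is correct and follows essentially the same route as the paper: the same decomposition $\DE_n^{\pm}(t) = \tfrac{1}{2}(\DE_n(t) \pm \sgnDE_n(t))$, the same appeal to Theorem \ref{thm:siva-sgn-d-exc} and Lemma \ref{lem:connection_between_all_and_plus} with the uniform bound $\ell_n = n-1$ covering both parities, and the same use of Remark \ref{rem:typed-exc-done} to supply asymptotic normality of $\DE_n(t)$ before concluding by the method of moments.
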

\begin{proof}
For $n \geq 1$, we clearly have
$$
\DE_n^{\pm}(t)  =  \frac{1}{2} \Big( \DE_n(t) \pm \sgnDE_n(t) \Big)  = 
\begin{cases}
\displaystyle \frac{\DE_n(t) \pm (1-t)^n}{2} & \text {if $n$ is even, }\\
\displaystyle \frac{\DE_n(t) \pm (1-t)^{{n-1}}}{2} & \text {if $n$ is odd. }
\end{cases}
$$
Thus, irrespective of the parity of $n$, 
by Lemma \ref{lem:connection_between_all_and_plus}, the first
$n-2$ moments of $\DE_n^{\pm}(t)$ are identical to the
first $n-2$ moments of $\DE_n(t)$.  By Remark \ref{rem:typed-exc-done}
the coefficients of $\DE_n(t)$ are asymptotically normal.  Thus, by 
the method of moments, the coefficients of 
$\DE_n^{\pm}(t)$ are also asymptotically normal.  Further, they
have the same expected value as $\DE_n(t)$ when $n \geq 3$ and the
same variance as $\DE_n(t)$ when $n \geq 4$.  The proof is complete.
\end{proof}

Note that Theorem  
\ref{thm:typed-clt_exc_plus_minus}
refines Theorem \ref{thm:typeb-clt_exc_plus_minus}.

\section{Results on Borowiec and M\l{}otkowski's variant}
\label{sec:boroweic_mlot_refine}

In \cite{boroweic-mlotkowski-new-eulerian-type-d}, 
Borowiec and M\l{}otkowski enumerated type B descents $\des_B$ 
over $\DD_n$ and $\BB_n - \DD_n$.  Let 
$\BDes_{n,k}^D = |\{ \pi \in \DD_n: \des_B(\pi) = k \}|$ and 
$\BDes_{n,k}^{B-D} = |\{ \pi \in \BB_n - \DD_n: \des_B(\pi) = k \}|$.
They considered the polynomials
\begin{equation}
\label{eqn:boroweic_mlot_original}
\BDes^D_n(t)  =  \sum_{k=0}^n  \BDes_{n,k}^D t^k
\hspace{0.32 cm} \mbox{ and } \hspace{0.32 cm}
\BDes^{B-D}_n(t)  =  \sum_{k=0}^n \BDes_{n,k}^{B-D} t^k.
\end{equation}

In \cite[Equations (24),(25)]{boroweic-mlotkowski-new-eulerian-type-d}, 
they showed the following.  

\begin{theorem}[Borowiec and M\l{}otkowski]
\label{thm:boroweic-mlot-eqns24-25}
For positive integers $n$, the two polynomials 
$\BDes^D_n(t)$ and
$\BDes^{B-D}_n(t)$ satisfy the following:
\begin{equation}
\label{eqn:copy_boroweic}
\BDes^D_n(t) = \frac{1}{2} \Big( B_n(t) + (1-t)^n \Big) 
\hspace{0.32 cm}
\mbox{ and }
\hspace{0.32 cm}
\BDes^{B-D}_n(t)= \frac{1}{2} \Big( B_n(t) - (1-t)^n \Big).
\end{equation}
\end{theorem}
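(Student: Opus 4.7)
The plan is to first observe that summing the two equations in the theorem yields $\BDes^D_n(t) + \BDes^{B-D}_n(t) = B_n(t)$, which is immediate from the disjoint union $\BB_n = \DD_n \sqcup (\BB_n - \DD_n)$. Hence the theorem reduces to verifying the difference, which, since $\pi \in \DD_n$ iff $\Negs(\pi)$ is even, is equivalent to the signed enumeration identity
$$\Sigma_n := \sum_{\pi \in \BB_n} (-1)^{\Negs(\pi)} t^{\des_B(\pi)} = (1-t)^n.$$
Note that this is \emph{not} an immediate consequence of Reiner's result (Theorem \ref{thm:reiner-sgn-b-des}), because the character $(-1)^{\Negs}$ on $\BB_n$ is different from $(-1)^{\inv_B}$; so a separate argument is needed, even though both signed sums happen to coincide.

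I would prove the identity by induction on $n$ via a sign-reversing involution. For $\pi \in \BB_n$, let $p(\pi) \in [n]$ denote the unique position with $|\pi_{p(\pi)}| = 1$, and define $\iota\colon \BB_n \to \BB_n$ to flip the sign of the entry at position $p(\pi)$. This is an involution that toggles the parity of $\Negs$. The key observation is that when $p(\pi) \geq 2$, $\iota$ preserves $\des_B$: the flip can only alter descents at positions $p(\pi)-1$ and $p(\pi)$, but since the neighbors $\pi_{p(\pi) \pm 1}$ all have absolute value at least $2$, the comparison is insensitive to whether $\pi_{p(\pi)}$ is $+1$ or $-1$ (both make $\pi_{p(\pi)-1} > \pi_{p(\pi)}$ equivalent to $\pi_{p(\pi)-1} \geq 2$, and analogously on the other side). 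All such pairs cancel in $\Sigma_n$.

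This leaves only $\pi$ with $p(\pi) = 1$. Here $\pi_1 = \pm 1$, and the remaining entries $(\pi_2, \ldots, \pi_n)$ form a signed arrangement of $\{2,\ldots,n\}$, which we biject with an element $\tilde\pi \in \BB_{n-1}$ by subtracting $1$ from every absolute value. A direct check using the convention $\pi_0 = 0$ shows that if $\pi_1 = 1$ then $\Negs(\pi) = \Negs(\tilde\pi)$ and $\des_B(\pi) = \des_B(\tilde\pi)$, while if $\pi_1 = -1$ then $\Negs(\pi) = 1 + \Negs(\tilde\pi)$ and $\des_B(\pi) = 1 + \des_B(\tilde\pi)$; the extra descent at $0$ in the second case precisely matches the descent at $1$ in the first. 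Summing the two choices of $\pi_1$ contributes a factor $(1-t)$, giving the recursion $\Sigma_n = (1-t)\Sigma_{n-1}$. Combined with the trivial base case $\Sigma_1 = 1 - t$, this yields $\Sigma_n = (1-t)^n$.

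The main obstacle is the verification that $\iota$ preserves $\des_B$ when $p(\pi) \geq 2$: this rests on the careful case analysis above, handling the endpoint case $p(\pi) = n$ separately (where only the descent at $p(\pi) - 1$ needs to be considered). Once this technical point is confirmed, the inductive step is a short computation and the remaining accounting with $\pi_0 = 0$ is routine.
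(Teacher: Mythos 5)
Your proposal is correct, but it is worth noting that the paper does not actually prove this statement: Theorem \ref{thm:boroweic-mlot-eqns24-25} is imported verbatim from Borowiec and M\l{}otkowski (their equations (24)--(25)), so you are supplying a proof where the authors only cite one. Your reduction is the right one: the sum of the two identities is trivial from $\BB_n = \DD_n \sqcup (\BB_n - \DD_n)$, and the difference is exactly $\sum_{\pi \in \BB_n} (-1)^{\Negs(\pi)} t^{\des_B(\pi)} = (1-t)^n$. You are also right to flag that this is \emph{not} Reiner's Theorem \ref{thm:reiner-sgn-b-des}, since $(-1)^{\Negs}$ and $(-1)^{\inv_B}$ are different characters (they differ by $(-1)^{\inv_D}$ via Remark \ref{rem:invb_invd_ddn}); indeed the coincidence of the two signed sums is precisely the content of Corollary \ref{cor:boroweic-mlot-eqns24-25_restate}, which the paper deduces \emph{from} this theorem rather than the other way around. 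I checked the two technical points: flipping the sign of the entry $\pm 1$ at a position $p \geq 2$ leaves every comparison $\pi_{p-1} > \pi_p$ and $\pi_p > \pi_{p+1}$ unchanged because the neighbours have absolute value at least $2$ (and $\pi_0 = 0$ is never a neighbour when $p \geq 2$), and the reduction map for $\pi_1 = \pm 1$ gives $\des_B(\pi) = \des_B(\tilde\pi)$, respectively $1 + \des_B(\tilde\pi)$, with the matching shift in $\Negs$, yielding $\Sigma_n = (1-t)\Sigma_{n-1}$ and $\Sigma_1 = 1 - t$. Your involution is a lighter-weight cousin of the one the authors build for Theorem \ref{thm:main_theorem_signed_desB_enumerate_over_type_d} (which strips off the two largest letters and needs a six-way case split); stripping off the letter $\pm 1$ suffices here because the statistic being signed is $\Negs$ rather than $\inv_D$. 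The only cosmetic quibble is the sentence ``the extra descent at $0$ in the second case precisely matches the descent at $1$ in the first,'' which is garbled: the correct accounting is that the descent at position $1$ of $\pi$ corresponds to the descent at position $0$ of $\tilde\pi$ in both cases, and $\pi_1 = -1$ additionally forces a descent at position $0$ of $\pi$; the displayed relations you state are nonetheless the right ones.
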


Using Theorem \ref{thm:reiner-sgn-b-des}, we restate Theorem 
\ref{thm:boroweic-mlot-eqns24-25} as follows.

\begin{corollary}
\label{cor:boroweic-mlot-eqns24-25_restate}
For positive integers $n$, 
\begin{equation}
\label{eqn:copy_boroweic_restate}
\BDes^D_n(t) = B_n^+(t) 
\hspace{0.32 cm} \mbox{ and } \hspace{0.32 cm}
\BDes^{B-D}_n(t)= B_n^-(t).
\end{equation}
\end{corollary}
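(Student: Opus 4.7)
The plan is to observe that both sides of each asserted equality admit a closed-form expression as a half-sum of $B_n(t)$ and $(1-t)^n$, and then simply match these expressions. The right-hand sides already come pre-packaged that way in Theorem \ref{thm:boroweic-mlot-eqns24-25}; the work is on the left-hand sides, i.e., on rewriting $\BDes^D_n(t)$ and $\BDes^{B-D}_n(t)$ as $B_n^+(t)$ and $B_n^-(t)$.

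The first step is to split the signed descent polynomial $\SB_n(t)$ according to whether $(-1)^{\inv_B(\pi)}$ is $+1$ or $-1$. Since $\BB_n^+$ and $\BB_n^-$ are precisely the preimages of these two sign values under $\inv_B$, I would write
\[
B_n(t) = B_n^+(t) + B_n^-(t), \qquad \SB_n(t) = B_n^+(t) - B_n^-(t),
\]
directly from the definitions at the end of Section \ref{sec:typeb}. Solving this linear system yields
\[
B_n^+(t) = \tfrac{1}{2}\bigl(B_n(t) + \SB_n(t)\bigr), \qquad B_n^-(t) = \tfrac{1}{2}\bigl(B_n(t) - \SB_n(t)\bigr).
\]

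The second step is to substitute Reiner's identity (Theorem \ref{thm:reiner-sgn-b-des}), $\SB_n(t) = (1-t)^n$, into these expressions, obtaining
\[
B_n^+(t) = \tfrac{1}{2}\bigl(B_n(t) + (1-t)^n\bigr), \qquad B_n^-(t) = \tfrac{1}{2}\bigl(B_n(t) - (1-t)^n\bigr).
\]
Comparing with the two formulas of Theorem \ref{thm:boroweic-mlot-eqns24-25} immediately gives $\BDes^D_n(t) = B_n^+(t)$ and $\BDes^{B-D}_n(t) = B_n^-(t)$, which is the claim.

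There is essentially no obstacle: the corollary is a purely algebraic consequence of two previously stated theorems. The only conceptual point worth flagging is that the equality is a distributional one — the sets $\DD_n$ and $\BB_n^+$ are genuinely different subsets of $\BB_n$ (for instance, $21 \in \DD_2 \setminus \BB_2^+$ while $2\bar{1} \in \BB_2^+ \setminus \DD_2$), but Reiner's identity forces the two sets to have identical $\des_B$-generating functions, and likewise for their complements in $\BB_n$.
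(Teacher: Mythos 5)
Your proof is correct and follows essentially the same route as the paper, which derives the corollary by restating Theorem \ref{thm:boroweic-mlot-eqns24-25} via Reiner's identity $\SB_n(t)=(1-t)^n$ and the decomposition $B_n^{\pm}(t)=\tfrac12(B_n(t)\pm\SB_n(t))$. Your closing remark distinguishing the distributional equality from a set-theoretic one (with the $n=2$ example) is a nice sanity check but adds nothing beyond the paper's argument.
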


\begin{remark}
\label{rem:clt_boroweic_follows}
From Theorem \ref{thm:typeb-clt_exc_plus_minus}
and Remark \ref{rem:Brenti_siva}, it follows that 
the distribution of the coefficients of $\BDes_n^D(t)$ and
$\BDes_n^{B-D}(t)$ are asymptotically normal as $n \to \infty$.
\end{remark}

Mimicking the approach of Borowiec and M\l{}otkowski, suppose 
instead of type B descents, we wish to sum type B excedances
over $\DD_n$.   Similar to \eqref{eqn:boroweic_mlot_original}
we would need to define
$\BExc_{n,k}^D = |\{ \pi \in \DD_n: \exc_B(\pi) = k \}|$ and 
$\BExc_{n,k}^{B-D} = |\{ \pi \in \BB_n - \DD_n: \exc_B(\pi) = k \}|$.
Define
\begin{equation}
\label{eqn:boroweic_mlot_original_exc}
\BExc^D_n(t)  =  \sum_{k=0}^n  \BExc_{n,k}^D t^k
\hspace{0.32 cm} \mbox{ and } \hspace{0.32 cm}
\BExc^{B-D}_n(t)  =  \sum_{k=0}^n \BExc_{n,k}^{B-D} t^k.
\end{equation}

\begin{remark}
\label{rem:boro_mlot-exc_not_new}
It is simple to see that  $\BExc_n^D(t) = \DE_n(t)$ and hence we have
no new results on this ``variant".
\end{remark}

We start work in the next subsection towards proving a CLT for the 
random variable $X_{\des_B}$ over $\DD_n^{\pm}$.

\subsection{Enumerating type B descents over $\DD_n^+$ and $\DD_n^-$}
Let $\pi = \pi_1, \pi_2,\ldots, \pi_n \in \DD_n$.
The following combinatorial definition of type D inversions
is known (see Petersen's 
book \cite[Page 302]{petersen-eulerian-nos-book}): 
$\inv_D(\pi) = \inv_A(\pi) + | \{1 \leq i < j \leq n:  -\pi_i > \pi_j \}|$.
Here $\inv_A(\pi)$ is computed with respect to the usual order
on $\ZZ$.  
Let $\pi \in \BB_n$. We will need the 
following alternate definition of $\inv_B(\pi)$ (see Petersen's
book, \cite[Page 294]{petersen-eulerian-nos-book}):
$\inv_B(\pi) = \inv_A(\pi) + | \{1 \leq i < j \leq n:  -\pi_i > \pi_j \}| + 
|\Negs(\pi)|$.  

Let $\pi \in \DD_n$.  
We can also think of $\pi$ as an element of $\BB_n$.  
Since we have a combinatorial definition of $\inv_B$,
even though $\pi \in \DD_n$, $\inv_B(\pi)$ is defined.  
Similarly, we also
have $\inv_D(\pi)$ when $\pi \in \BB_n$, especially 
when $\pi \in \BB_n-\DD_n$.
From the above definitions of $\inv_B(\pi)$ and $\inv_D(\pi)$, 
the following remark follows.  This appears in 
\cite[Equation (45)]{brenti-q-eulerian-94} of Brenti and 
we will need this later.  

\begin{remark}
	\label{rem:invb_invd_ddn}
	Let $\pi \in \BB_n$.  Then, $\inv_B(\pi) = \inv_D(\pi) + |\Negs(\pi)|$.
\end{remark}

Let $\pi \in \BB_n$.  Define its number of type B ascents 
to be $\asc_B(\pi)= n - \des_B(\pi)$.
We are interested in the following two quantities:
\begin{eqnarray}
\label{eqn:type_b_des_over_Dn_defn}
\SgnBDes^D_n(s,t) &  =  & \sum_{\pi \in \DD_n} (-1)^{\inv_D \pi} s^{\asc_B \pi }t^{\des_B \pi} \\
\label{eqn:type_b_des_over_BnminusDn_defn}
\SgnBDes^{B-D}_n(s,t) & = & \sum_{\pi \in \BB_n - \DD_n} (-1)^{\inv_D \pi} s^{\asc_B \pi }t^{\des_B \pi}. 
\end{eqnarray}

Our proof involves an elaborate sign reversing involution to 
describe which we need a few preliminaries.
Let $\pi \in \DD_n$.
For an index $k \in [n]$, and positive value $r \in [n]$, define
$\pos_r(\pi)=k$ if $\pi_{k} =r$ and 
define $\pos_{-r}(\pi)=k$ if $\pi_{k} =\ob{r}$. 
Moreover, let $\pos_{\pm r}(\pi)=k$ if $\pi_k=r$ or $\pi_k = \ob{r}$.

Let $n \geq 3$. For $\pi=\pi_1,\pi_2, \dots ,\pi_n \in \DD_n$, 
let $\pi''$ be obtained from $\pi$ by deleting the letters $n$ and $n-1$.  
Suppose $\pi \in \DD_n$.  Then, it is easy 
to see that both
$\pi'' \in \DD_{n-2}$ and $\pi'' \in \BB_{n-2} - \DD_{n-2}$ 
are possible.  
From a permutation $\pi'' \in \DD_{n-2}$, to get a permutation 
$\pi \in \DD_n$, we have to insert either 
$n$ and $n-1$ or insert $\ob{n}$ and $\ob{n-1}$.
Similarly, from $\pi'' \in \BB_{n-2}-\DD_{n-2}$, to get 
$\pi \in \DD_n$, we have to insert either 
$n$ and $\ob{n-1}$ or insert $\ob{n}$ and $n-1$.
A similar statement is true  when we want to get $\pi \in \BB_n - \DD_n$ 
from $\pi'' \in \BB_{n-2} - \DD_{n-2}$ or from 
$\pi'' \in \DD_{n-2}$.  

We partition $\DD_n$ into the following $6$ disjoint subsets and will
consider the contribution of each set to $\SgnBDes_n(s,t)$.

\begin{enumerate}
	\item  $\DD_n^1= \{ \pi \in \DD_n :\pi'' \in \DD_{n-2}, 
|\pos_{\pm n}(\pi) - \pos_{\pm n-1}(\pi)|=1 , 
\pi_n \in \{ \pm (n-1), \pm n  \} \}$,
	\item $\DD_n^2= \{ \pi \in \DD_n : \pi'' \in \DD_{n-2}, 
|\pos_{n}(\pi) - \pos_{n-1}(\pi)| =1 , \pi_n \notin \{  n-1,  n  \} \} $,
	\item $\DD_n^3= \{ \pi \in \DD_n :\pi'' \in \DD_{n-2}, 
|\pos_{-n}(\pi) - \pos_{-(n-1)}(\pi)| =1 ,  \pi_n 
\notin \{ \ob{n-1}, \ob{n}  \} \}$ ,
	\item $\DD_n^4= \{ \pi \in \DD_n : \pi'' \in \DD_{n-2},  
|\pos_{n}(\pi) - \pos_{n-1}(\pi)| > 1  \}$ ,
	\item $\DD_n^5= \{ \pi \in \DD_n : \pi'' \in \DD_{n-2},  |\pos_{-n}(\pi) - \pos_{-(n-1)}(\pi)| > 1  \}$ ,
	\item  $\DD_n^6= \{ \pi \in \DD_{n} :\pi'' \in \BB_{n-2}-\DD_{n-2} \}$ .
\end{enumerate}

We prove some preliminary lemmas which we will use later.

\begin{lemma}
\label{lemma:contribution_n_n-1_together_but_not_at_last_coming_from_D}
For positive integers $n \geq 3$, the contribution 
of $\DD_n^2 \cup \DD_n^3 $ to  $\SgnBDes^D_n(s,t)$ is $0$.  That is,   
$$\sum_{\pi \in \DD_n^2 \cup \DD_n^3} 
(-1)^{\inv_D(\pi)}s^{\asc_B(\pi)}t^{\des_B(\pi)}=0$$ 
\end{lemma}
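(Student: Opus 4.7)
The plan is to exhibit a sign-reversing involution $\iota : \DD_n^2 \cup \DD_n^3 \to \DD_n^2 \cup \DD_n^3$ that preserves the joint statistic $(\asc_B, \des_B)$. Given $\pi$ in this union, by definition the pair $\{n, n-1\}$ (both positive, if $\pi \in \DD_n^2$) or $\{-n, -(n-1)\}$ (both negative, if $\pi \in \DD_n^3$) occupies some adjacent positions $(i, i+1)$ with $1 \le i \le n-2$. I define $\iota(\pi)$ to be the signed permutation obtained from $\pi$ by replacing $(\pi_i, \pi_{i+1}) = (a, b)$ by $(-b, -a)$, with all other entries unchanged. This exchanges $(n, n-1) \leftrightarrow (-(n-1), -n)$ and $(n-1, n) \leftrightarrow (-n, -(n-1))$, so $\iota$ swaps $\DD_n^2$ and $\DD_n^3$ and is manifestly an involution.

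For descent preservation, every entry of $\pi$ at a position $j \notin \{i, i+1\}$ lies in $\pm[n-2]$, hence is strictly between $-(n-1)$ and $n-1$. Consequently, every comparison between such an entry and one of $\pm(n-1), \pm n$ is forced, so the only descent positions whose status can change are $i-1, i, i+1$, together with position $0$ in the case $i = 1$ (recall $\des_B$ uses the convention $\pi_0 = 0$). A short case check on the four local patterns confirms that the number of descents inside this window is preserved: for instance $(n, n-1)$ gives the pattern A, D, D at positions $i-1, i, i+1$, while its image $(-(n-1), -n)$ gives D, D, A; and when $i = 1$, the sign flip of $\pi_1$ turns the ascent at position $0$ into a descent, compensating for the ascent that replaces the old descent at position $i+1$.

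For sign reversal, I use the combinatorial formula $\inv_D(\pi) = \inv_A(\pi) + |\{p < q : -\pi_p > \pi_q\}|$. The constraint $\pi_p \in \pm[n-2]$ for $p \notin \{i, i+1\}$ forces the value of every comparison involving position $i$ or $i+1$. Summing the changes contributed by the single pair $(i, i+1)$, the pairs $(p, i)$ and $(p, i+1)$ with $p < i$, and the pairs $(i, q)$ and $(i+1, q)$ with $q > i+1$, I obtain a net shift in $\inv_D$ of $4i - 3$, the same value in all four local patterns. Since $4i - 3$ is odd for every $i \ge 1$, $(-1)^{\inv_D}$ flips under $\iota$, so the contributions of $\pi$ and $\iota(\pi)$ cancel in pairs.

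The main obstacle I foresee is the $i = 1$ boundary case in the descent verification: there the involution changes the sign of $\pi_1$, so the descent that disappears at position $i+1 = 2$ after the swap must be accounted for through the $\pi_0 = 0$ convention at position $0$. Verifying this compensation cleanly across all four local patterns, together with the analogous bookkeeping when $i = n-2$ (where there is no position $i+2$), is where the argument requires some care.
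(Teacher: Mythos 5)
Your involution $(\pi_i,\pi_{i+1})=(a,b)\mapsto(-b,-a)$ is exactly the map $f$ used in the paper, and your verification is correct: the local descent patterns $A,D,D\leftrightarrow D,D,A$ and $A,A,D\leftrightarrow D,A,A$ (including the $\pi_0=0$ boundary case) preserve $\des_B$, and your computed $\inv_D$ increment of $4i-3$ checks out in all four configurations. The only difference is cosmetic: the paper deduces the parity flip of $\inv_D$ from general lemmas on how $\inv_B$ changes under a single sign flip and a transposition (via $\inv_D=\inv_B-|\Negs|$), rather than computing the exact increment directly.
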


\begin{proof}
Let $\pi \in \DD_n^2$.  Suppose  $\pos_{n}(\pi) - \pos_{n-1}(\pi)=-1$. 
Then, $\pi$ has the following form: 
$\displaystyle \pi= \pi_1, \dots, \pi_{i-1},\pi_i= n, \pi_{i+1}=n-1, \pi_{i+2}, \dots, \pi_n.$ 
Define  $f: \DD_n^2 \mapsto \DD_n^3$ by 
$f(\pi)= \pi_1, \dots,\pi_{i-1},\pi_i= -(n-1), \pi_{i+1}=-n, \pi_{i+2}, \dots, \pi_n$. We have
 \begin{eqnarray} 
 \label{eqn:preservinginversion_D}
 \inv_D(f(\pi)) & = & \inv_{B}(f(\pi))-|\Negs(f(\pi))|  \nonumber \\ 
 & \equiv  & \inv_{B}( \pi_1, \dots, \pi_{i-1},\pi_i= n-1, \pi_{i+1}=n, \pi_{i+2}, \dots, \pi_n)-|\Negs(\pi) |  \>  ( \hspace{-4 mm} \mod 2) \nonumber \\
 & \equiv & \inv_B(\pi) -1 \>\>\> (\hspace{-4 mm} \mod 2)
 \equiv \inv_D(\pi) -1 \>\> (\hspace{-4 mm} \mod 2).
 \end{eqnarray}
 The second line above uses the fact that  
 flipping the sign of a single $\pi_i$ changes the parity of the 
 number of type B inversions (see \cite[Lemma 3]{siva-sgn_exc_hyp})
The last line uses the fact that swapping two letters of $\pi$ changes 
the parity of type B inversions.
Moreover, it is easy to check that $\des_B(f(\pi))=\des_B(\pi)$.

When $\pos_{n}(\pi) - \pos_{n-1}(\pi)=1$, 
$\pi$ has the form  $\pi = \pi_1, \dots, \pi_{i-1},\pi_i= n-1, \pi_{i+1}=n, \pi_{i+2}, \dots, \pi_n$.
We define $f(\pi)= \pi_1, \dots, ,\pi_{i-1},\pi_i=\ob{n}, \pi_{i+1}=\ob{n-1}, 
\pi_{i+2}, \dots, \pi_n$.
As done before,  one can check that  $\des_B(f(\pi))= \des_B(\pi)$ 
 and $\inv_D(f(\pi)) - \inv_D(\pi) \equiv  1 \mod 2$. Moreover, $f$ is invertible. The proof is complete. 
\end{proof}

\begin{lemma}
\label{lemma:contribution_n_n-1_not_together_coming_from_D}
For positive integers $n \geq 3$, the contribution of $\DD_n^4$ to  
$\SgnBDes^D_n(s,t)$ is $0$.  That is,   
$$\sum_{\pi \in \DD_n^4}
(-1)^{\inv_D(\pi)}s^{\asc_B(\pi)}t^{\des_B(\pi)}=0$$ 
\end{lemma}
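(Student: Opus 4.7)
The plan is to construct a sign-reversing, statistic-preserving involution $g: \DD_n^4 \to \DD_n^4$. The natural candidate is the map that \emph{swaps the values $n$ and $n-1$}: if $\pi_i = n$ and $\pi_j = n-1$ (with, say, $i < j$), then $g(\pi)$ equals $\pi$ except that position $i$ now carries $n-1$ and position $j$ now carries $n$. Because $\pi \in \DD_n^4$ forces $\pi'' \in \DD_{n-2}$ with both $n$ and $n-1$ inserted positively and with $|i-j|>1$, one checks immediately that $g(\pi) \in \DD_n$ (the set of negatives is untouched), that $g(\pi)'' = \pi''$, that $|\pos_n(g(\pi)) - \pos_{n-1}(g(\pi))| = |j-i| > 1$, and that $g \circ g = \id$. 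So $g$ is an involution on $\DD_n^4$.

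Next I would show that $\des_B$ (and hence $\asc_B = n - \des_B$) is preserved by $g$. The key remark is that every letter of $\pi$ other than $n$ and $n-1$ is obtained from $\pi'' \in \DD_{n-2}$ and therefore has absolute value at most $n-2$. The comparisons entering $\DES_B$ are unchanged at all positions outside $\{i-1, i, j-1, j\}$. At the four remaining positions, the fact that $|i-j| > 1$ guarantees that none of $\pi_{i-1}, \pi_{i+1}, \pi_{j-1}, \pi_{j+1}$ equals $n$ or $n-1$, so each of these comparisons is between a value in $[-(n-2), n-2]$ and one of $\{n, n-1\}$. Both $n$ and $n-1$ strictly dominate any entry in $[-(n-2), n-2]$ (and strictly exceed $\pi_0 = 0$), so the descent/ascent status at each of these four positions is the same for $\pi$ and $g(\pi)$.

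Finally I would track the parity of $\inv_D$ using the combinatorial formula $\inv_D(\pi) = \inv_A(\pi) + |\{a<b : -\pi_a > \pi_b\}|$. Comparing pair by pair, every pair $(a,b)$ with $\{a,b\} \ne \{i,j\}$ contributes identically to $\inv_A(\pi)$ and $\inv_A(g(\pi))$, because such a pair involves at most one of $\{i,j\}$ and the other coordinate has absolute value $\le n-2$, hence is dominated by both $n$ and $n-1$. The pair $(i,j)$ itself contributes an $A$-inversion for $\pi$ (since $n > n-1$) but not for $g(\pi)$, so $\inv_A$ drops by exactly $1$. For the second term, the condition $-\pi_a > \pi_b$ with one of $\{\pi_a, \pi_b\} \in \{n, n-1\}$ would require some other entry to be $< -(n-1)$ or equal to $-n$, which is impossible; thus the second term is unchanged. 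Consequently $\inv_D(g(\pi)) = \inv_D(\pi) - 1$, and so $(-1)^{\inv_D(g(\pi))} = -(-1)^{\inv_D(\pi)}$.

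Pairing each $\pi \in \DD_n^4$ with $g(\pi)$ then shows that the contributions cancel and the total sum is zero. The only delicate step is the invariance of $\des_B$ at positions neighbouring $i$ and $j$, and the gap condition $|i-j|>1$ built into the definition of $\DD_n^4$ is precisely what makes that step work; I do not anticipate any other obstacle.
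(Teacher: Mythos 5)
Your proposal is correct and is essentially the paper's own proof: the paper uses exactly the same involution $g$ that exchanges the values $n$ and $n-1$ in their (non-adjacent, positive) positions, observing that $\des_B$ is preserved while the pair $(i,j)$ flips its inversion status and so the parity of $\inv_D$ changes. Your write-up simply supplies the routine verifications (dominance of $n,n-1$ over all other letters, invariance of the second summand of $\inv_D$) that the paper leaves implicit.
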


\begin{proof}
Let $\pi = \pi_1, \dots,\pi_{i-1},\pi_i= n, \pi_{i+1}, \dots, 
\pi_j=n-1, \dots, \pi_n \in \DD_n^4$. Define $g: \DD_n^4 \mapsto \DD_n^4$ 
by $g(\pi)= \pi_1, \dots, ,\pi_{i-1},\pi_i= n-1, \pi_{i+1}, \dots, 
\pi_j=n, \dots, \pi_n$.  The map $g$ clearly satisfies 
$\des_B(\pi) = \des_B(g(\pi))$ but 
changes the parity of $\inv_D$ as the pair $(i,j)$ flips being
an inversion. The proof is complete. 
\end{proof}

\begin{lemma}
\label{lemma:contribution_minusn_minusn-1_not_together_coming_from_D}
For positive integers $n \geq 3$, the contribution of $\DD_n^5$ to  
$\SgnBDes^D_n(s,t)$ is $0$.   That is,   
$$\sum_{\pi \in \DD_n^5}(-1)^{\inv_D(\pi)}s^{\asc_B(\pi)}t^{\des_B(\pi)}=0.$$ 
\end{lemma}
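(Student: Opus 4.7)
The plan is to mirror the proof of Lemma \ref{lemma:contribution_n_n-1_not_together_coming_from_D} for $\DD_n^4$, with the positive letters $n, n-1$ replaced by their negative counterparts $\ob{n}, \ob{n-1}$. For $\pi \in \DD_n^5$, the positional condition forces both $\ob{n}$ and $\ob{n-1}$ to appear in $\pi$ (not $n$ and $n-1$), since $\pi''\in\DD_{n-2}$ allows only one of the two insertion choices described just before the partition of $\DD_n$. Setting $i = \pos_{-n}(\pi)$ and $j = \pos_{-(n-1)}(\pi)$, I will define $h : \DD_n^5 \to \DD_n^5$ to be the map that exchanges the letters at positions $i$ and $j$, leaving all other entries fixed. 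Since $|i-j|>1$ is unaffected by the swap, $(h(\pi))'' = \pi''$, and $|\Negs(h(\pi))|=|\Negs(\pi)|$, it follows that $h(\pi)\in\DD_n^5$ and $h$ is a fixed-point-free involution.

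Two routine verifications then remain. For $\des_B$: every entry $\pi_k$ with $k\notin\{i,j\}$ satisfies $|\pi_k|\leq n-2$ and is therefore strictly larger than both $-n$ and $-(n-1)$. Hence the at most four neighbor comparisons that could be affected by the swap, namely at positions $(i-1,i),(i,i+1),(j-1,j),(j,j+1)$, produce identical descent indicators before and after, and so $\DES_B(h(\pi))=\DES_B(\pi)$. For the sign: using the combinatorial formula $\inv_D(\pi)=\inv_A(\pi)+|\{a<b:-\pi_a>\pi_b\}|$ quoted just before the partition, the same $|\pi_k|\leq n-2$ bound shows that every ordered pair other than $(i,j)$ contributes identically to both summands before and after the swap, while the pair $(i,j)$ flips status in $\inv_A$ (from non-inversion to inversion) and is unchanged in the second summand. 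Hence $(-1)^{\inv_D(h(\pi))}=-(-1)^{\inv_D(\pi)}$.

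Combining these observations, $h$ preserves $\asc_B$ and $\des_B$ but reverses $(-1)^{\inv_D}$, so pairing each $\pi$ with $h(\pi)$ cancels all contributions and the sum vanishes. The only subtlety worth flagging, and the one point I would double-check carefully, is that the sign statistic here is $\inv_D$ rather than $\inv_B$; the correction term in Remark \ref{rem:invb_invd_ddn} drops out precisely because $h$ preserves $\Negs(\pi)$ (both swapped letters remain negative), so the isolated transposition in $\inv_A$ accounts for the entire parity change. Everything else is bookkeeping completely parallel to the $\DD_n^4$ case.
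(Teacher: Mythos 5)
Your proof is correct and is exactly the argument the paper intends: the paper omits this proof as being ``very similar'' to the $\DD_n^4$ case, and your swap of $\ob{n}$ and $\ob{n-1}$, with the verification that all comparisons against letters of absolute value at most $n-2$ are unaffected and that only the pair $(i,j)$ changes status in $\inv_A$ while the second summand of $\inv_D$ and $\Negs$ are preserved, is that same sign-reversing involution carried out in full. The only nitpick is your parenthetical ``from non-inversion to inversion,'' which holds only when $\ob{n}$ precedes $\ob{n-1}$; in the other order the flip goes the opposite way, but in either case the parity of $\inv_D$ changes, which is all that is needed.
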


The proof of Lemma 
\ref{lemma:contribution_minusn_minusn-1_not_together_coming_from_D} 
is very similar to the proof of Lemma 
\ref{lemma:contribution_n_n-1_not_together_coming_from_D} and so we 
omit it.

\begin{lemma}
\label{lemma:contribution_BMINUSD}
For positive integers $n \geq 3$, the contribution of $\DD_n^6$ to  
$\SgnBDes^D_n(s,t)$ is $0$.  That is,   
$$\sum_{\pi \in \DD_n^6}(-1)^{\inv_D(\pi)}s^{\asc_B(\pi)}t^{\des_B(\pi)}=0$$ 
\end{lemma}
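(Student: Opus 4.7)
The plan is to construct a sign-reversing involution $\phi$ on $\DD_n^6$ which preserves $\des_B$ (and hence $\asc_B = n - \des_B$) while flipping the parity of $\inv_D$; this will pair every $\pi \in \DD_n^6$ with $\phi(\pi)$ so that their weighted contributions cancel termwise.

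For $\pi \in \DD_n^6$, let $p = \pos_{\pm n}(\pi)$ and $q = \pos_{\pm(n-1)}(\pi)$, and write $\pi_p = \epsilon_n n$, $\pi_q = \epsilon_{n-1}(n-1)$ with $\epsilon_n, \epsilon_{n-1} \in \{+1,-1\}$. The hypothesis $\pi'' \in \BB_{n-2} - \DD_{n-2}$ says that exactly one of $\pm n$, $\pm(n-1)$ is negative in $\pi$, i.e.\ $\epsilon_n \neq \epsilon_{n-1}$. I would define $\phi(\pi)$ by keeping every entry outside positions $p$ and $q$ intact, and replacing $\pi_p$ by $\epsilon_n(n-1)$ and $\pi_q$ by $\epsilon_{n-1}n$. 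Then $\phi$ is manifestly an involution and still lies in $\DD_n^6$ (one of the two top absolute values is negative, the other positive, so $\Negs$ has the same parity and the restriction $\pi''$ is unaltered).

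The first verification is that $\des_B(\phi(\pi)) = \des_B(\pi)$. The key observation is that every entry $\pi_k$ with $k \notin \{p,q\}$ satisfies $|\pi_k| \leq n-2$, so for either sign $\epsilon \in \{+1,-1\}$ the comparisons of $\pi_k$ with $\epsilon n$ and with $\epsilon(n-1)$ give the same answer. This handles every descent comparison at a boundary of $\{p,q\}$, including the comparison at position $0$ where $\pi_0=0$. The only remaining case is when $p$ and $q$ are adjacent, where I must compare $\epsilon_n n$ with $\epsilon_{n-1}(n-1)$ before, versus $\epsilon_n(n-1)$ with $\epsilon_{n-1} n$ after; using $\epsilon_n \neq \epsilon_{n-1}$, both inequalities collapse to the single condition ``$\epsilon_n = +1$,'' so the descent status is unchanged.

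The second verification is that $\inv_D(\phi(\pi)) \not\equiv \inv_D(\pi) \pmod 2$. I would use the identity $\inv_D(\pi) = \inv_A(\pi) + |\{k < \ell : \pi_k + \pi_\ell < 0\}|$ recorded earlier in this section. For any $k \notin \{p,q\}$, the bound $|\pi_k| \leq n-2$ again forces both the inversion status of $(k,p)$ and the sign of $\pi_k + \pi_p$ to depend only on $\epsilon_n$; the same holds for $(k,q)$. Hence these pair contributions are invariant under $\phi$. The only pair that can change is $(p,q)$: its inversion status is preserved (both before and after, the condition reduces to $\epsilon_n = +1$), while $\pi_p + \pi_q = \epsilon_n n + \epsilon_{n-1}(n-1) = \epsilon_n$ flips to $\epsilon_n(n-1) + \epsilon_{n-1}n = -\epsilon_n$, toggling whether it is negative. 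Consequently $\inv_D$ changes by exactly one modulo $2$, and $\phi$ is sign-reversing.

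The main obstacle is the descent check in the adjacent case, but the hypothesis $\epsilon_n \neq \epsilon_{n-1}$ collapses both comparisons to the same condition on $\epsilon_n$, so it resolves cleanly without casework.
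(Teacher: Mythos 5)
Your proposal is correct and is essentially the paper's own argument: the paper's involution $h$ likewise keeps the signs and positions fixed while swapping the magnitudes $n$ and $n-1$ between the entry equal to $\pm n$ and the entry equal to $\pm(n-1)$, preserving $\des_B$ and flipping the parity of $\inv_D$. Your write-up simply supplies the verifications (via $|\pi_k|\leq n-2$ and the flip of the sign of $\pi_p+\pi_q$) that the paper leaves as ``easy to check.''
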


\begin{proof}
Let $\pi \in \DD_n^6 $ and $\pi '' \in \BB_{n-2}- \DD_{n-2}$. Thus, the 
one line notation of $\pi$ either contains 
$n$ and $\ob{n-1}$ or contains $\ob{n}$ and $n-1$.  Firstly, 
suppose 
$$\pi = \pi_1, \dots,\pi_{i-1},\pi_i= n, \pi_{i+1}, \dots, 
\pi_j=\ob{n-1}, \dots, \pi_n .$$ 
Define $h: \DD_n^6 \mapsto \DD_n^6$ by 
$h(\pi)= \pi_1, \dots,\pi_{i-1},\pi_i= n-1, \pi_{i+1}, \dots, 
\pi_j=\ob{n}, \dots, \pi_n$.
It is easy to check that $h$ preserves $\des_B$ but changes the 
parity of $\inv_D$. A very similar map can be given if $\pi$ contains
$\ob{n}$ and $n-1$, completing the  proof.
\end{proof}

Thus, the total contribution of the sets $\DD_n^k$ for $k \geq 2$,
to $\SgnBDes^D_n(s,t)$, equals $0$.  Hence
\begin{equation}
\label{eqn:D_equals_D1}
\SgnBDes^D_n(s,t) =   \sum_{\pi \in \DD_n^1} (-1)^{\inv_D \pi} s^{\asc_B \pi }t^{\des_B \pi}. 
\end{equation}

\begin{theorem}
\label{thm:main_theorem_signed_desB_enumerate_over_type_d}
For positive integers $n \geq 2$, the following recurrence relations hold: 
\begin{eqnarray}
\label{eqn:recurrence_by_jump_2_D}
\SgnBDes^{D}_n(s,t) & = & (s-t)^2 \SgnBDes^D_{n-2}(s,t), \\
\label{eqn:recurrence_by_jump_2_B-D}
\SgnBDes^{B-D}_n(s,t) & = &  (s-t)^2 \SgnBDes^{B-D}_{n-2}(s,t).
\end{eqnarray}
We thus have 
\begin{eqnarray}
\label{eqn:first_imp}
\SgnBDes^{D}_n(s,t)& = &\begin{cases}
(s-t)^n & \text {when $n$ is even },\\
s(s-t)^{n-1} & \text {when $n$ is odd }.
\end{cases}   \\
\label{eqn:second_imp}
\SgnBDes^{B-D}_n(s,t) & =& \begin{cases}
0 & \text {when $n$ is even },\\
t(s-t)^{n-1} & \text {when $n$ is odd }.
\end{cases}
\end{eqnarray}

\end{theorem}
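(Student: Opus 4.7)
The plan is to prove the two recurrences first and then iterate to the closed forms. By \eqref{eqn:D_equals_D1}, which follows from Lemmas \ref{lemma:contribution_n_n-1_together_but_not_at_last_coming_from_D} through \ref{lemma:contribution_BMINUSD}, the only surviving contribution to $\SgnBDes^D_n(s,t)$ comes from $\pi \in \DD_n^1$. I would fix $\sigma \in \DD_{n-2}$ and enumerate the four extensions of $\sigma$ lying in $\DD_n^1$, namely appending one of the pairs $(n-1,n)$, $(n,n-1)$, $(\ob{n-1},\ob{n})$, $(\ob{n},\ob{n-1})$ at positions $n-1$ and $n$. Since $\pm n$ and $\pm(n-1)$ are extremal among $\{-n,\ldots,-1,1,\ldots,n\}$, the two new comparisons for $\des_B/\asc_B$ at positions $n-2$ and $n-1$ are forced and yield the descent/ascent weights $s^2$, $st$, $t^2$, $st$ in the four cases respectively.

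Next I would track $\inv_D$ modulo $2$ in each case via the formula $\inv_D(\pi)=\inv_A(\pi)+|\{i<j:\,-\pi_i>\pi_j\}|$. A direct count of new pairs contributed at positions $n-1, n$ shows that cases 1 and 3 add an even number of new inversions while cases 2 and 4 add an odd number, producing the sign pattern $+,-,+,-$. Summing, each $\sigma \in \DD_{n-2}$ contributes
$$(s^2 - st + t^2 - st)\,(-1)^{\inv_D(\sigma)} s^{\asc_B(\sigma)} t^{\des_B(\sigma)} = (s-t)^2 \cdot (-1)^{\inv_D(\sigma)} s^{\asc_B(\sigma)} t^{\des_B(\sigma)},$$
which gives \eqref{eqn:recurrence_by_jump_2_D}.

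To prove \eqref{eqn:recurrence_by_jump_2_B-D}, I would mirror the entire setup on $\BB_n - \DD_n$: partition this set into six analogs of $\DD_n^1,\ldots,\DD_n^6$, with the parity of $\Negs(\pi'')$ flipped throughout, establish cancellations by the same sign-reversing involutions as in Lemmas \ref{lemma:contribution_n_n-1_together_but_not_at_last_coming_from_D}--\ref{lemma:contribution_BMINUSD}, and reduce to the analog of $\DD_n^1$ in which $\pi'' \in \BB_{n-2} - \DD_{n-2}$. The same four-way case analysis produces the same $(s-t)^2$ factor. Finally I would iterate both recurrences down to base cases: direct computation gives $\SgnBDes^D_0 = 1$, $\SgnBDes^D_1 = s$ (coming from $\id \in \DD_1$), $\SgnBDes^{B-D}_0 = 0$, and $\SgnBDes^{B-D}_1 = t$ (coming from $\ob{1} \in \BB_1 - \DD_1$), yielding \eqref{eqn:first_imp} and \eqref{eqn:second_imp}.

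The main obstacle is the parity bookkeeping in cases 3 and 4, where appending two negative values produces on the order of $4(n-2)$ new contributions to $\inv_D$ split between $\inv_A$ and the negation-pair sum; one must verify that these cancel modulo $2$ so that the clean sign pattern $(+,-,+,-)$ emerges. Once this is handled, the algebraic identity $s^2 - 2st + t^2 = (s-t)^2$ takes over and the rest is routine iteration.
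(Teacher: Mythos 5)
Your proposal is correct and follows essentially the same route as the paper: reduce to $\DD_n^1$ via the cancellation lemmas, enumerate the four extensions of each $\sigma \in \DD_{n-2}$ by the pairs at the last two positions, verify the weight pattern $s^2, st, t^2, st$ with signs $+,-,+,-$ to extract the factor $(s-t)^2$, mirror the argument on $\BB_n - \DD_n$, and iterate from base cases. The parity bookkeeping you flag as the main obstacle does work out exactly as you predict (the two all-negative cases contribute $4(n-2)+2$ and $4(n-2)+1$ new inversion counts respectively), and your $n=0,1$ base cases are equivalent to the paper's $n=1,2$ ones.
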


\begin{proof}
We consider \eqref{eqn:recurrence_by_jump_2_D} first.
Each $\pi=\pi_1,\pi_2,\dots,\pi_{n-2} \in \DD_{n-2}$ gives 
rise to the following four permutations
$\psi_1$, $\psi_2$, $\psi_3$ and $\psi_4$ in $\DD_n^1$: 
\begin{eqnarray*}
\psi_1 = \pi_1,\pi_2,\dots, \pi_{n-2},n-1,n, \hspace{2 cm}  & & 
\psi_2 = \pi_1,\pi_2,\dots, \pi_{n-2},n,n-1, \\
\psi_3 = \pi_1,\pi_2,\dots, \pi_{n-2},\ob{n},\ob{n-1}, \hspace{2 cm} & & 
\psi_4 = \pi_1,\pi_2,\dots, \pi_{n-2},\ob{n-1},\ob{n}.
\end{eqnarray*}

It is simple to note that 

\begin{enumerate}
	\item $\des_B(\psi_1 )= \des_B(\pi)$ and $\inv_D(\psi_1) - \inv_D(\pi) \equiv 0 \mod 2$.
	
	\item $\des_B(\psi_i )= \des_B(\pi)+1$ and 
	$\inv_D(\psi_i) - \inv_D(\pi) \equiv 1 \mod 2$ when $i=2,3$.
	
	
	\item $\des_B(\psi_4 )= \des_B(\pi)+2$ and 
	$\inv_D(\psi_4) - \inv_D(\pi) \equiv 0 \mod 2$.
\end{enumerate}

Thus, we get
\begin{eqnarray*}
\SgnBDes^D_n(s,t) & = &	 
\sum_{\pi \in \DD_n^1} (-1)^{\inv_D \pi} s^{\asc_B \pi }t^{\des_B \pi} \\
& = & (s^2-2st+t^2) \sum_{\pi \in \DD_{n-2}} (-1)^{\inv_D \pi} s^{\asc_B \pi }t^{\des_B \pi} 
 =  (s-t)^2 \SgnBDes^D_{n-2}(s,t)
\end{eqnarray*}

Variants of Lemmas 
\ref{lemma:contribution_n_n-1_together_but_not_at_last_coming_from_D},
\ref{lemma:contribution_n_n-1_not_together_coming_from_D},
\ref{lemma:contribution_minusn_minusn-1_not_together_coming_from_D} and
\ref{lemma:contribution_BMINUSD}
can be proved for $\BB_n - \DD_n$ by defining $(\BB_n- \DD_n)^k$ for
$1 \leq k \leq 6$.
Using these, in a similar manner, one can 
prove \eqref{eqn:recurrence_by_jump_2_B-D}.
One can check the following base cases: 
$\SgnBDes^D_1(s,t)=s$,  $\SgnBDes^D_2(s,t)=(s-t)^2$ and 
$\SgnBDes^{B-D}_1(s,t)=t$, $\SgnBDes^{B-D}_2(s,t)=0$.  
Using these with 
\eqref{eqn:recurrence_by_jump_2_D} and \eqref{eqn:recurrence_by_jump_2_B-D}
completes the proof. 
\end{proof}

We mention two uses of 
Theorem \ref{thm:main_theorem_signed_desB_enumerate_over_type_d} before
moving on to the proof of CLTs.
Firstly, subtracting \eqref{eqn:second_imp} from 
\eqref{eqn:first_imp}, we get an alternative proof of a 
bivariate version of Theorem \ref{thm:reiner-sgn-b-des}.

\begin{theorem}[Reiner]
\label{thm:reiner}
For positive integers $n$, $\sum_{\pi \in \BB_n} (-1) ^{\inv_B(\pi)} s^{\asc_B(\pi)}t^{\des_B(\pi)}= (s-t)^n.$ 
\end{theorem}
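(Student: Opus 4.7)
The plan is to derive this identity by combining the two formulas \eqref{eqn:first_imp} and \eqref{eqn:second_imp} proved in Theorem \ref{thm:main_theorem_signed_desB_enumerate_over_type_d}, using Remark \ref{rem:invb_invd_ddn} to convert $\inv_D$ into $\inv_B$.

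First I would split the sum over $\BB_n$ according to $\BB_n = \DD_n \sqcup (\BB_n - \DD_n)$. On $\DD_n$, $|\Negs(\pi)|$ is even, so Remark \ref{rem:invb_invd_ddn} gives $(-1)^{\inv_B(\pi)} = (-1)^{\inv_D(\pi)}$; on $\BB_n - \DD_n$, $|\Negs(\pi)|$ is odd, so $(-1)^{\inv_B(\pi)} = -(-1)^{\inv_D(\pi)}$. Consequently,
\[
\sum_{\pi \in \BB_n} (-1)^{\inv_B(\pi)} s^{\asc_B(\pi)}t^{\des_B(\pi)} = \SgnBDes^{D}_n(s,t) - \SgnBDes^{B-D}_n(s,t).
\]

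Next I would substitute the closed forms from \eqref{eqn:first_imp} and \eqref{eqn:second_imp}. When $n$ is even, the right side equals $(s-t)^n - 0 = (s-t)^n$. When $n$ is odd, it equals $s(s-t)^{n-1} - t(s-t)^{n-1} = (s-t)^n$. In either parity, we obtain $(s-t)^n$, which is the claimed identity.

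There is essentially no obstacle here: all the work is already packaged in Theorem \ref{thm:main_theorem_signed_desB_enumerate_over_type_d}, whose proof via the sign-reversing involution on $\DD_n^2 \cup \cdots \cup \DD_n^6$ (and the analogous sets in $\BB_n - \DD_n$) is the only delicate part. Once one has the type D and ``$B - D$'' pieces separately, the recovery of the type B generating function is just the sign flip forced by the parity of $\Negs(\pi)$, followed by a one-line algebraic simplification.
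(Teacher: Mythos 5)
Your proof is correct and is exactly the paper's argument: the paper derives Theorem \ref{thm:reiner} by "subtracting \eqref{eqn:second_imp} from \eqref{eqn:first_imp}," which is precisely your decomposition $\BB_n = \DD_n \sqcup (\BB_n - \DD_n)$ with the sign flip from Remark \ref{rem:invb_invd_ddn}. Nothing further is needed.
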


Next, as mentioned in Remark \ref{rem:Brenti_siva}, we have 
$B_n^+(t) = \BE^+_n(t)$ and $B_n^-(t) = \BE^-_n(t)$.  Using
Theorem \ref{thm:main_theorem_signed_desB_enumerate_over_type_d},
we get the following further refinement which we record below.

\begin{remark}
\label{rem:excb_desb_equidistrib_over_several_sets}
From \cite[Remark 26]{siva-dey-gamma_positive_excedances_alt_group}
of Dey and Sivasubramanian,
we see that $\des_B$ and $\exc_B$ are equidistributed over 
$\DD_n$ and hence over $\BB_n - \DD_n$.
Theorem \ref{thm:siva-sgn-d-exc} and \eqref{eqn:first_imp} of 
Theorem 
\ref{thm:main_theorem_signed_desB_enumerate_over_type_d} show
that $\sgnDE_n(t) = \SgnBDes^{D}_n(1,t)$.  Thus, we get
$\DE_n^+(t) = \SgnBDes^{D,+}_n(1,t)$ and 
$\DE_n^-(t) = \SgnBDes^{D,-}_n(1,t)$.  That is, 
$\des_B$ and $\exc_B$ are equidistributed over $\DD_n^{\pm}$.
One can show in a similar manner that
$\des_B$ and $\exc_B$ are equidistributed
over $(\BB_n - \DD_n)^{\pm}$.
\end{remark}

\begin{remark}
Borowiec and M\l{}otkowski in 
\cite[Proposition 4.7]{boroweic-mlotkowski-new-eulerian-type-d}
show another signed enumeration result which is similar to Theorem
\ref{thm:main_theorem_signed_desB_enumerate_over_type_d},
but is different from it.
\end{remark}

\subsection{Fixed points of the involution in the proof of Theorem 
\ref{thm:main_theorem_signed_desB_enumerate_over_type_d}}
It is clear that our proof of Theorem
\ref{thm:main_theorem_signed_desB_enumerate_over_type_d} is a 
sign reversing involution, though it is described in several parts.
The set of fixed points of this involution is thus a natural
question which we consider next.

We claim that the following two families of sets 
$L_n \subseteq \DD_n $ and $M_n \subseteq\BB_n - \DD_n$ 
are those which survives the cancellations in our proof. 
We first define these sets inductively.
Let $L_1= \DD_1$, $L_2= \DD_2$,  $M_1=\BB_1-\DD_1$, and for 
even positive integers $n$, let $M_{n}= \emptyset$.
We define the sets $L_n$ for all positive integers $n\geq 3$ 
and $M_n$ for odd integers $n\geq 3$. 
Consider $\pi=\pi_1, \pi_2, \dots, \pi_n \in L_{n-2}$.  
Using $\pi$, we form four  signed permutations 
$\tau_1$, $\tau_2$, $\tau_3$ and $\tau_4$  $\in L_n$ as follows: 
\begin{eqnarray*}
\tau_1 &= &\pi_1,\pi_2,\dots, \pi_{n-2},n-1,n, 
	\hspace{2 cm}	\tau_2 = \pi_1,\pi_2,\dots, \pi_{n-2},n,n-1, \\
\tau_3 &= &\pi_1,\pi_2,\dots, \pi_{n-2},\ob{n},\ob{n-1}, 
	\hspace{2 cm} \tau_4 = \pi_1,\pi_2,\dots, \pi_{n-2},\ob{n-1},\ob{n}.
\end{eqnarray*}

We do the same construction to get $M_n$ from $M_{n-2}$ for odd $n$.
Clearly, $L_n \subseteq \DD_n^1$  for positive integers $n$ and 
$M_n \subseteq (\BB_n - \DD_n)^1$ for odd positive integers $n$.
Further, it is clear that
$|L_{n}| = 2^n$ when $n$ is even and $|L_n|=2^{n-1} $ when $n$ is odd.
Further, is it simple to see that the elements of $L_n$ have the following 
property.  
Recall that for $\pi \in \DD_n$, $\pi''$ is obtained by deleting 
the two highest elements in absolute value.
For $\pi \in L_n$, let
$\pi_1 = \pi''$.  Then $\pi_1 \in L_{n-2}$.  Next, let  $\pi_2=\pi_1''$.  
Then $\pi_2 \in L_{n-4}$ and so on till $\pi_{\floor{n/2}}$.

We want to show that $L_n$ is the set of permutations that survive 
cancellations.  We do this when $n$ is even.  The argument is very
similar when $n$ is odd.
For even $n$, let $\pi \in \DD_n^1$.  Denote $\pi_{n/2} = \pi$.
Let $\pi_{n/2-1} = \pi_{n/2}''$, $\pi_{n/2-2} = \pi_{n/2-1}''$ and so on.
Consider the smallest positive integer $r$ such that either
$\pi_{n/2-r} \not \in L_{n-2r}$.   

As mentioned above, it is clear that if there is no such $r$, 
then $\pi \in L_n$.
Suppose such an $r$ exists.  Suppose $\pi_{n/2-r} \not \in L_{n-2r}$. 
The contribution of $\pi_{n/2-r}$ will 
cancel with the contribution of some other $\psi \in \DD_{n/2-r}^p$ 
for some $p$.  Padding $\psi$  up
with the same sequence of $2r$ elements that we removed 
from $\pi_{n/2}$ gives an element $\sigma \in \DD_n^1$ which cancels with
$\pi$.  Two points need to be checked and both are easy.  The first
point is to see why $\sigma \in \DD_n^1$. This is straightforward 
from the definition.  The second point is to see that $\pi$ and $\sigma$ 
have opposite signs but have the same $\des_B$ value.  This is also
easy to see.

Finally, it is straightforward to check that
$\sum_{\pi \in L_n} (-1)^{\inv_D \pi} s^{\asc_B \pi }t^{\des_B \pi} = 
(s-t)^n$ when $n$ is even and that $\sum_{\pi \in L_n} (-1)^{\inv_D \pi} 
s^{\asc_B \pi }t^{\des_B \pi}= s(s-t)^{n-1}$ when $n$ is odd.
Similarly, it is not hard to show when $n$ is odd, that, 
$\sum_{\pi \in M_n} (-1)^{\inv_D \pi} s^{\asc_B \pi }t^{\des_B \pi}
= t(s-t)^{n-1}$.
When $n$ is even, clearly 
$\sum_{\pi \in M_n} (-1)^{\inv_D \pi} s^{\asc_B \pi }t^{\des_B \pi}= 0$.

\subsection{CLT results over $\DD_n^{\pm}$ and $(\BB_n-\DD_n)^{\pm}$}
\label{subsec:clt-boroweic-variant}
In this subsection, we give our CLT results 
for the random variable $X_{\des_B}$ over $\DD_n^{\pm}$ and
$(\BB_n-\DD_n)^{\pm}$.
Let 
$\BDes_{n,k}^{D,+} = |\{ \pi \in \DD_n^+: \des_B(\pi) = k \}|$ and 
$\BDes_{n,k}^{D,-} = |\{ \pi \in \DD_n^-: \des_B(\pi) = k \}|$.  Further,
let 
$\BDes_{n,k}^{B-D,+} = |\{ \pi \in (\BB_n - \DD_n)^+: \des_B(\pi) = k \}|$ and
$\BDes_{n,k}^{B-D,-} = |\{ \pi \in (\BB_n - \DD_n)^-: \des_B(\pi) = k \}|$.
Let

\begin{eqnarray}
\label{eqn:type_b_des_over_Dn_pm_defn}
\BDes^{D,+}_n(t) &  = &  \sum_{k=0}^n  \BDes_{n,k}^{D,+} t^k
\hspace{8 mm} \mbox{ and } \hspace{3 mm}
\BDes^{D,-}_n(t)  =  \sum_{k=0}^n  \BDes_{n,k}^{D,-} t^k, \\
\label{eqn:type_b_des_over_BnminusDn_pm_defn}
\BDes^{B-D,+}_n(t)  & =  & \sum_{k=0}^n \BDes_{n,k}^{B-D,+} t^k
\hspace{3 mm} \mbox{ and } \hspace{3 mm}
\BDes^{B-D,-}_n(t)  =  \sum_{k=0}^n \BDes_{n,k}^{B-D,-} t^k.
\end{eqnarray}
We are now ready to prove our CLT results.  

\begin{theorem}
\label{thm:typed-clt_bdes_plus_minus}
The distribution of the coefficients of $\BDes_n^{D,\pm}(t)$ 
and $\BDes_n^{B-D,\pm}(t)$ are asymptotically
normal as $n \to \infty$.  Over $\DD_n^{\pm}$, the random variable
$X_{\des_B}$ has mean $n/2$ when $n \geq 3$, and has
variance $(n+1)/12$ when $n \geq 4$.  Over $(\BB_n - \DD_n)^{\pm}$,
the random variable $X_{\des_B}$ has mean $n/2$ when $n \geq 3$ 
and variance $(n+1)/12$ when $n \geq 4$.  
\end{theorem}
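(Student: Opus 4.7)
The plan is to apply Lemma \ref{lem:connection_between_all_and_plus} in exactly the same way as in the proofs of Theorems \ref{thm:clt_exc_plus_minus}, \ref{thm:typeb-clt_exc_plus_minus}, and \ref{thm:typed-clt_exc_plus_minus}. The two ingredients needed are the signed enumeration identity of Theorem \ref{thm:main_theorem_signed_desB_enumerate_over_type_d}, which controls the ``error'' between the signed and unsigned polynomials, together with Remark \ref{rem:clt_boroweic_follows}, which supplies a CLT for the unsigned polynomials $\BDes_n^D(t)$ and $\BDes_n^{B-D}(t)$ via Corollary \ref{cor:boroweic-mlot-eqns24-25_restate} and Theorem \ref{thm:typeb-clt_exc_plus_minus}.

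Concretely, I would first observe the sieving identity
\[
\BDes_n^{D,\pm}(t) = \frac{1}{2}\Big(\BDes_n^D(t) \pm \SgnBDes_n^D(1,t)\Big).
\]
Setting $s=1$ in equation \eqref{eqn:first_imp} gives $\SgnBDes_n^D(1,t) = (1-t)^n$ for even $n$ and $(1-t)^{n-1}$ for odd $n$; in either case the signed contribution is divisible by $(1-t)^{n-1}$. Hence Lemma \ref{lem:connection_between_all_and_plus}, applied with $\lambda = 1/2$ and $\ell_n = n-1$, forces the first $n-2$ factorial moments of $\BDes_n^{D,\pm}(t)$ to coincide with those of $\BDes_n^D(t) = B_n^+(t)$ (the equality coming from Corollary \ref{cor:boroweic-mlot-eqns24-25_restate}). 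By Theorem \ref{thm:typeb-clt_exc_plus_minus}, the latter polynomial is asymptotically normal with mean $n/2$ (for $n \geq 2$) and variance $(n+1)/12$ (for $n \geq 3$), so the method of moments supplies the desired CLT for $\BDes_n^{D,\pm}(t)$, with matching mean once $n-2 \geq 1$ (i.e.\ $n \geq 3$) and matching variance once $n-2 \geq 2$ (i.e.\ $n \geq 4$).

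The argument for $\BDes_n^{B-D,\pm}(t)$ is entirely parallel. Writing
\[
\BDes_n^{B-D,\pm}(t) = \frac{1}{2}\Big(\BDes_n^{B-D}(t) \pm \SgnBDes_n^{B-D}(1,t)\Big)
\]
and specializing $s=1$ in equation \eqref{eqn:second_imp} shows that the signed piece is either $0$ (even $n$) or $t(1-t)^{n-1}$ (odd $n$), so again divisible by $(1-t)^{n-1}$. Since $\BDes_n^{B-D}(t) = B_n^-(t)$ is asymptotically normal with the same mean and variance by Theorem \ref{thm:typeb-clt_exc_plus_minus}, Lemma \ref{lem:connection_between_all_and_plus} followed by the method of moments delivers the CLT with identical thresholds $n \geq 3$ and $n \geq 4$. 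There is no real obstacle here: once Theorem \ref{thm:main_theorem_signed_desB_enumerate_over_type_d} is in hand, this theorem is obtained by the same template of moment matching that has already carried every previous CLT in the paper; the only minor point requiring a sentence of verification is that taking $\ell_n = n-1$ uniformly (rather than exploiting the stronger $\ell_n = n$ available in the even case) is plenty for the moment method to fire.
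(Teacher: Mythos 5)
Your proposal is correct and follows essentially the same route as the paper: the same sieving identity $\BDes_n^{D,\pm}(t)=\tfrac12\bigl(B_n^{+}(t)\pm\SgnBDes_n^{D}(1,t)\bigr)$ (and its $B-D$ analogue), the specialization $s=1$ of Theorem \ref{thm:main_theorem_signed_desB_enumerate_over_type_d}, Lemma \ref{lem:connection_between_all_and_plus} with $\ell_n=n-1$ uniformly in the parity of $n$, and the method of moments against the CLT for $B_n^{\pm}(t)$ from Theorem \ref{thm:typeb-clt_exc_plus_minus}. The thresholds $n\ge 3$ for the mean and $n\ge 4$ for the variance are also derived exactly as in the paper.
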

\begin{proof}
We first consider $X_{\des_B}$ over $\DD_n^{\pm}$. Let $n \geq 1$. 
Using Theorem \ref{thm:boroweic-mlot-eqns24-25} and Theorem 
\ref{thm:main_theorem_signed_desB_enumerate_over_type_d}, we get  

\begin{eqnarray}
\BDes_n^{D,\pm}(t)  & =  & 
\frac{1}{2} \Big( \BDes_n^D(t) \pm \SgnBDes^D_n(1,t) \Big)  = 
\frac{1}{2} \Big( B_n^+(t) \pm \SgnBDes^D_n(1,t) \Big)  
\nonumber \\
& =& 
\label{eqn:Bdes_pm_expression}
\begin{cases}
\frac{1}{2}\Bigg( B_n^+(t) \pm (1-t)^n \Bigg) & \text {if $n$ is even, }\\
\frac{1}{2}\Bigg( B_n^+(t) \pm (1-t)^{n-1} \Bigg) & \text {if $n$ is odd. }
\end{cases} 
\end{eqnarray}

Similarly, we have
\begin{eqnarray}
\BDes_n^{B-D,\pm}(t)  & =  & 
\frac{1}{2} \Big( \BDes_n^{B-D}(t) \pm \SgnBDes^{B-D}_n(1,t) \Big)  =
\frac{1}{2} \Big( B_n^-(t) \pm \SgnBDes^{B-D}_n(1,t) \Big)  
\nonumber \\
& =& 
\label{eqn:B-Ddes_pm_expression}
\begin{cases}
\frac{1}{2} B_n^-(t)   & \text {if $n$ is even, }\\
\frac{1}{2} \Bigg( B_n^-(t) \pm t(1-t)^{n-1} \Bigg) & \text {if $n$ is odd. }
\end{cases} 
\end{eqnarray}

Thus, irrespective of the parity of $n$,
by Lemma \ref{lem:connection_between_all_and_plus}, the first
$n-2$ moments of $\BDes_n^{D,\pm}(t)$ are identical to the
first $n-2$ moments of $B_n^+(t)$.  Since the coefficients of 
$B_n^+(t)$ are asymptotically normal, by the method of moments, 
so are the coefficients of $\BDes_n^{D,\pm}(t)$.  Further, they
have the same expected value as $B_n^+(t)$ when $n \geq 3$ 
and the same variance as $B_n^+(t)$ when $n \geq 4$.  
The proof for a CLT for $X_{\des_B}$ over $(\BB_n - \DD_n)^{\pm}$ 
is very similar and is hence omitted.
\end{proof}

\begin{remark}
By Remark \ref{rem:excb_desb_equidistrib_over_several_sets},
similar to Theorem \ref{thm:typed-clt_bdes_plus_minus},
one can get CLTs for $X_{\exc_B}$ over $\DD_n^{\pm}$ and 
$(\BB_n - \DD_n)^{\pm}$.
\end{remark}

\section{Carlitz Identities involving even and odd Excedance based 
Eulerian Polynomials}		
\label{sec:powerseriesidentities}	

In this Section, we prove Carlitz type identities which refine 
various known identities associated with Eulerian polynomials arising
from Coxeter groups.  Our identities are obtained using signed
enumeration results that either exists in the literature or
those proved in Section \ref{sec:boroweic_mlot_refine}.  We begin
with our proof of Theorem \ref{thm:carlitz-identity_exc_typeA}.

\begin{proof} (Of Theorem \ref{thm:carlitz-identity_exc_typeA})
By Theorem \ref{thm:carlitz-identity} and Theorem \ref{thm:mantaci}, 
we have 
\begin{eqnarray*}
\label{eqn:powerseriesidentitySnidentity2}
\frac{\AAE_n(t)}{(1-t)^{n+1}}  & = &   \sum_{k \geq 0}(k+1)^nt^k  
\hspace{0.5 cm} \mbox{ and } \hspace{0.5 cm}
\frac{\SgnAAE_n(t)}{(1-t)^{n+1}}  =  \sum_{k \geq 0}(k+1)t^k. \mbox{ Thus, } \\
\frac{\AAE_n^{\pm}(t)}{(1-t)^{n+1}}  & = & \sum_{k \geq 0}
\frac{1}{2} \Big( (k+1)^n \pm (k+1) \Big)t^k.
\end{eqnarray*}
The 
proof is complete.
\end{proof}

Theorem \ref{thm:carlitz-identity_des_typeA} and 
Theorem \ref{thm:carlitz-identity_exc_typeA} are 
two different refinements of Theorem 
\ref{thm:carlitz-identity}.  This stems from the fact 
that when $n \geq 3$, $A_n^+(t) \not= \AAE_n^+(t)$.

\subsection{Type B Carlitz identities involving $\BE_n^{\pm}(t)$}
\label{subsec:typeb_worp}

Type B analogues of the Carlitz identity are also known.  
We start with the following
result of Brenti \cite[Theorem 3.4]{brenti-q-eulerian-94}.

\begin{theorem}[Brenti]
\label{thm:carlitz-identity_typeB}
Let $B_n(t)$ be the type B Eulerian polynomial defined in
\eqref{eqn:Eulerian_poly_B_and_Dn_defn}. Then,
\begin{equation}
\label{carlitz-typeb}
\frac{B_n(t)}{(1-t)^{n+1}} = \sum_{k \geq 0} (2k+1)^nt^k.
\end{equation}
\end{theorem}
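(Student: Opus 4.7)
My plan is to derive the Carlitz identity from a type B Worpitzky-style expansion,
$$(2k+1)^n = \sum_{j=0}^n B_{n,j}\binom{n+k-j}{n}, \qquad k=0,1,2,\ldots,$$
and then sum against $t^k$. Granted this identity, multiplying both sides by $t^k$, summing over $k\ge 0$, and using $\sum_{k\ge 0}\binom{n+k-j}{n}t^k = t^j/(1-t)^{n+1}$ immediately yields
$$\sum_{k\ge 0}(2k+1)^n t^k = \sum_{j=0}^n B_{n,j}\,\frac{t^j}{(1-t)^{n+1}} = \frac{B_n(t)}{(1-t)^{n+1}},$$
which is the claim.

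To establish the Worpitzky-style expansion, I would construct a bijection between the set of signed words $f\colon[n]\to\{-k,-k+1,\ldots,k\}$, which has cardinality $(2k+1)^n$, and the disjoint union, indexed by $\pi\in\BB_n$, of sets of cardinality $\binom{n+k-\des_B(\pi)}{n}$. Given $f$, let $\pi\in\BB_n$ be the standardization of $f$: sort the entries with respect to the natural order on $\{-k,\ldots,k\}$, break ties by original index, and set $\pi_i$ negative precisely when the corresponding $f$-value is negative. The convention $\pi_0=0$ in the definition of $\des_B$ corresponds to an implicit initial comparison of each signed value with $0$, which is exactly the feature that produces the total count $2k+1$ per position rather than the type A count $k+1$. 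The residual data is then a weakly monotone non-negative integer sequence of length $n$ bounded above by $k$, whose strict increases are forced precisely at the positions of $\DES_B(\pi)$; a stars-and-bars count yields $\binom{n+k-\des_B(\pi)}{n}$ such sequences.

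The main obstacle will be calibrating the tie-breaking rule and sign convention so that the strict-increase constraints on the residual sequence align exactly with $\DES_B(\pi)$; a naive setup will produce an off-by-one error in the binomial coefficient. Should the bijective argument prove cumbersome, a clean alternative is to verify the identity via exponential generating functions. One computes directly
$$\sum_{n\ge 0}\Big(\sum_{k\ge 0}(2k+1)^n t^k\Big)\frac{z^n}{n!} = \sum_{k\ge 0} t^k e^{(2k+1)z} = \frac{e^z}{1-te^{2z}},$$
and the known EGF $\sum_{n\ge 0}B_n(t)u^n/n! = (1-t)e^{u(1-t)}/(1-te^{2u(1-t)})$ becomes, under the substitution $u=z/(1-t)$, equal to $\sum_{n\ge 0}B_n(t)z^n/((1-t)^n n!) = (1-t)e^z/(1-te^{2z})$. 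Dividing by $(1-t)$ and matching coefficients of $z^n/n!$ then yields the theorem.
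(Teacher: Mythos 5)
The paper does not prove this statement at all: it is imported verbatim as \cite[Theorem 3.4]{brenti-q-eulerian-94}, so there is no internal proof to compare against. Your argument is therefore a genuine addition rather than a variant, and its overall structure is sound. The reduction is airtight: the Worpitzky-type identity $(2k+1)^n=\sum_{j=0}^n B_{n,j}\binom{n+k-j}{n}$ (which checks out, e.g.\ $n=2$ gives $\binom{k+2}{2}+6\binom{k+1}{2}+\binom{k}{2}=(2k+1)^2$), together with $\sum_{k\ge 0}\binom{n+k-j}{n}t^k=t^j/(1-t)^{n+1}$ for $0\le j\le n$, immediately yields \eqref{carlitz-typeb}. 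Two caveats on how you discharge the key identity. First, the bijective sketch is where all the content lives, and you have only gestured at it: the decomposition of a signed word into its standardization $\pi\in\BB_n$ plus a weakly increasing sequence $0\le c_1\le\cdots\le c_n\le k$ with strict increases forced at positions of $\DES_B(\pi)$ (including the forced $c_1\ge 1$ when $0\in\DES_B(\pi)$, which is what accounts for the extra $+1$ in $2k+1$) does work and gives the count $\binom{n+k-\des_B(\pi)}{n}$, but you acknowledge yourself that the tie-breaking and sign calibration is the delicate part; as written this is a plan, not a proof. This route is essentially the one carried out in \cite{worpitzk-identity-signed-even-signed}, which the paper cites. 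Second, the EGF fallback is computationally correct, but be careful about circularity: in \cite{brenti-q-eulerian-94} the exponential generating function $\sum_n B_n(t)u^n/n!=(1-t)e^{u(1-t)}/(1-te^{2u(1-t)})$ and the Carlitz identity are presented as essentially equivalent reformulations, and your substitution $u=z/(1-t)$ is exactly the passage between them. So the EGF route is only an independent proof if you supply a derivation of the EGF that does not itself pass through \eqref{carlitz-typeb} (e.g.\ via a recursion for $B_n(t)$ or a $P$-partition argument); otherwise you have verified an equivalence rather than proved the theorem.
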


By Remark \ref{rem:Brenti_siva}, for positive integers $n$,
we have $B_n^+(t) = \BE_n^+(t)$.  Thus, we have only 
one refinement of Theorem \ref{thm:carlitz-identity_typeB}.

\begin{theorem}
\label{thm:carlitz-identity_pm_des_typeB}
Let $B_n^{\pm}(t)$ be the signed type B Eulerian polynomial
defined in
\eqref{eqn:Eulerian_poly_Bn-pm_defn}. Then,
\begin{equation}
\label{carlitz-typeb_pm}
\frac{B_n^{\pm}(t)}{(1-t)^{n+1}} = 
\frac{\BE_n^{\pm}(t)}{(1-t)^{n+1}} = 
\sum_{k \geq 0} \frac{(2k+1)^n \pm 1}{2} t^k.
\end{equation}
\end{theorem}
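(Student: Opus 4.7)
The plan is to combine three ingredients already established in the paper: the factorization identity \eqref{eqn:factor_reln_typeB}, Brenti's type B Carlitz identity (Theorem \ref{thm:carlitz-identity_typeB}), and the signed enumeration results of Reiner and Sivasubramanian (Theorems \ref{thm:reiner-sgn-b-des} and \ref{thm:siva-sgn-b-exc}). The proof mirrors the short derivation of Theorem \ref{thm:carlitz-identity_exc_typeA} given just above in the paper, with $(1-t)^{n-1}$ replaced by $(1-t)^n$.

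Concretely, I would proceed as follows. First, appeal to Remark \ref{rem:Brenti_siva} to reduce both identities in \eqref{carlitz-typeb_pm} to a single one: since $B_n^{\pm}(t) = \BE_n^{\pm}(t)$, it suffices to prove the identity for $\BE_n^{\pm}(t)$ (or equivalently for $B_n^{\pm}(t)$). Next, invoke the factorization
\begin{equation*}
\BE_n^{\pm}(t) = \frac{1}{2}\bigl(B_n(t) \pm (1-t)^n\bigr),
\end{equation*}
already recorded in \eqref{eqn:factor_reln_typeB}, which itself rests on $\BE_n(t)=B_n(t)$ and $\sgnBE_n(t)=(1-t)^n$.

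Dividing by $(1-t)^{n+1}$ then yields
\begin{equation*}
\frac{\BE_n^{\pm}(t)}{(1-t)^{n+1}} = \frac{1}{2}\left(\frac{B_n(t)}{(1-t)^{n+1}} \pm \frac{1}{1-t}\right).
\end{equation*}
The first summand is handled by Brenti's Theorem \ref{thm:carlitz-identity_typeB}, giving $\sum_{k\geq 0}(2k+1)^n t^k$, while the second is just the geometric series $\sum_{k \geq 0} t^k$. Combining the two power series coefficient by coefficient produces the claimed expansion $\sum_{k\geq 0}\frac{(2k+1)^n \pm 1}{2}t^k$.

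There is no real obstacle here; the result is essentially a direct corollary of Brenti's Carlitz identity together with the Reiner/Sivasubramanian sign formula, repackaged through \eqref{eqn:factor_reln_typeB}. The only mild care needed is to cite Remark \ref{rem:Brenti_siva} so that the identity applies simultaneously to the descent version $B_n^{\pm}(t)$ and to the excedance version $\BE_n^{\pm}(t)$, highlighting that this unification of the two refinements is a genuinely type B phenomenon, in contrast to the type A situation where $A_n^\pm(t) \neq \AAE_n^\pm(t)$ and Theorems \ref{thm:carlitz-identity_des_typeA} and \ref{thm:carlitz-identity_exc_typeA} give distinct refinements.
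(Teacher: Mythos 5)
Your proposal is correct and follows essentially the same route as the paper: both arguments divide the decomposition $\BE_n^{\pm}(t) = \tfrac{1}{2}\bigl(B_n(t) \pm (1-t)^n\bigr)$ by $(1-t)^{n+1}$, apply Brenti's Theorem \ref{thm:carlitz-identity_typeB} to the first term and the geometric series to the second, and invoke Remark \ref{rem:Brenti_siva} to cover both $B_n^{\pm}(t)$ and $\BE_n^{\pm}(t)$ at once. There is nothing to add or correct.
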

\begin{proof}
By Theorem \ref{thm:carlitz-identity_typeB} and Theorem 
\ref{thm:siva-sgn-b-exc}, we have 
\begin{eqnarray*}
\label{eqn:powerseriesidentityBn}
\frac{\BE_n(t)}{(1-t)^{n+1}}  & = &   \sum_{k \geq 0}(2k+1)^nt^k  
\hspace{0.51 cm} \mbox{ and } \hspace{0.51 cm}
\frac{\sgnBE_n(t)}{(1-t)^{n+1}}  =  \sum_{k \geq 0}t^k. 
\hspace{0.5 cm} \mbox{Thus,}  \\
\frac{\BE_n^{\pm}(t)}{(1-t)^{n+1}}  & = & \sum_{k \geq 0}
\frac{1}{2} \Big( (2k+1)^n \pm 1 \Big)t^k.
\end{eqnarray*}
The proof is complete.
\end{proof}

\subsection{Type D Carlitz identities involving $D_n^{\pm}(t)$}
\label{subsec:typed_worp}

Brenti in \cite[Corollary 4.8]{brenti-q-eulerian-94} gave 
the following recurrence involving the polynomials $B_n(t)$,
$D_n(t)$ and $A_{n-1}(t)$.  He also mentions that Stembridge
proves it in
\cite[Lemma 9.1]{stembridge-perm-rep-weyl-grp-cohomology-toric-variety}.

\begin{lemma}[Brenti]
\label{lem:brenti_typed_relation}
For positive integers $n$, the numbers $A_{n-1,k}, B_{n,k}$ and 
$D_{n,k}$ satisfy the following relation:
\begin{equation}
\label{rec:recurrence_relation_abd}
B_{n,k}  =  D_{n,k} + n2^{n-1} A_{n-1,k-1}.
\end{equation}
Equivalently, the Eulerian polynomials of types A, B 
and D are related as follows:
\begin{equation}
\label{eqn:eulerian_relation_brenti}
B_n(t)  =  D_n(t) + n 2^{n-1} t A_{n-1}(t).
\end{equation}
\end{lemma}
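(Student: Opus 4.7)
I will prove the polynomial identity $B_n(t) = D_n(t) + n \cdot 2^{n-1} t \cdot A_{n-1}(t)$; the coefficient-wise statement \eqref{rec:recurrence_relation_abd} then follows by reading off the coefficient of $t^k$. My approach is to reduce everything to ordinary sequence descents, then apply an involution.

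For $\pi = \pi_1, \ldots, \pi_n \in \BB_n$, let $\widetilde{\des}(\pi) = |\{ i \in [n-1] : \pi_i > \pi_{i+1}\}|$ denote the number of ordinary descents of the integer sequence. Using the conventions $\pi_0 = 0$ for $\des_B$ and $\pi_0 = -\pi_2$ for $\des_D$, we can write
\[
\des_B(\pi) = \widetilde{\des}(\pi) + [\pi_1 < 0], \qquad \des_D(\pi) = \widetilde{\des}(\pi) + [\pi_1 + \pi_2 < 0],
\]
the second only for $\pi \in \DD_n$. The first step is to set up $B_n(t) = \sum_{\pi \in \BB_n} t^{\widetilde{\des}(\pi) + [\pi_1 < 0]}$ and the analogous expression for $D_n(t)$.

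The key step is to exploit the involution $\iota \colon \BB_n \to \BB_n$ defined by $\iota(\pi_1, \pi_2, \ldots, \pi_n) = (-\pi_1, \pi_2, \ldots, \pi_n)$, which interchanges $\DD_n$ and $\BB_n - \DD_n$. Under $\iota$, $\widetilde{\des}$ changes only at position $1$ (whether $\pi_1 > \pi_2$), while $[\pi_1 < 0]$ flips. By careful case analysis on the signs of $\pi_1, \pi_2$ and on $\pi \in \DD_n$ versus $\pi \in \BB_n - \DD_n$, I would isolate precisely the elements of $\BB_n$ whose contribution survives the cancellation $B_n(t) - D_n(t)$. After cancellation, the surviving terms should match the natural count of triples $(a, \epsilon, \sigma) \in [n] \times \{\pm 1\}^{n-1} \times \SSS_{n-1}$ weighted by $t^{\des(\sigma)+1}$, with the extra $t$ coming from a forced descent at position $0$.

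The main obstacle is the bookkeeping: negating $\pi_1$ changes $\widetilde{\des}$ by an amount depending on the relative magnitudes of $\pi_1$ and $\pi_2$, and the indicator $[\pi_1 + \pi_2 < 0]$ in $\des_D$ further complicates pair-wise matching. If this direct route proves cumbersome, an alternative is to combine Brenti's type B Worpitzky identity (Theorem \ref{thm:carlitz-identity_typeB}), $B_n(t)/(1-t)^{n+1} = \sum_k (2k+1)^n t^k$, with the type A Carlitz identity $A_{n-1}(t)/(1-t)^n = \sum_k (k+1)^{n-1} t^k$, yielding $n \cdot 2^{n-1} t A_{n-1}(t)/(1-t)^{n+1} = n \cdot 2^{n-1} \sum_{j \geq 1} t^j \sum_{i=1}^{j} i^{n-1}$; one then only needs to establish the companion identity $D_n(t)/(1-t)^{n+1} = \sum_k [(2k+1)^n - n \cdot 2^{n-1} \sum_{i=1}^{k} i^{n-1}] t^k$ to conclude by subtraction.
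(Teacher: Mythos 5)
First, note that the paper does not actually prove Lemma \ref{lem:brenti_typed_relation}: it is quoted from Brenti and Stembridge, so there is no in-paper argument to compare you against. Your first route is the right one, but as written the proposal stops exactly where the content of the lemma lies --- the ``careful case analysis'' that you promise but do not perform \emph{is} the proof, and your fallback does not rescue it (see below). The good news is that the analysis does close, and more cleanly than your description suggests. Write $d'(\pi)$ for the number of descents of the word $\pi_2\cdots\pi_n$, set $a=\pi_1$, $b=\pi_2$, and let $[P]$ denote the indicator of a statement $P$. Then $\des_B(\pi)=d'(\pi)+[a>b]+[a<0]$, $\des_B(\iota(\pi))=d'(\pi)+[-a>b]+[a>0]$, and $\des_D(\pi)=d'(\pi)+[a>b]+[-a>b]$ (the last since $-b>a$ iff $-a>b$). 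Because $\iota$ is a bijection from $\DD_n$ onto $\BB_n-\DD_n$ fixing $\pi_2,\ldots,\pi_n$, we get
\begin{equation*}
B_n(t)-D_n(t)=\sum_{\pi\in\DD_n}t^{d'(\pi)}\Bigl(t^{[a>b]+[a<0]}+t^{[-a>b]+[a>0]}-t^{[a>b]+[-a>b]}\Bigr),
\end{equation*}
and checking the six cases determined by the sign of $a$ and the position of $b$ relative to $\pm|a|$ shows the bracketed factor equals $t$ in every case. So, contrary to your picture, nothing ``survives a cancellation'': each $\pi\in\DD_n$ contributes a uniform $t^{d'(\pi)+1}$. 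Finally, every word $\pi_2\cdots\pi_n$ with $n-1$ distinct absolute values from $[n]$ and arbitrary signs extends to exactly one element of $\DD_n$ (the missing absolute value and the sign of $\pi_1$ are forced), and standardization preserves descents, so $\sum_{\pi\in\DD_n}t^{d'(\pi)}=n\,2^{n-1}A_{n-1}(t)$; this yields \eqref{eqn:eulerian_relation_brenti}, and \eqref{rec:recurrence_relation_abd} follows by comparing coefficients. You should carry out this computation explicitly rather than asserting it, since without it the argument is only a plan.

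Your alternative route should be dropped: the ``companion identity'' for $D_n(t)/(1-t)^{n+1}$ that it requires is precisely the type D Carlitz identity (Theorem \ref{thm:carlitz-identity_typeD}), which both Brenti and this paper \emph{deduce from} Lemma \ref{lem:brenti_typed_relation}. Invoking it here is circular unless you supply an independent proof, which would be no easier than the lemma itself.
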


Using Lemma \ref{lem:brenti_typed_relation}, Brenti in 
\cite[Theorem 4.10]{brenti-q-eulerian-94} gave the following 
type D analogue of the Carlitz identity.
Let $\BerB_n(x)$ be the $n$-th Bernoulli polynomial.

\begin{theorem}[Brenti]
\label{thm:carlitz-identity_typeD}
For positive integers $n \geq 2$,
\begin{equation}
\label{carlitz-typed}
\frac{D_n(t)}{(1-t)^{n+1}} = 
\sum_{k \geq 0} \Big( \big(2k+1\big)^n 
-2^{n-1}\big( \BerB_n(k+1) - \BerB_n(k) \big) \Big)t^k.
\end{equation}
\end{theorem}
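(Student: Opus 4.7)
The plan is to derive the identity directly from the recurrence in Lemma \ref{lem:brenti_typed_relation} together with the two Carlitz identities already at our disposal, namely Theorem \ref{thm:carlitz-identity} for $A_n(t)$ and Theorem \ref{thm:carlitz-identity_typeB} for $B_n(t)$. Solving the recurrence for $D_n(t)$ gives $D_n(t) = B_n(t) - n 2^{n-1} t A_{n-1}(t)$, and dividing by $(1-t)^{n+1}$ yields
$$\frac{D_n(t)}{(1-t)^{n+1}} = \frac{B_n(t)}{(1-t)^{n+1}} - n 2^{n-1} \cdot \frac{t}{1-t} \cdot \frac{A_{n-1}(t)}{(1-t)^{n}}.$$

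Theorem \ref{thm:carlitz-identity_typeB} immediately rewrites the first summand as $\sum_{k \geq 0} (2k+1)^n t^k$. Applying Theorem \ref{thm:carlitz-identity} with $n-1$ in place of $n$ gives $A_{n-1}(t)/(1-t)^n = \sum_{k \geq 0} (k+1)^{n-1} t^k$. Multiplying this by $t/(1-t) = \sum_{m \geq 1} t^m$ and gathering coefficients produces the Cauchy product
$$\frac{t}{1-t} \cdot \frac{A_{n-1}(t)}{(1-t)^{n}} = \sum_{k \geq 1}\Bigl( \sum_{\ell=1}^{k} \ell^{n-1} \Bigr) t^k,$$
so that the coefficient of $t^k$ in $D_n(t)/(1-t)^{n+1}$ equals $(2k+1)^n - n 2^{n-1} \sum_{\ell=1}^{k}\ell^{n-1}$ for $k \geq 1$, and $1$ when $k=0$.

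The final step is to rewrite the power sum $\sum_{\ell=1}^{k}\ell^{n-1}$ as a difference of Bernoulli polynomials using Faulhaber's formula. The defining telescoping property $\BerB_n(x+1) - \BerB_n(x) = n x^{n-1}$ gives $n \sum_{\ell=1}^{k}\ell^{n-1} = \BerB_n(k+1) - \BerB_n(1)$ for $n \geq 2$ (here $0^{n-1}=0$ is used to suppress the $\ell=0$ term, and $\BerB_n(1) = \BerB_n(0)$ since the difference is $n \cdot 0^{n-1} = 0$). Substituting this expression rewrites the second summand in exactly the Bernoulli form claimed in the statement, completing the identification.

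No serious obstacle arises: the derivation is a direct coefficient comparison once the recurrence of Lemma \ref{lem:brenti_typed_relation} is combined with the two Carlitz identities. The only step that demands genuine care is the bookkeeping in the Faulhaber conversion—in particular, making sure that the correct difference of Bernoulli polynomials (and the correct base point, where the hypothesis $n \geq 2$ is used) appears in the final expression.
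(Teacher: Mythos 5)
Your derivation is the natural one and, up to its last line, it is correct; note that the paper itself gives no proof of this theorem (it is quoted from Brenti, with the remark that it is obtained using Lemma \ref{lem:brenti_typed_relation}), and the route you take is exactly the one alluded to there. The problem is the final identification. What your computation produces for the coefficient of $t^k$ is
$$(2k+1)^n - 2^{n-1}\bigl(\BerB_n(k+1)-\BerB_n(1)\bigr) \;=\; (2k+1)^n - n2^{n-1}\sum_{\ell=1}^{k}\ell^{n-1},$$
whereas the statement you are asked to prove contains $\BerB_n(k+1)-\BerB_n(k)$, which by the very telescoping identity you invoke equals $nk^{n-1}$, not $n\sum_{\ell=1}^{k}\ell^{n-1}$. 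These agree only for $k\le 1$, so the sentence ``substituting this expression rewrites the second summand in exactly the Bernoulli form claimed in the statement'' is false as written: you have silently replaced $\BerB_n(1)$ by $\BerB_n(k)$.

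Concretely, take $n=2$: here $D_2(t)=(1+t)^2$, so the coefficient of $t^2$ in $D_2(t)/(1-t)^3$ is $13$, which matches your formula $25-2\cdot 2\cdot(1+2)=13$ but not the printed right-hand side $25-2\bigl(\BerB_2(3)-\BerB_2(2)\bigr)=25-2\cdot 4=17$. So what you have actually proved is the (correct) identity with $\BerB_n(k+1)-\BerB_n(0)$, i.e.\ with the Bernoulli number in place of $\BerB_n(k)$; the displayed statement \eqref{carlitz-typed} appears to carry a transcription error. You should either flag the discrepancy explicitly and prove the corrected statement, or, if you believe the statement as printed, locate where your (correct) telescoping argument would have to fail. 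As it stands, the last step of your write-up asserts an equality between two expressions that are not equal.
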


Our refinement involving $D_n^{\pm}(t)$ 
is the following.  The proof
uses Theorem \ref{thm:carlitz-identity_typeD} and 
Theorem \ref{thm:reiner-sgn-d-des} in a manner very
similar to the proof of Theorem \ref{thm:carlitz-identity_pm_des_typeB}.
Thus, we omit its proof and merely state out result.

\begin{theorem}
\label{thm:carlitz-identity_pm_des_typeD}
Let $D_n^{\pm}(t)$ be the signed type D Eulerian polynomial
defined in \eqref{eqn:defn_dtype_pm_des_eul}.  Then, for 
positive integers $n \geq 2$,

\begin{equation}
\label{carlitz-typed_pm}
\frac{D_n^{\pm}(t)}{(1-t)^{n+1}} = 
\begin{cases}
\sum_{k \geq 0} \frac{1}{2} \Big( \big(2k+1\big)^n 
-2^{n-1}\big( \BerB_n(k+1) - \BerB_n(k) \big)  \pm 1 \Big)t^k 
& \text {if $n$ is even, }\\
\sum_{k \geq 0} \frac{1}{2}\Big( \big(2k+1\big)^n 
-2^{n-1}\big( \BerB_n(k+1) - \BerB_n(k) \big) \pm (2k+1)\Big)t^k 
& \text {if $n$ is odd. }
\end{cases}
\end{equation}
\end{theorem}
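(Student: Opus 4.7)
The plan is to mimic the approach used for Theorem \ref{thm:carlitz-identity_pm_des_typeB}, but now with the signed descent enumeration over $\DD_n$ from Theorem \ref{thm:reiner-sgn-d-des} in place of the type B version, and with Brenti's type D Carlitz identity (Theorem \ref{thm:carlitz-identity_typeD}) replacing the type B one. The starting point is the elementary identity
\begin{equation*}
D_n^{\pm}(t) \;=\; \frac{1}{2}\bigl(D_n(t)\pm \SDD_n(t)\bigr),
\end{equation*}
which I would simply divide through by $(1-t)^{n+1}$.

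The first summand becomes, by Theorem \ref{thm:carlitz-identity_typeD},
\begin{equation*}
\frac{D_n(t)}{(1-t)^{n+1}}=\sum_{k\geq 0}\Bigl((2k+1)^n-2^{n-1}\bigl(\BerB_n(k+1)-\BerB_n(k)\bigr)\Bigr)t^k .
\end{equation*}
The second summand splits into two cases according to the parity of $n$, using Theorem \ref{thm:reiner-sgn-d-des}. When $n$ is even, $\SDD_n(t)=(1-t)^n$, so
\begin{equation*}
\frac{\SDD_n(t)}{(1-t)^{n+1}}=\frac{1}{1-t}=\sum_{k\geq 0}t^k,
\end{equation*}
yielding the $\pm 1$ contribution. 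When $n$ is odd, $\SDD_n(t)=(1+t)(1-t)^{n-1}$, so
\begin{equation*}
\frac{\SDD_n(t)}{(1-t)^{n+1}}=\frac{1+t}{(1-t)^2}.
\end{equation*}
A short expansion $(1-t)^{-2}=\sum_{k\geq 0}(k+1)t^k$ gives $(1+t)/(1-t)^2=\sum_{k\geq 0}(2k+1)t^k$, yielding the $\pm(2k+1)$ contribution.

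Adding the two pieces with coefficient $1/2$ gives exactly the two cases in the statement of Theorem \ref{thm:carlitz-identity_pm_des_typeD}. There is really no obstacle here: the heavy lifting has been done by Brenti's formula for $D_n(t)/(1-t)^{n+1}$ and by Reiner's signed descent enumeration, so the proof reduces to two elementary power series expansions and a case split on the parity of $n$. The only minor care needed is to verify that $\SDD_n(t)/(1-t)^{n+1}$ is genuinely a power series (i.e.\ the factor $(1-t)^n$ or $(1+t)(1-t)^{n-1}$ cancels enough of the denominator), which it does since $n\geq 2$.
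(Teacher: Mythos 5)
Your proposal is correct and is exactly the argument the paper has in mind: the authors explicitly omit the proof, saying only that it ``uses Theorem \ref{thm:carlitz-identity_typeD} and Theorem \ref{thm:reiner-sgn-d-des} in a manner very similar to the proof of Theorem \ref{thm:carlitz-identity_pm_des_typeB},'' which is precisely your decomposition $D_n^{\pm}(t)=\tfrac{1}{2}(D_n(t)\pm\SDD_n(t))$ followed by the two parity-dependent power series expansions. Your expansion $(1+t)/(1-t)^2=\sum_{k\geq 0}(2k+1)t^k$ and the even case $1/(1-t)=\sum_{k\geq 0}t^k$ both check out, so nothing is missing.
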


Lemma \ref{lem:brenti_typed_relation} and Theorem 
\ref{thm:carlitz-identity_typeD} are intimately connected.
Similar to their relation, we get the following Lemma
from Theorem \ref{thm:carlitz-identity_pm_des_typeD}.
It is clear Lemma \ref{lem:stembridge-refine} refines 
Lemma \ref{lem:brenti_typed_relation}.

\begin{lemma}
\label{lem:stembridge-refine}
For positive integers $n$, the Eulerian numbers $A_{n-1,k}, 
B_{n,k}^{\pm}$ and $D_{n,k}^{\pm}$ satisfy the following relation:
\begin{equation}
\label{rec:recurrence_relation_abd_pm}
B_{n,k}^{\pm}  =  
\begin{cases}
D_{n,k}^{\pm} + \frac{1}{2} n2^{n-1} A_{n-1,k-1} & \text {if $n$ is even, }\\
D_{n,k}^{\pm} + \frac{1}{2} n2^{n-1} A_{n-1,k-1} \mp (-1)^{k-1} \binom{n-1}{k-1} & \text {if $n$ is odd.}
\end{cases}
\end{equation}
Equivalently, the signed Eulerian polnomials of types A, B 
and D are related as follows:
\begin{equation}
\label{eqn:eulerian_relation_brenti_pm}
B_n^{\pm}(t)  =  
\begin{cases}
D_n^{\pm}(t) + \frac{1}{2}n 2^{n-1} t A_{n-1}(t), & \text {if $n$ is even, }\\
D_n^{\pm}(t) + \frac{1}{2}n 2^{n-1} t A_{n-1}(t) \mp t (1-t)^{n-1}, & \text {if $n$ is odd.}
\end{cases}
\end{equation}
\end{lemma}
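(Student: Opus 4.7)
The plan is to derive this lemma as a straightforward algebraic consequence of three ingredients already in hand: Brenti's recurrence in Lemma \ref{lem:brenti_typed_relation}, Reiner's type B signed descent enumeration in Theorem \ref{thm:reiner-sgn-b-des} (giving $\SB_n(t) = (1-t)^n$), and Reiner's type D signed descent enumeration in Theorem \ref{thm:reiner-sgn-d-des} (giving $\SDD_n(t) = (1-t)^n$ for even $n$ and $(1+t)(1-t)^{n-1}$ for odd $n$). The starting observation is that, from the definitions of $B_n^\pm(t)$ and $D_n^\pm(t)$, one has $B_n^\pm(t) = \frac{1}{2}(B_n(t) \pm \SB_n(t))$ and $D_n^\pm(t) = \frac{1}{2}(D_n(t) \pm \SDD_n(t))$.

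First I would compute the difference $B_n^\pm(t) - D_n^\pm(t) = \frac{1}{2}(B_n(t) - D_n(t)) \pm \frac{1}{2}(\SB_n(t) - \SDD_n(t))$ and apply \eqref{eqn:eulerian_relation_brenti} to rewrite $B_n(t) - D_n(t) = n 2^{n-1} t A_{n-1}(t)$. The first term then becomes $\frac{1}{2} n 2^{n-1} t A_{n-1}(t)$, which is the expected main contribution in both parities.

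Next I would evaluate $\SB_n(t) - \SDD_n(t)$ by cases. When $n$ is even the two signed descent polynomials coincide, so the correction term vanishes and \eqref{eqn:eulerian_relation_brenti_pm} follows. When $n$ is odd, the difference is $(1-t)^n - (1+t)(1-t)^{n-1} = (1-t)^{n-1}\bigl((1-t) - (1+t)\bigr) = -2t(1-t)^{n-1}$, so the $\pm \frac{1}{2}(\SB_n(t) - \SDD_n(t))$ term contributes exactly $\mp t(1-t)^{n-1}$, which matches the stated identity.

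Finally, to pass from the polynomial identity \eqref{eqn:eulerian_relation_brenti_pm} to the coefficient identity \eqref{rec:recurrence_relation_abd_pm}, I would expand $t(1-t)^{n-1} = \sum_{j \geq 0} (-1)^j \binom{n-1}{j} t^{j+1}$ and read off the coefficient of $t^k$ as $(-1)^{k-1}\binom{n-1}{k-1}$. I do not foresee any real obstacle here; the only thing requiring care is keeping the $\pm$ and $\mp$ signs consistent across the two sides in the odd case, which is why I separate the even and odd parities at the very start rather than trying to unify them.
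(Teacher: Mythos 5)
Your proposal is correct and follows essentially the same route as the paper's own (sketched) proof: both start from $B_n^{\pm}(t) = \frac{1}{2}(B_n(t) \pm \SB_n(t))$ and $D_n^{\pm}(t) = \frac{1}{2}(D_n(t) \pm \SDD_n(t))$ and combine these with Theorems \ref{thm:reiner-sgn-b-des} and \ref{thm:reiner-sgn-d-des} and the recurrence \eqref{eqn:eulerian_relation_brenti}. Your write-up simply carries out explicitly the parity case analysis and the coefficient extraction that the paper leaves to the reader.
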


\begin{proof}(Sketch)
Since \eqref{rec:recurrence_relation_abd_pm} and
\eqref{eqn:eulerian_relation_brenti_pm} are equivalent, we only sketch
the proof of \eqref{rec:recurrence_relation_abd_pm}.  Clearly,
$B_n^{\pm}(t) = \frac{1}{2} \left( B_n(t) \pm \SB_n(t) \right)$
and
$D_n^{\pm}(t) = \frac{1}{2} \left( D_n(t) \pm \SDD_n(t) \right)$.
The proof follows by combining the above two with 
Theorem \ref{thm:reiner-sgn-b-des}, 
Theorem \ref{thm:reiner-sgn-d-des} and
\eqref{eqn:eulerian_relation_brenti}. 
\end{proof}

\subsection{Identities involving 
$\BDes^{D,\pm}_n(t)$ and $\BDes^{B-D,\pm}_n(t)$}
We wish to prove similar identities involving $\BDes^{D,\pm}_n(t)$.
Combining Theorem \ref{thm:boroweic-mlot-eqns24-25} with Theorem 
\ref{thm:main_theorem_signed_desB_enumerate_over_type_d}, we get
the following Carlitz type result.  

\begin{theorem}
\label{thm:carlitz-identity_pm_boroweic_typeD}
For positive integers $n$, 
\begin{equation}
\frac{\BDes_n^{D,\pm}(t)}{(1-t)^{n+1}}   = 
\begin{cases}
\sum_{k \geq 0} 
\frac{1}{2} \Big( \frac{1}{2}\big( (2k+1)^n+1 \big) \pm 1 \Big)t^k 
& \text {if $n$ is even, }\\
\sum_{k \geq 0} 
\frac{1}{2} \Big( \frac{1}{2}\big( (2k+1)^n+1 \big) \pm (k+1) \Big)t^k 
& \text {if $n$ is odd. }
\end{cases} 
\end{equation}

\begin{equation}
\frac{\BDes_n^{B-D,\pm}(t)}{(1-t)^{n+1}}   = 
\begin{cases}
\sum_{k \geq 0} 
\frac{1}{2} \Big( \frac{1}{2}\big( (2k+1)^n-1 \big) \Big)t^k 
& \text {if $n$ is even, }\\
\sum_{k \geq 0} 
\frac{1}{2} \Big( \frac{1}{2}\big( (2k+1)^n-1 \big) \pm k \Big)t^k 
& \text {if $n$ is odd. }
\end{cases} 
\end{equation}
\end{theorem}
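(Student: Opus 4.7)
The plan is to reduce the four asserted identities to a routine substitution into already-established formulas in the paper, with no new combinatorial input required. The starting observation is that, since $\DD_n^{\pm}$ and $(\BB_n-\DD_n)^{\pm}$ are defined as the even/odd parts with respect to $\inv_D$, we have
\begin{equation*}
\BDes_n^{D,\pm}(t) = \tfrac{1}{2}\bigl(\BDes_n^D(t) \pm \SgnBDes^D_n(1,t)\bigr),
\quad
\BDes_n^{B-D,\pm}(t) = \tfrac{1}{2}\bigl(\BDes_n^{B-D}(t) \pm \SgnBDes^{B-D}_n(1,t)\bigr).
\end{equation*}

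Next, I would substitute the closed forms already available: Theorem \ref{thm:boroweic-mlot-eqns24-25} gives $\BDes_n^D(t) = \tfrac{1}{2}(B_n(t)+(1-t)^n)$ and $\BDes_n^{B-D}(t) = \tfrac{1}{2}(B_n(t)-(1-t)^n)$, while setting $s=1$ in \eqref{eqn:first_imp} and \eqref{eqn:second_imp} of Theorem \ref{thm:main_theorem_signed_desB_enumerate_over_type_d} gives $\SgnBDes^D_n(1,t)$ equal to $(1-t)^n$ or $(1-t)^{n-1}$ according to the parity of $n$, and $\SgnBDes^{B-D}_n(1,t)$ equal to $0$ or $t(1-t)^{n-1}$. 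After these substitutions, both $\BDes_n^{D,\pm}(t)$ and $\BDes_n^{B-D,\pm}(t)$ are displayed as explicit $\mathbb{Q}$-linear combinations of $B_n(t)$, $(1-t)^n$, $(1-t)^{n-1}$ and $t(1-t)^{n-1}$.

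The final step is to divide through by $(1-t)^{n+1}$ and expand each summand as a power series. Here I would use the type B Carlitz identity of Theorem \ref{thm:carlitz-identity_typeB}, namely $B_n(t)/(1-t)^{n+1} = \sum_{k\ge 0}(2k+1)^n t^k$, together with the elementary expansions
\begin{equation*}
\frac{(1-t)^n}{(1-t)^{n+1}} = \sum_{k\ge 0} t^k,
\qquad
\frac{(1-t)^{n-1}}{(1-t)^{n+1}} = \sum_{k\ge 0}(k+1)t^k,
\qquad
\frac{t(1-t)^{n-1}}{(1-t)^{n+1}} = \sum_{k\ge 0} k\,t^k.
\end{equation*}
Reading off the coefficient of $t^k$ in each of the four cases (two groups $\DD_n$ and $\BB_n-\DD_n$, crossed with the two parities of $n$) yields the formulas stated in Theorem \ref{thm:carlitz-identity_pm_boroweic_typeD}.

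There is no real obstacle: the work is entirely a bookkeeping exercise. The only thing to be careful about is tracking the two factors of $\tfrac{1}{2}$ (one from the $\pm$-decomposition and one already present in the Borowiec--M\l{}otkowski formulas) and correctly matching the two parity cases from Theorem \ref{thm:main_theorem_signed_desB_enumerate_over_type_d} with the appropriate elementary series $\sum t^k$, $\sum(k+1)t^k$ or $\sum k\,t^k$. Given how little is happening beyond these substitutions, it seems appropriate in the write-up to omit the parallel calculation for $\BDes_n^{B-D,\pm}(t)$ and present only the $\BDes_n^{D,\pm}(t)$ case in detail.
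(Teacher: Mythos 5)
Your proposal is correct and follows essentially the same route as the paper: the paper also starts from the decomposition $\BDes_n^{D,\pm}(t)=\tfrac{1}{2}\bigl(\BDes_n^D(t)\pm\SgnBDes^D_n(1,t)\bigr)$ (recorded as \eqref{eqn:Bdes_pm_expression} and \eqref{eqn:B-Ddes_pm_expression}), substitutes the Borowiec--M\l{}otkowski formula and Theorem \ref{thm:main_theorem_signed_desB_enumerate_over_type_d} at $s=1$, and then expands $B_n^{\pm}(t)/(1-t)^{n+1}$, $(1-t)^n/(1-t)^{n+1}$ and $(1-t)^{n-1}/(1-t)^{n+1}$ as power series. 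The only cosmetic difference is that you carry $B_n(t)$ and the type B Carlitz identity directly, whereas the paper routes through $B_n^{+}(t)$ and Theorem \ref{thm:carlitz-identity_pm_des_typeB}; these are equivalent by Reiner's theorem.
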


\begin{proof}
We clearly have 
\begin{eqnarray*}
\frac{B_n^{\pm}(t)}{(1-t)^{n+1}}  & = &   \sum_{k \geq 0}\frac{1}{2}\Big(
(2k+1)^n \pm 1 \Big)t^k \\
\frac{(1-t)^{n-1}}{(1-t)^{n+1}}  & =  & \sum_{k \geq 0}(k+1)t^k  
\hspace{5 mm}  \mbox{ and } \hspace{5 mm}
\frac{(1-t)^n}{(1-t)^{n+1}}   =  \sum_{k \geq 0} t^k
\end{eqnarray*}

Combining the above three with \eqref{eqn:Bdes_pm_expression} and 
\eqref{eqn:B-Ddes_pm_expression} completes the proof.
\end{proof}

Theorem \ref{thm:carlitz-identity_pm_boroweic_typeD} clearly 
refines the Worpitzky type identity 
\cite[Proposition 4.6]{boroweic-mlotkowski-new-eulerian-type-d}
proved by Borowiec and M\l{}otkowski.

\comment{
\subsubsection{Identities involving $\BE^{B-D,\pm}_n(t)$}
We again merely state  our final result and omit its proof.

\begin{theorem}
\label{thm:carlitz-identity_pm_boroweic_typeD_excB}
For positive integers $n$, 
\begin{equation}
\frac{\BE_n^{B-D,\pm}(t)}{(1-t)^{n+1}}   = 
\begin{cases}
\sum_{k \geq 0} 
\frac{1}{2} \Big( \frac{1}{2}\big[ (2k+1)^n-1 \big] \Big)t^k 
& \text {if $n$ is even, }\\
\sum_{k \geq 0} 
\frac{1}{2} \Big( \frac{1}{2}\big[ (2k+1)^n-1 \big] \mp k^2 \Big)t^k 
& \text {if $n$ is odd. }
\end{cases} 
\end{equation}
\end{theorem}
}

\vspace{2 mm}

In $\SSS_n$ and $\BB_n$, bivariate versions of Carlitz's identity are 
known with respect to the major index and descent statistics 
see 
\cite{carlitz-amm-maj-des-carlitz-identity,
adin-brenti-roichman,chow-gessel-des-major-hyperoctahedral-group}.  
It would be interesting to see if any counterparts can
be given for $\SSS_n^{\pm}$ and $\BB_n^{\pm}$.

The preprint \cite{worpitzk-identity-signed-even-signed} by Bagno, Garber
and Novick gives combinatorial proofs of Worpitzky's identities, which 
are a rephrasement  of Carlitz's identities.  It is easy to recast all
Carlitz type results of this Section as Worpitzky identities.  
On the lines of \cite{worpitzk-identity-signed-even-signed}, it would be 
interesting to see if combinatorial proofs of our Worpitzky's
identities can be given.

\section*{Acknowledgement}
The first author acknowledges support from a CSIR-SPM
Fellowship.

The second author thanks Professor Alladi Subramanyam for illuminating
discussions on CLTs.  He acknowledges support from project grant 
P07 IR052, given by IRCC, IIT Bombay and from project
SERB/F/252/2019-2020 given by the Science and Engineering 
Research Board (SERB), India.

\bibliographystyle{acm}
\bibliography{main}
\end{document}